\newtheorem{thm}[equation]{Theorem}
\newtheorem{lem}[equation]{Lemma}
\newtheorem{cor}[equation]{Corollary}
\newtheorem{prop}[equation]{Proposition}
\newtheorem*{thm*}{Theorem}
\newtheorem*{prop*}{Proposition}
\newtheorem*{cor*}{Corollary}
\newtheorem*{lem*}{Lemma}
\newtheorem*{MT*}{Main Theorem}
\newtheorem*{ques*}{Question}
\theoremstyle{definition} %
\newtheorem{defn}[equation]{Definition}
\newtheorem*{defn*}{Definition}
\newtheorem{eg}[equation]{Example}
\theoremstyle{remark} %
\newtheorem{rmk}[equation]{Remark}
\newtheorem*{rmk*}{Remark}
\newtheorem*{rmks*}{Remarks}
\newcommand{\ot}{\otimes}
\renewcommand{\P}{\mathbb{P}}
\newcommand{\darkrad}{0.115}
\DeclareRobustCommand{\upstrut}[1]{\rule{0mm}{#1 ex}}
\DeclareMathOperator{\Lie}{Lie}
\newcommand{\C}{{\mathbb{C}}}        
\newcommand{\Q}{{\mathbb{Q}}}        
\newcommand{\R}{{\mathbb{R}}}        
\newcommand{\Z}{{\mathbb{Z}}}        
\newcommand{\Zm}[1]{\Z/{#1}\Z}
\newcommand{\la}{\lambda}
\newcommand{\D}{\Delta}
\newcommand{\aff}{{\mathbb{A}}}      
\newcommand{\oddots}{{\mathinner{\mkern1mu\raise1pt\vbox{\kern7pt\hbox{.}}\mkern2mu\raise4pt\hbox{.}\mkern2mu\raise7pt\hbox{.}\mkern1mu}}}
\newcommand{\s}{\sigma}
\newcommand{\ksep}{k_{{\mathrm{sep}}}}
\newcommand{\kx}{k^\times}
\newcommand{\qform}[1]{{\left\langle{#1}\right\rangle}}                   
\DeclareMathOperator{\Spin}{Spin}           
\newcommand{\Sp}{\mathrm{Sp}}
\DeclareMathOperator{\SL}{SL}
\DeclareMathOperator{\GL}{GL}
\DeclareMathOperator{\HSpin}{HSpin}
\newcommand{\SO}{\mathrm{SO}}
\DeclareMathOperator{\Ad}{Ad}
\newcommand{\Gm}{\mathbb{G}_m}
\DeclareMathOperator{\Stab}{Stab}
\DeclareMathOperator{\Spec}{Spec}
\DeclareMathOperator{\Gal}{Gal}
\DeclareMathOperator{\im}{im}
\DeclareMathOperator{\Int}{Int}
\DeclareMathOperator{\chr}{char}
\DeclareMathOperator{\car}{char}
\DeclareMathOperator{\Id}{Id}
\DeclareMathOperator{\End}{End}
\DeclareMathOperator{\aut}{Aut}
\DeclareMathOperator{\Aut}{Aut}
\newcommand{\Hom}{{\mathrm{Hom}}}
\newcommand{\ra}{\rightarrow}
\newcommand{\E}{\mathrm{E}}
\DeclareMathOperator{\Pf}{Pf}
\DeclareMathOperator{\GSp}{GSp}
\renewcommand{\sc}{{\mathrm{sc}}}
\renewcommand{\O}{\mathscr{O}}
\renewcommand{\k}{K}
\renewcommand{\kx}{\k^\times}
\newcommand{\kalg}{\k_{{\mathrm{alg}}}}
\renewcommand{\ksep}{\k_{{\mathrm{sep}}}}
\newcommand{\Xb}{\bar{X}}
\newcommand{\vp}{v^+}
\newcommand{\g}{\gamma}
\DeclareMathOperator{\Skew}{Skew}
\DeclareMathOperator{\Symm}{Symm}
\newcommand{\Sym}{\Symm}
\DeclareMathOperator{\adj}{adj}
\DeclareMathOperator{\GO}{GO}
\newcommand{\Gb}{\overline{G}}
\newcommand{\Gt}{\widetilde{G}}
\newcommand{\Lt}{\widetilde{L}}
\newcommand{\rt}{\widetilde{\rho}}
\newcommand{\Zt}{\widetilde{Z}}
\numberwithin{equation}{section}
\begin{document}

\title[Linear preservers and representations]{Linear preservers and representations with a 1-dimensional ring of invariants}

\begin{abstract}
We determine the group of  linear transformations on a vector space $V$ that preserve a polynomial function $f$ on $V$ for several interesting pairs $(V,f)$, using the theory of semisimple algebraic groups.
\end{abstract}

\subjclass[2010]{47B49 (15A04, 15A72, 20G15)}

\author{H. Bermudez}
\author{S. Garibaldi}
\author{V. Larsen}
\address{Department of Mathematics and Computer Science, MSC W401, 400 Dowman Dr., Emory University, Atlanta, GA 30322, USA}
\thanks{The research for this article was partially supported by  NSA grant no.~H98230-11-1-0178.}


\maketitle

In an 1897 paper \cite{Frobenius}, Frobenius proved that every linear transformation of the $n$-by-$n$ real
matrices that preserves the determinant is of the form
\[
X \mapsto AXB \quad \text{or} \quad X \mapsto AX^t B
\]
for some $A, B \in \GL_n(\R)$ such that $\det(AB) = 1$.  This is the basic example of a  \emph{linear preserver problem} (LPP): one is given a finite-dimensional vector space $V$ over a field $K$ and a polynomial function $f \!: V \ra K$ and one wants to determine the linear transformations of $V$ that preserve $f$.  Since Frobenius, many  such problems have been solved, see for example the surveys \cite{LiPierce:AMM}, \cite{Pierce:survey},  \cite{LiTsing}, and \cite{Marcus:AMMsurv}.  We develop here a general method that solves several new problems, see  Examples \ref{spinors} and \ref{nilpotents} and Corollaries \ref{e6.adj}, \ref{cubics}, \ref{Sp6}, \ref{SL6},  \ref{HSpin12}, and \ref{blackholes}.

Our method is to introduce an auxiliary group $G \subset \GL(V)$ that is semisimple and such that $V$ is an irreducible representation or Weyl module of $G$.  In section \ref{norm.sec}, we determine the normalizer $N_{\GL(V)}(G)$ of $G$ in $\GL(V)$.  We prove in Theorem \ref{rk1} that this subgroup equals the stabilizer of a closed $G$-orbit $\O$ in the projective space $\P(V)$, i.e., $\Stab_{\GL(V)}(\O) = N_{\GL(V)}(G)$.  Using this result, in sections \ref{RRS.sec} and \ref{FTS.sec}, we solve two families of LPPs by reducing the problems in each family to determining this stabilizer.  These two families consist of representations $V$ with a 1-dimensional ring of $G$-invariant functions generated by $f$ and are examples of prehomogenous vector spaces of parabolic type; the two families correspond to the cases where the unipotent radical $U$ of the parabolic subgroup is abelian (i.e., $[U,U] = 0$) or $[U,U]$ is 1-dimensional respectively, and we use the general results on representations in these families from \cite{RRS}, \cite{Roe:extra}, and \cite{Helenius}.

Besides obtaining new results, we also recover many known solutions to linear preserver problems.  The generality of our method is in contrast to many of the proofs in the literature, which typically are highly dependent on the particular choice of $V$ and $f$.  (The arguments in \cite{PlDj} and \cite{Guralnick:LPP2} are notable exceptions.)  Further, we require only very weak assumptions about the field $K$ (at most we require that the characteristic is $\ne 2, 3$) and determine the preserver precisely (and not just its identity component or Lie algebra).

\subsection*{Applications of solutions to LPPs} Linear preserver problems arise naturally in algebra, sometimes in non-obvious ways.  For example, every associative division algebra $D$ that is finite-dimensional over its center $\k$ has a ``generic characteristic polynomial" generalizing the notion of characteristic polynomial on $n$-by-$n$ matrices.  Its coefficients are polynomial functions $E_r \!: D \ra \k$ for $1 \le r \le \sqrt{\dim_\k D}$ where $E_r$ has degree $r$.  By determining the preserver of $E_r$, Waterhouse proved that $D$ is determined up to isomorphism or anti-isomorphism by $E_r$ for any $r \ge 3$, see \cite{Wa:linpres} or \cite[Cor.~4]{Wa:basic}.  This in turn gives a result on the essential dimension of central simple algebras, see \cite{Florence:higher}.

\subsection*{Notation}
An \emph{affine group scheme} $G$ over a field $\k$ is a representable functor from commutative $\k$-algebras to the category of groups, i.e., it is given by $S \mapsto \Hom_{\text{$\k$-alg.}}(\k[G], S)$ for some $\k$-algebra $\k[G]$ and every $\k$-algebra $S$, see \cite{Wa}.  We are mostly concerned with the case where $G$ is a (linear) \emph{algebraic group}, i.e., where $\k[G]$ is smooth of finite type over $\k$.  The ``concrete" group $G(S)$ is  called the \emph{group of $S$-points}.  Below, we maintain the distinction between $G$ and $G(S)$ except for the group $\GL(V)$ of linear transformations on a finite-dimensional $\k$-vector space $V$; for that group context will show whether the group scheme or the collection of linear transformations on $V$ (``$\GL(V)(\k)$") is meant.

For (Zariski) closed subgroups $G$, $N$ of $\GL(V)$, we write $G.N$ for the compositum of $G$ and $N$, i.e., for the smallest closed subgroup of $\GL(V)$ containing $G$ and $N$.  If $N$ normalizes $G$ and $\k$ is algebraically closed, $(G.N)(\k) = G(\k)\cdot N(\k)$.

Throughout we use notions from the theory of semisimple groups as in \cite{SGA3}, \cite{Borel}, \cite{Hum:LAG}, \cite{Hum:Lie}, or \cite{Bou:g7}, such as roots and weights.  

\section{Irreducible representations and the closed orbit}  \label{orbit.sec}

We now describe the basic setup that will be used throughout the paper, providing details and examples for the convenience of the reader who is a non-specialist in semisimple groups.

Let $\Gt$ be a split semisimple linear algebraic group over a field $K$ and fix a representation $\rho \!: \Gt \ra \GL(V)$;  Table \ref{1dim} below lists some examples of pairs $(\Gt, V)$ that we will consider.
For notational simplicity, we focus on the image $G$ of $\Gt$ in $\GL(V)$.  This group is also split semisimple.  We will assume that $V$ is an irreducible representation or is a Weyl module in the sense of \cite[p.~183]{Jantzen}.  (If $\car \k = 0$, the two notions coincide.)  In either case, $\End_G(V) = \k$, see loc.\ cit.\ when $V$ is a Weyl module.

Fix a pinning of $G$ in the sense of \cite[\S{XXIII.1}]{SGA3} (called a ``framing" in \cite{Bou:g7}); this includes choosing a split maximal $K$-torus $T$, a set of simple roots $\D$ of $G$ with respect to $T$, and a corresponding Borel subgroup $B$.  Recall that $T^*$ is naturally included in the weight lattice and there are bijections between dominant weights in $T^*$, equivalence classes of irreducible representations of $G$, and equivalence classes of Weyl modules of $G$ \cite[II.2.4]{Jantzen}.
Put $\la \in T^*$ for the highest weight of $V$ and $\vp$ for a highest weight vector in $V$. 

The stabilizer of $\k\vp$ in $G$ contains the Borel subgroup $B$, so it is a parabolic subgroup $P$.  The orbit $\O$ of $\k \vp$ is identified with the projective variety $G/P$, so $\O$ is \emph{closed} in $\P(V)$.

\begin{defn} 
We call an element $x \in V$ is \emph{minimal} if $\k x$ belongs to $\O$.  
\end{defn}

\begin{eg}[exterior powers] \label{basic.wedge}
Take $\Gt = \SL_n$ and $V = \wedge^d (K^n)$ for some $d$ between 1 and $n$.  The group acts via $\rho(g)(v_1 \wedge v_2 \wedge \cdots \wedge v_d) = gv_1 \wedge gv_2 \wedge \cdots \wedge gv_d$ for $v_1, \ldots, v_d \in K^n$.  The image $G$ of $\Gt$ is equal to $\SL_n/\mu_e$ where $\mu_e$ is group scheme of $e$-th roots of unity for $e := \gcd(d, n)$.  For $T$ and $B$, we take the image in $G$ of the diagonal and upper-triangular matrices, respectively.
The only line stabilized by $B$ is the span of $\vp = e_1 \wedge \cdots \wedge e_d$, where $e_i$ denotes the element of $K^n$ with a 1 in the $i$-th position and zeros elsewhere.

The group $\Gt$ is of type $A_{n-1}$ and its Dynkin diagram $\D$ is 
\[
\begin{picture}(6,0.7)
\multiput(0.2,0.2)(1,0){5}{\circle*{\darkrad}}
\put(0.2,0.2){\line(1,0){2}}
\put(3.2,0.2){\line(1,0){1}}
\multiput(2.45,0.2)(0.25,0){3}{\circle*{0.05}}
\put(0.2,0.3){\makebox(0,0.4)[b]{$1$}}
\put(1.2,0.3){\makebox(0,0.4)[b]{$2$}}
\put(2.2,0.3){\makebox(0,0.4)[b]{$3$}}
\put(3.2,0.3){\makebox(0,0.4)[b]{$n-2$}}
\put(4.4,0.3){\makebox(0,0.4)[b]{$n-1$}}
\end{picture}
\]
where we have labeled each vertex with the number $i$ of the corresponding fundamental weight $\omega_i$ according to the numbering from \cite{Bou:g4}.  With respect to this numbering, 
  $\vp$ has weight $\la = \omega_d$.  The representation $V$ is irreducible because $\omega_d$ is minuscule \cite[II.2.15]{Jantzen}.

An element of $V$ is \emph{decomposable} if it can be written as $v_1 \wedge \cdots \wedge v_d$ for some $v_i \in K^n$.  As $\SL_n$ acts transitively on the $d$-dimensional subspaces of $K^n$, we conclude that the minimal elements in $V$ are the nonzero decomposable vectors.

In the special case $d = 2$, we may identify $V$ with the vector space $\Skew_n$ of $n$-by-$n$ alternating matrices --- i.e., skew-symmetric matrices with zeros on the diagonal (the extra condition is necessary if $\car \k = 2$) --- where $\SL_n$ acts via $\rho(g)v = gvg^t$.  Then $\vp$ corresponds to $E_{12} - E_{21}$, where $E_{ij}$ denotes a matrix with a 1 in the $(i,j)$-entry and zeros elsewhere.  From this, we see that the minimal elements are the alternating matrices of rank 2.
\end{eg}

\begin{eg}[``symmetric powers''] \label{basic.S}
Take $\Gt = \SL_n$ (with fundamental weights numbered as in the previous example) and take $V$ to be the Weyl module with highest weight $d\omega_1$ for some $d \ge 1$.  If $\car K$ is zero or $> d$, this representation is irreducible by, for example, \cite[p.~50]{Green:GL}, and in that case $V$ can be identified with the $d$-th symmetric power $S^d(\k^n)$ of the tautological representation.  The group $G$ is $\SL_n / \mu_e$ as in the previous example, and the highest weight line is spanned by $\vp = e^d_1$.  This representation has highest weight $\la = d\omega_1$.  Here, minimal elements are the $d$-th powers of nonzero elements of $K^n$.

When $d = 2$ and $\car \k \ne 2$, we may identify $V$ with the vector space $\Sym_n$ of $n$-by-$n$ symmetric matrices, where $\SL_n$ acts by $\rho(g)v = gvg^t$, $\vp$ corresponds to $E_{11}$, and the minimal elements are symmetric matrices of rank 1.
\end{eg}

Returning to the case of general $G$ and $V$, we have:

\begin{eg} \label{basis.eg}
The collection of minimal elements is nonempty and $G$-invariant, so it spans a $G$-invariant subspace of $V$ that contains $\vp$, hence it must be all of $V$.  Therefore, \emph{there is a basis of $V$ consisting of minimal elements}.
\end{eg}

\begin{lem} \label{FH}
If $V$ is irreducible, then $\O$ is the unique closed $G$-orbit in $\P(V)$.  If $V$ is a Weyl module, then $\O$ is the unique closed $G$-orbit in $\P(V)$ such that $V$ is spanned by the lines in the closed orbit.
\end{lem}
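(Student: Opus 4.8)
The plan is to use the standard Borel fixed-point / weight-theory argument. First I would recall that any closed $G$-orbit $\O'$ in $\P(V)$ is a complete variety, so by the Borel fixed-point theorem the Borel subgroup $B$ fixes some point $\k w \in \O'$. Then $w$ is a $B$-eigenvector, hence a weight vector for $T$ whose weight is $B$-dominant in the appropriate sense; more precisely, since $B$ stabilizes the line $\k w$, the weight $\mu$ of $w$ satisfies that $\mu$ occurs in $V$ and the weight space $V_\mu$ together with the $B$-action forces $\mu$ to be the highest weight of the $G$-submodule (resp.\ the unique highest weight of $V$). This is where the hypotheses enter: when $V$ is irreducible with highest weight $\la$, the only line in $\P(V)$ fixed by $B$ is $\k\vp$ --- this is essentially the uniqueness of the highest weight line, and it follows because any $B$-stable line generates under $G$ a submodule, which must be all of $V$, and $V_\la$ is one-dimensional so the line is $\k\vp$. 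Hence $\O' \ni \k\vp$, and since $\O$ is the $G$-orbit of $\k\vp$ and $\O'$ is $G$-stable, $\O \subseteq \O'$; but $\O$ is already closed (as noted in the setup, $\O \cong G/P$ is complete) and $\O'$ is irreducible of the same... actually $\O' = \overline{G\cdot\k w}$-type argument: $\O'$ being a single orbit and containing the orbit $\O$ forces $\O' = \O$.

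For the Weyl module case the subtlety is that $V$ may have $B$-stable lines other than $\k\vp$: if $V$ is not irreducible it has a proper submodule, and that submodule (being nonzero) contains a $B$-stable line whose $G$-orbit in $\P(V)$ is again a closed orbit, but it is \emph{not} the orbit of $\vp$, and $V$ is not spanned by the lines in it (those lines all lie in the proper submodule). So the extra clause ``$V$ is spanned by the lines in the closed orbit'' is exactly what singles out $\O$. The argument: let $\O'$ be any closed $G$-orbit spanning $V$. By Borel fixed point, $\O'$ contains a $B$-stable line $\k w$ with $w$ a weight vector of some weight $\mu$; the span of the $G$-orbit $G\cdot w$ is a $G$-submodule $W$, and $W$ is spanned by the lines in $\O'$, hence $W = V$ by hypothesis. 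Since $V$ is a Weyl module, it has a unique maximal submodule $\rad V$ with $V/\rad V$ irreducible of highest weight $\la$; the submodule generated by any $B$-stable line $\k w$ is all of $V$ only if $w \notin \rad V$, and then $w$ has weight $\la$ (its image generates $V/\rad V$ as a $G$-module, forcing the image to be a highest weight vector, so $\mu = \la$). Since $\dim V_\la = 1$, $\k w = \k\vp$, and as before $\O' = \O$.

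I would assemble this citing the facts already in the excerpt: $\O = G\cdot\k\vp$ is closed and identified with $G/P$; $\End_G(V) = K$ (so the socle/head structure is as stated for Weyl modules, via \cite{Jantzen}); and the minimal/highest-weight-line discussion from the setup. The one point requiring a little care in positive characteristic --- and the place I'd expect to spend the most effort --- is justifying that a $B$-stable line in a Weyl module $V$ not contained in $\rad V$ must be the highest weight line $\k\vp$, i.e.\ that $V_\la$ is one-dimensional and is the only $B$-eigenline mapping nontrivially to the irreducible quotient. This is standard (\cite[II.2.13, II.2.14]{Jantzen}: the formal character of a Weyl module agrees with Weyl's character formula, so $\dim V_\la = 1$, and $V_\la$ is $B$-stable and maps isomorphically onto $(V/\rad V)_\la$), but it is the crux, since everything else is the formal Borel-fixed-point bookkeeping. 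A clean way to package it: the set of $B$-fixed points of $\P(V)$ is finite (it is the image of the $T$-weight lines stable under the unipotent radical of $B$), each lies in a closed $G$-orbit, and among these the only one whose $G$-saturation spans $V$ is $\k\vp$ by the submodule argument above.
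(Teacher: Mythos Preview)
Your proposal is correct and follows essentially the same approach as the paper: Borel fixed point gives a $B$-stable line $\k w$ in $\O'$, and highest-weight theory forces $\k w = \k\vp$. The paper's proof is slightly more streamlined in that it handles both cases at once---noting that the spanning condition is automatic when $V$ is irreducible---and it concludes $\mu = \la$ directly from the fact (\cite[II.2.13]{Jantzen}) that all weights of the $G$-submodule generated by $w$ are $\le \mu$, rather than routing through the radical of the Weyl module as you do; but the content is the same.
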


\begin{proof}
Suppose $\O'$ is a closed $G$-orbit in $\P(V)$ such that $V$ is spanned by the lines in the closed orbit.  (This holds trivially  for any closed orbit when $V$ is irreducible.)  By the Borel Fixed Point Theorem, there is a line $\k x$ in $\O'$ that is stabilized by $B$, hence $x$ is a weight vector for some dominant $\mu \in T^*$.  All the weights of the $G$-submodule of $V$ generated by $x$ are $\le \mu$ by \cite[2.13]{Jantzen}, but this submodule is all of $V$, so $\mu$ must equal $\la$ and $\k \vp = \k x$.
\end{proof}

The extra condition in Lemma \ref{FH} for Weyl modules $V$ is necessary.  Indeed, if there is an exact sequence of representations $1 \ra A \ra V \ra B \ra 1$ where $A$ and $B$ are irreducible, then the closed orbit in $\P(A)$ gives a closed orbit in $\P(V)$ distinct from $\O$.  This occurs, for example, when $G = \SL_2$ over a field $\k$ of characteristic 3 and $V$ is the Weyl module with highest weight 3, in which case we additionally have that the two closed orbits are isomorphic as varieties (to $\P^1$).

\begin{cor} \label{rk1.fix}
Every element of $\GL(V)$ that acts trivially on $\O$ is a scalar matrix.
\end{cor}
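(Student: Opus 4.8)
The plan is to exploit the basis of minimal elements produced in Example~\ref{basis.eg}. Let $\phi \in \GL(V)$ act trivially on the closed orbit $\O \subset \P(V)$. For every minimal $x \in V$, the line $\k x$ lies in $\O$, so $\phi$ fixes the line $\k x$; hence there is a scalar $c(x) \in \kx$ with $\phi(x) = c(x) x$. In other words, every minimal vector is an eigenvector of $\phi$. It remains to show all these eigenvalues coincide.

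First I would reduce to the algebraically closed case: the condition ``$\phi$ is scalar'' can be checked after the faithfully flat base change $\k \to \kalg$, and the orbit $\O$, being $G/P$, is geometrically irreducible and its $\k$-points are Zariski dense in it over $\kalg$, so acting trivially on $\O$ is insensitive to the base field. Working over $\kalg$, pick a basis $x_1, \dots, x_m$ of $V$ consisting of minimal vectors (Example~\ref{basis.eg}), with $\phi(x_i) = c_i x_i$. The key step is then a connectivity argument on the variety of minimal vectors: I claim that for any two minimal $x, y$ whose sum $x + y$ is again a nonzero multiple of a minimal vector, we must have $c(x) = c(y)$, since $\phi(x+y) = c(x) x + c(y) y$ must be a scalar multiple of $x + y$, forcing $c(x) = c(y)$ as soon as $x$ and $y$ are linearly independent. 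So it suffices to show that the minimal vectors, viewed up to scaling, cannot be partitioned into two nonempty subsets $S_1, S_2$ such that no minimal line lies in the span of a line from $S_1$ and a line from $S_2$.

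To establish that, I would use that the set $\hat\O$ of minimal vectors (the affine cone over $\O$ minus the origin) is an irreducible variety on which $G$ acts transitively on lines, and in fact that $\O$ is not contained in any proper linear subspace of $\P(V)$ — indeed the minimal vectors span $V$. Concretely: the partition $\{S_1, S_2\}$ would give a decomposition $V = V_1 \oplus V_2$ into the two eigenspaces of $\phi$ (grouping eigenvalues), with every minimal line lying in $V_1$ or in $V_2$; but then $\hat\O \subset V_1 \cup V_2$, and since $\hat\O$ is irreducible it lies entirely in $V_1$ or entirely in $V_2$, contradicting that the minimal vectors span $V$ (unless one of $V_1, V_2$ is zero, i.e. $\phi$ has a single eigenvalue). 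Here one uses that $\hat\O$ is irreducible because $\O \cong G/P$ is irreducible and the cone map is a fibration with irreducible fibers.

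The main obstacle is the last point — verifying cleanly that $\hat\O$ is irreducible and spans $V$, and that ``contained in $V_1 \cup V_2$'' forces ``contained in one of them.'' The spanning statement is exactly Example~\ref{basis.eg}; irreducibility of $\hat\O$ follows since $G$ is connected and acts transitively on $\O$, so $\O = G/P$ is irreducible, and $\hat\O \to \O$ is a $\mathbb{G}_m$-bundle. Then $\hat\O = (\hat\O \cap V_1) \cup (\hat\O \cap V_2)$ expresses an irreducible variety as a union of two closed subsets, so one of them is all of $\hat\O$; that subset's span is a $\phi$-invariant subspace containing all minimal vectors, hence is $V$, so the corresponding eigenspace $V_i$ equals $V$ and $\phi$ is scalar.
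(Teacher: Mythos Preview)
Your argument is correct, but the paper takes a more direct route. Instead of decomposing $V$ into eigenspaces and invoking irreducibility of the cone $\hat\O$, the paper observes that the assignment $[x] \mapsto c(x)$ (where $\phi(x) = c(x)x$) is a \emph{morphism of varieties} $\O = G/P \to \Gm$; since $G/P$ is projective and $\Gm$ is affine, this morphism must be constant, so all the $c(x)$ coincide, and Example~\ref{basis.eg} finishes the proof. This avoids your eigenspace bookkeeping and the need to pass to $\kalg$; moreover, the paper's argument is phrased for $g \in \GL(V)(R)$ with $R$ an arbitrary commutative $\k$-algebra, so it proves the scheme-theoretic statement needed later for the exactness in Theorem~\ref{rk1}. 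Your approach, by contrast, only directly handles geometric points (the eigenspace decomposition is unavailable over a general $R$), though that suffices for the corollary as stated. What your approach buys is that it never invokes the ``projective to affine is constant'' principle, relying instead on the more elementary fact that an irreducible variety cannot sit in a union of two proper closed subsets.
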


\begin{proof}
Fix a (commutative) $\k$-algebra $R$ and an element $g \in \GL(V)(R)$ in the fixer of the closed $G$-orbit.  By hypothesis, for every $[x]$ in the closed $G$-orbit, there is a scalar $c_x \in R^\times$ such that $gx = c_x x$, i.e., a morphism of varieties $G/P \ra \Gm$ given by $[x] \mapsto gx / x$.  As $G/P$ is projective and $\Gm$ is affine, the image must be a point.  That is, for every minimal $x$, we have $gx = cx$ for some $c \in R^\times$ and Example \ref{basis.eg} gives the claim.
\end{proof}

This corollary has occasionally been proved in a few special cases in the literature on linear preserver problems, see for example the end of \cite{Dieu:LPP}, Prop.~8 in \cite{Jac:J3}, or Cor.~6.3 in \cite{Ferr:strict}.

\begin{defn} \label{pres.min}
A linear transformation $T \!: V \ra V$ \emph{preserves minimals} if for every minimal element $x \in V \ot \kalg$ --- where $\kalg$ denotes an algebraic closure of $\k$ --- $T(x)$ is also a minimal element. 

The ``$\ot \kalg$" is necessary here, because this is the condition we use in our proofs.  It is possible that an element of $\GL(V)$ can preserve the minimal elements of $V$ but fail to preserve the minimal elements of $V \ot \kalg$ even in the very nice case where $\k = \R$, see e.g.\ \cite[Example 1]{Zhang:2}.
\end{defn} 

\begin{prop} \label{ChanLim}
If $T$ is a linear transformation that preserves minimals, then $T$ is invertible.
\end{prop}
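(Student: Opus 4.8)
The plan is to pass to the affine cone over the closed orbit and run a single dimension count. First I would reduce to the case $\k = \kalg$: replacing $\k$, $V$, $T$ by $\kalg$, $V \otimes \kalg$, $T \otimes \kalg$ affects neither invertibility of $T$ nor the hypothesis that $T$ preserves minimals (that hypothesis is already a condition on $V \otimes \kalg$), and, $G$ being split, it leaves intact the geometric facts used below. Write $\widehat{\O} \subseteq V$ for the set of minimal elements together with $0$; this is exactly the affine cone over $\O \subseteq \P(V)$. Since $\O$ is closed and is a homogeneous space for the connected group $G$, it is closed and irreducible, so $\widehat{\O}$ is a closed irreducible subvariety of $V$ of dimension $1 + \dim \O$, and by Example \ref{basis.eg} it spans $V$.

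Now suppose toward a contradiction that $\ker T \ne 0$. Because $T$ preserves minimals, it maps $\widehat{\O} \setminus \{0\}$ into $\widehat{\O} \setminus \{0\}$; in particular $T(\widehat{\O}) \subseteq \widehat{\O}$, and --- crucially --- $\ker T \cap \widehat{\O} = \{0\}$, since $T$ annihilates no minimal element. Hence $T$ restricts to a morphism $T|_{\widehat{\O}} \colon \widehat{\O} \to \widehat{\O}$ whose fiber over $0$ is the single point $\{0\}$.

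I would then apply the dimension theorem for fibers of a dominant morphism to $T|_{\widehat{\O}} \colon \widehat{\O} \to \overline{T(\widehat{\O})}$: since source and target are irreducible, every nonempty fiber has dimension at least $\dim \widehat{\O} - \dim \overline{T(\widehat{\O})}$. Applied to the fiber over $0$, which has dimension $0$, this forces $\dim \overline{T(\widehat{\O})} \ge \dim \widehat{\O}$; as $\overline{T(\widehat{\O})} \subseteq \widehat{\O}$ and $\widehat{\O}$ is irreducible, we conclude $\overline{T(\widehat{\O})} = \widehat{\O}$. Thus $T(\widehat{\O})$ is Zariski-dense in $\widehat{\O}$ and hence spans the same subspace (a linear subspace being Zariski-closed); therefore $T(V) = \operatorname{span} T(\widehat{\O}) = \operatorname{span} \widehat{\O} = V$, so $T$ is surjective, hence invertible --- contradicting $\ker T \ne 0$.

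The crux, and the step I expect to be the main obstacle, is precisely the translation of ``$T$ annihilates no minimal element'' into a dimension statement: that hypothesis is exactly what makes one fiber of $T|_{\widehat{\O}}$ --- the one over $0$ --- as small as conceivably possible, and it is this extremal fiber together with the irreducibility of $\widehat{\O}$ that pins down $\dim \overline{T(\widehat{\O})}$. It is worth noting that the projectivized picture alone does not suffice: $T$ induces a morphism $\O \to \O$, but an arbitrary morphism $G/P \to G/P$ need not be surjective (for instance a projection $\P^1 \times \P^1 \to \P^1$ followed by a section), so it is essential to work with the cone $\widehat{\O}$, where the linearity of $T$ can be exploited.
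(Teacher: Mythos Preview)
Your argument is correct and is essentially the paper's own proof: pass to the affine cone $\widehat{\O}$, observe that the fiber of $T|_{\widehat{\O}}$ over $0$ is a single point, apply the fiber-dimension inequality to force $T(\widehat{\O})$ to have full dimension in $\widehat{\O}$, and then use that $\widehat{\O}$ spans $V$. Two minor remarks: your contradiction hypothesis $\ker T \ne 0$ is never actually used (the argument proves surjectivity of $T$ directly, so the framing as a proof by contradiction is superfluous), and the paper sidesteps taking closures by noting that $T(\widehat{\O})$ is already closed, since the induced map $\O \to \O$ is a morphism of complete varieties and hence has closed image.
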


\begin{proof} 
We recall the argument from \cite[p.~322]{ChanLim:symm2}.  The set $X$ of minimal elements in $V$ has closure $\Xb = X \cup \{ 0 \}$, an irreducible subvariety of $V$.  Note that  $T(\Xb)$ is a closed subvariety of $\Xb$, as can be seen by considering the morphism induced by $T$ on the image $G/P$ of $X$ in $\P(V)$.  The fiber of $T \!: \Xb \ra T(\Xb)$ over 0 is just $\{ 0 \}$, and we deduce that $\dim T(\Xb) = \dim \Xb$ \cite[Th.~4.1]{Hum:LAG}, hence $T(\Xb) = \Xb$.  As $\Xb(\k)$ contains a spanning set for $V$ (Example \ref{basis.eg}), the claim is proved.
\end{proof}

In older times, this was proved by hand for each choice of $G$ and $V$, see for example \cite{Westwick:Grassmann}.

\section{The normalizer of $G$ in $\GL(V)$} \label{norm.sec}

The goal of this section is to prove a technical statement about the structure of the normalizer of $G$ in $\GL(V)$, Proposition \ref{AutD} below.  We maintain the notation and hypotheses of section \ref{orbit.sec}.  

Write $\Aut(\D)$ for the automorphism group of the Dynkin diagram of $G$.  (This is an abuse of notation in that we have already defined $\D$ to be the set of simple roots, i.e., the vertex set of the Dynkin diagram.)
We write $\Aut(\D, \la)$ for the subgroup of $\Aut(\D)$ fixing $\la$.

\begin{eg} \label{basic.eg2}
Returning to Examples \ref{basic.wedge} and \ref{basic.S}, the group $\Aut(\D)$ acts on the weights by permuting the fundamental weights according to its action on the diagram; we find that $\Aut(\D, \la) = 1$ for $V$ with highest weight $d \omega_1$, but for $V = \wedge^d (K^n)$ we have $\Aut(\D,\la) = 1$ for $n \ne 2d$, and $\Zm2$ for $n = 2d$ (in particular for $\SL_4$ acting on the 4-by-4 alternating matrices).
\end{eg}

Write $\Aut(G, \la)$ for the inverse image of $\Aut(\D, \la)$ under the map $\Aut(G) \ra \Aut(\D)$.
To spell this out, given an automorphism $\phi$ of $G$, we may compose it with conjugation by an element of $G$  to produce an element $\phi'$ such that $\phi'(T) = T$ and $\phi'(B) = B$.  The automorphism $\phi'$ is   defined up to conjugation by an element of $T$, so the action of $\phi'$ on $T^*$ is uniquely determined by $\phi$.  Then $\Aut(G, \la)(\k)$ is the collection of $\phi \in \Aut(G)(\k)$ such that $\phi'(\la) = \la$.

The pinning induces a homomorphism $i$ embedding $\Aut(\D)$ in the automorphism group of the simply connected cover of $G$ \cite[XXIII.4, Th.~4.1]{SGA3}.  Further, writing $Z$ for the center of $G$, we have:
\begin{prop} \label{AutD}
The map $i$ induces a homomorphism $\Aut(\D, \la) \ra \Aut(G, \la)$ and an injection $\gamma$ 
such that the diagram
\[
\xymatrix{&&\Aut(\D, \la) \ar[d]_\gamma \ar[dr]^i \\
1 \ar[r] & \Gm \ar[r] & N_{\GL(V)}(G) \ar[r]^\Int & \Aut(G, \la) \ar[r] & 1 
}
\]
commutes, the horizontal sequence is exact, and $\Int$ is surjective on $\k$-points.  Furthermore, $N_{\GL(V)}(G)$ is smooth and $\gamma$ identifies $N_{\GL(V)}(G)$ with $\left( (\Gm \times G)/Z\right) \rtimes \Aut(\D, \la)$.
\end{prop}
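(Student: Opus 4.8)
The plan is to exhibit $N_{\GL(V)}(G)$ as generated by three kinds of elements --- scalars, elements of $\rho(G)$, and lifts of the diagram automorphisms fixing $\la$ --- and then to check that these account for everything, both scheme-theoretically and on $\k$-points. First I would set up the conjugation homomorphism $\Int\colon N_{\GL(V)}(G)\ra\Aut(G)$. Its kernel is the scheme-theoretic centralizer $C_{\GL(V)}(G)$, which equals $\Gm$ because $\End_G(V)=\k$ forces the centralizer algebra of $G$ in $\End(V)$ to be $\k$; this is exactness of the horizontal row at $\Gm$ and at $N_{\GL(V)}(G)$. The image of $\Int$ lies in $\Aut(G,\la)$: any point $g$ of $N_{\GL(V)}(G)$ intertwines $\rho$ with $\rho\circ\Int(g)$, so these two representations are isomorphic, and hence --- after composing $\Int(g)$ with an inner automorphism so that it respects the pinning --- the induced action on $T^*$ carries the highest weight $\la$ to $\la$, which is precisely the condition $\Int(g)\in\Aut(G,\la)$.

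Next I would construct the three ingredients. \emph{(i) That $i$ descends.} Since $V$ is generated by $\vp$, all of its weights lie in $\la$ plus the root lattice $Q$, so the center $Z(G_\sc)$ acts on $V$ through the single character $\bar\la\in P/Q=X^*(Z(G_\sc))$; as $\rho$ is faithful on $G$, the kernel of the isogeny $G_\sc\ra G$ equals $\ker\bigl(\bar\la\colon Z(G_\sc)\ra\Gm\bigr)$. A diagram automorphism $\tau$ fixing $\la$ fixes $\bar\la$, hence preserves this kernel, so $i(\tau)$ descends to an automorphism of $G$, still fixing $\la$; this is the top map $\Aut(\D,\la)\ra\Aut(G,\la)$. \emph{(ii) The map $\gamma$.} For such $\tau$ the representation $\rho\circ i(\tau)$ is again an irreducible representation (resp.\ a Weyl module) and its highest weight is $\la$, so it is isomorphic to $\rho$; since $\End_G(V)=\k$ the intertwiner is unique up to a scalar, and, as it respects the one-dimensional $\la$-weight space, it carries $\k\vp$ to $\k\vp$. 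Normalizing it to fix $\vp$ determines it uniquely; call it $\gamma(\tau)$. Uniqueness makes $\gamma$ a homomorphism with $\Int\circ\gamma=i$, and $\gamma$ is injective because $i$ is; all of this works over $\k$ since these Weyl modules of the split group $G$ are defined over $\k$ and unique up to isomorphism. \emph{(iii) The subgroup $M$.} The homomorphism $\Gm\times G\ra\GL(V)$, $(c,g)\mapsto c\,\rho(g)$, has kernel the copy of the center $Z$ of $G$ given by $z\mapsto(\chi(z)^{-1},z)$ with $\chi$ the central character; being a monomorphism of affine group schemes it is a closed immersion, so it identifies $M:=(\Gm\times G)/Z$ with a closed subgroup of $N_{\GL(V)}(G)$, on which $\Int$ restricts to the quotient map $M\ra G/Z=G_{\mathrm{ad}}$ with kernel $\Gm$.

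To assemble these, note that $\Int\bigl(\gamma(\tau)\bigr)=i(\tau)$ lies in $i(\Aut(\D,\la))$, a complement to the connected subgroup $G_{\mathrm{ad}}$ in $\Aut(G,\la)$ coming from the pinning, so $M\cap\gamma\bigl(\Aut(\D,\la)\bigr)=1$; and $\gamma\bigl(\Aut(\D,\la)\bigr)$ normalizes $M$ because $i(\tau)$ normalizes $G$ and centralizes the scalars. Hence $H:=M\rtimes\Aut(\D,\la)$ is a closed subgroup of $N_{\GL(V)}(G)$ with $\Int(H)=G_{\mathrm{ad}}\cdot i\bigl(\Aut(\D,\la)\bigr)=\Aut(G,\la)$, and $\Int\colon H\ra\Aut(G,\la)$ is faithfully flat (faithfully flat on identity components, an isomorphism on component groups). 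Since $\ker(\Int\colon N_{\GL(V)}(G)\ra\Aut(G))=\Gm\subseteq H$ while $\Int\bigl(N_{\GL(V)}(G)\bigr)\subseteq\Aut(G,\la)=\Int(H)$, comparing the quotients by $\Gm$ forces $H=N_{\GL(V)}(G)$. This simultaneously gives the identification with $\bigl((\Gm\times G)/Z\bigr)\rtimes\Aut(\D,\la)$, the surjectivity of $\Int$ (so exactness of the row on the right), and --- because $(\Gm\times G)/Z$ is smooth (a quotient of a smooth affine group scheme by a closed normal subgroup scheme) and $\Aut(\D,\la)$ is \'etale --- the smoothness of $N_{\GL(V)}(G)$. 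Finally, for surjectivity of $\Int$ on $\k$-points, I would write $\phi\in\Aut(G,\la)(\k)$ as $\bar g\cdot i(\tau)$ with $\bar g\in G_{\mathrm{ad}}(\k)$ and $\tau\in\Aut(\D,\la)(\k)$, lift $\bar g$ to $M(\k)$ using the exact sequence $1\ra\Gm\ra M\ra G_{\mathrm{ad}}\ra1$ together with $H^1(\k,\Gm)=0$, and realize $\phi$ by multiplying that lift by $\gamma(\tau)$.

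The routine parts are Schur's lemma and the semidirect-product bookkeeping. The steps needing genuine care are: identifying the kernel of $G_\sc\ra G$ with $\ker\bar\la$, which is exactly what makes the diagram automorphisms fixing $\la$ descend to $G$; and keeping the scheme-theoretic assertions --- the closed-immersion claim, the faithful flatness of $\Int\colon H\ra\Aut(G,\la)$, and the smoothness of the possibly non-smooth quotient $(\Gm\times G)/Z$ when $\car\k$ divides $|Z|$ --- distinct from the statement about $\k$-points, where Hilbert's Theorem~90 is the essential extra input.
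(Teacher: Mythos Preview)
Your proof is correct and follows essentially the same route as the paper's: identify $\ker\Int=\Gm$ via $\End_G(V)=\k$, show $\Int$ lands in $\Aut(G,\la)$ by the highest-weight argument, construct $\gamma$ as the intertwiner normalized to fix $\vp$, and use Hilbert~90 on the sequence $1\to\Gm\to(\Gm\times G)/Z\to G/Z\to1$ for surjectivity on $\k$-points. You organize the endgame slightly differently---building $H=M\rtimes\Aut(\D,\la)$ first and then comparing quotients by $\Gm$ to get $H=N_{\GL(V)}(G)$, rather than first establishing exactness and then reading off the semidirect-product structure via component groups---and you are more explicit about the scheme-theoretic bookkeeping, but the substance is the same.
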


In the statement, we wrote $\Int$ for the map such that $\Int(n)(g) = ngn^{-1}$ for $n \in N_{\GL(V)}(G)(\k)$ and $g \in G(\k)$.  For  a definition of short exact sequences of affine group schemes and their basic properties, see for example \cite[p.~341]{KMRT}.

\begin{proof}
For the purpose of this proof, write $\Gt$ for the simply connected cover of $G$ and $\Zt$ for its center.  For $\pi \in \Aut(\D, \la)$, $i(\pi) \in \Aut(\Gt)$ normalizes $\ker \la|_{\Zt}$, which is the kernel of $\Gt \ra G$. 
Hence $i(\pi)$ induces an automorphism of $G$.  As $i$ is a section of the natural homomorphism $\Aut(\Gt) \ra \Aut(\D)$, it is also a section of the natural homomorphism $\Aut(G, \la) \ra \Aut(\D, \la)$.  It follows from this discussion that $i$ identifies $\aut(G, \la)$ with $(G/Z) \rtimes \Aut(\D, \la)$.  

We claim that $(G/Z)(\k)$ is in the image of $N_{GL(V)}(G)(\k)$.  Indeed, the normalizer contains $G$ and the scalar matrices $\Gm$, and these two groups have intersection $Z$.  This gives an exact sequence
\begin{equation} \label{AutD.1}
\begin{CD}
1 @>>> \Gm @>>> (\Gm \times G)/Z @>\Int>> G/Z @>>> 1.
\end{CD}
\end{equation}
Applying fppf cohomology gives an exact sequence
\[
\begin{CD}
((\Gm \times G)/Z)(\k) @>\Int>> (G/Z)(\k) @>>> H^1(\k, \Gm),
\end{CD}
\]
where the last term is zero by Hilbert's Theorem 90.  As the first term is contained in $N_{\GL(V)}(G)(\k)$, we have verified the claim.

For $n \in N_{\GL(V)}(G)(\k)$, $\Int(n)$ is an automorphism of $G$ and modifying it by conjugation by an element of $G(\k)$, we may assume that $\Int(n)$ normalizes $B$ and $T$.  As $\Int(n)$ defines an equivalence of the irreducible representations or Weyl modules with highest weights $\la$ and $^n \la$, we deduce that $\Int(n)$ belongs to $\Aut(G, \la)(\k)$.  Running this argument backwards shows that $\Int$ is surjective on $\k$-points.  This completes also the proof that the sequence is exact and that $N_{\GL(V)}(G)$ is smooth (because $\Gm$ and $\Aut(G, \la)$ are).

To construct $\g$, we take $\pi \in \Aut(\D, \la)(\k)$.  The element $n$ such that $\Int(n) = i(\pi)$ is determined up to a factor in $\kx$; we pick $n$ so that $n\vp = \vp$ and put $\g(\pi) := n$.  To verify that it is a homomorphism, note that $\Int(\g(\pi_1 \pi_2)) = \Int(\g(\pi_1) \g(\pi_2))$, so $\g(\pi_1 \pi_2)$ and $\g(\pi_1) \g(\pi_2)$ differ by at most a factor in $\kx$.  But both elements of $\GL(V)$ fix $\vp$, so they are equal.

For the final claim, note that if $\pi \in \Aut(\D, \la)$ is such that $\gamma(\pi)$ is in the identity component of $N_{\GL(V)}(G)$, then $\Int \gamma(\pi) = i(\pi)$ belongs to the identity component of $\Aut(G, \la)$, i.e., to $G/Z$, and we conclude that the semidirect product $N'$ of $N_{\GL(V)}(G)^\circ$ and $\gamma(\Aut(\D, \la))$ is identified with a subgroup of the normalizer.  Furthermore, writing $\pi_0$ to mean the component group, we have 
\[
\gamma(\Aut(\D, \la)) = \pi_0(N') \subseteq \pi_0(N_{\GL(V)}(G)) = \pi_0(\Aut(G, \la)) = i(\Aut(\D, \la)),
\]
so $N'$ equals $N_{\GL(V)}(G)$.  Exactness of \eqref{AutD.1} completes the proof.
\end{proof}

\begin{cor} \label{norm.gen}
If $\k$ is algebraically closed, then $N_{\GL(V)}(G)(K)$ is generated by $G(\k)$, $\k^\times$, and $\gamma(\Aut(\D, \la))$.$\hfill\qed$
\end{cor}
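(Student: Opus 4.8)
The plan is to deduce this directly from Proposition~\ref{AutD}; the only thing requiring a little care is the interaction between $\k$-points, quotients, and semidirect products, which is precisely the elementary fact recalled in the Notation section (for closed subgroups $G, N$ of $\GL(V)$ with $N$ normalizing $G$ and $\k = \kalg$, one has $(G.N)(\k) = G(\k)\cdot N(\k)$). So I do not expect a genuine obstacle here.

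Concretely, I would first check that each of $G(\k)$, $\kx$ (scalar matrices), and $\gamma(\Aut(\D,\la))$ lies in $N_{\GL(V)}(G)(\k)$: the first two obviously normalize $G$, and $\gamma(\Aut(\D,\la)) \subseteq N_{\GL(V)}(G)(\k)$ because $\gamma$ takes values in $N_{\GL(V)}(G)$ and, by its construction in the proof of Proposition~\ref{AutD}, each $\gamma(\pi)$ is chosen to fix $\vp$, hence is a $\k$-point. Thus the subgroup they generate is contained in $N_{\GL(V)}(G)(\k)$, and it remains to prove the reverse inclusion.

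For that, take $n \in N_{\GL(V)}(G)(\k)$ and apply $\Int$, landing in $\Aut(G,\la)(\k)$ by Proposition~\ref{AutD}. Since (from the proof of that proposition) $i$ identifies $\Aut(G,\la)$ with $(G/Z)\rtimes\Aut(\D,\la)$ and $\k$ is algebraically closed, I may write $\Int(n) = \bar g\cdot i(\pi)$ for some $g \in G(\k)$ with image $\bar g$ in $(G/Z)(\k) = G(\k)/Z(\k)$ and some $\pi \in \Aut(\D,\la)$. Inside $\Aut(G,\la)$ the class $\bar g$ is just $\Int(g)$, while $\Int(\gamma(\pi)) = i(\pi)$ by definition of $\gamma$; as $\Int$ is a homomorphism, $\Int(n) = \Int(g)\,\Int(\gamma(\pi)) = \Int\bigl(g\,\gamma(\pi)\bigr)$.

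Consequently $n\bigl(g\,\gamma(\pi)\bigr)^{-1} \in \ker\Int = \Gm$ by the exact sequence of Proposition~\ref{AutD}, so $n = c\,g\,\gamma(\pi)$ for some $c \in \kx$, and $n$ belongs to the subgroup generated by $\kx$, $G(\k)$, and $\gamma(\Aut(\D,\la))$, completing the argument. A cleaner packaging of the same idea: $\gamma$ identifies $N_{\GL(V)}(G)$ with $\bigl((\Gm\times G)/Z\bigr)\rtimes\Aut(\D,\la)$, and one then passes to $\k$-points factor by factor using the Notation-section fact, noting that $\bigl((\Gm\times G)/Z\bigr)(\k)$ is generated by the images of $\Gm(\k) = \kx$ and $G(\k)$. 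The only steps needing attention — neither of them an obstacle — are the order of the factors in the semidirect product and the legitimacy of replacing $(G/Z)(\k)$ by $G(\k)/Z(\k)$ over the algebraically closed field $\k$.
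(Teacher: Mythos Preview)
Your proposal is correct and is essentially the same approach as the paper's: the paper treats the corollary as an immediate consequence of Proposition~\ref{AutD} (hence the bare $\qed$), and your argument is precisely the natural unpacking of that implication via the identification $N_{\GL(V)}(G) \cong \bigl((\Gm\times G)/Z\bigr)\rtimes\Aut(\D,\la)$ together with the fact that taking $\k$-points over an algebraically closed field commutes with the quotient by $Z$.
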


In many of the examples considered below, the following holds:
\begin{equation} \label{Lt.hyp}
\parbox{4.5in}{There is a connected reductive group $\Lt$ such that $\Lt$ has semisimple part $\Gt$ and $\rho$ extends to a homomorphism $\rt \!: \Lt \ra \GL(V)$ such that $\im \rt \supseteq \Gm$ and $\ker \rt$ is a split torus.}  
\end{equation}
This allows us to make the more attractive statement, which holds with no hypotheses on $\k$:
\begin{cor} \label{Lt.norm}
Assuming \eqref{Lt.hyp}, $N_{\GL(V)}(G)(\k)$ is the subgroup of $\GL(V)$ generated by $\rt(\Lt(\k))$ and $\gamma(\Aut(\D, \la))$.
\end{cor}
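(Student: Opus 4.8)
The plan is to reduce everything to Proposition \ref{AutD} by showing that the factor $(\Gm \times G)/Z$ appearing there has $\k$-points exactly $\rt(\Lt(\k))$. Write $N := N_{\GL(V)}(G)$ and let $N' \subseteq \GL(V)(\k)$ be the subgroup generated by $\rt(\Lt(\k))$ and $\gamma(\Aut(\D,\la))$; the goal is to prove $N' = N(\k)$.

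For the inclusion $N' \subseteq N(\k)$: the semisimple part $\Gt$ is normal in $\Lt$, so $\rt(\Lt)$ normalizes $\rt(\Gt) = \rho(\Gt) = G$, whence $\rt(\Lt(\k)) \subseteq N(\k)$; and $\gamma(\Aut(\D,\la)) \subseteq N(\k)$ by the construction of $\gamma$ in Proposition \ref{AutD}. For the reverse inclusion, the key point is that $\rt(\Lt)$ is a closed subgroup of $\GL(V)$ containing both $G = \rt(\Gt)$ and, by \eqref{Lt.hyp}, the scalars $\Gm$; hence it contains $G.\Gm$, which Proposition \ref{AutD} identifies with $(\Gm \times G)/Z$ inside $N$. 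Since \eqref{Lt.hyp} also says $\ker\rt$ is a split torus, $H^1(\k,\ker\rt) = 0$ by Hilbert's Theorem 90, so $\Lt(\k) \to \rt(\Lt)(\k)$ is surjective and $\rt(\Lt(\k)) = \rt(\Lt)(\k) \supseteq ((\Gm \times G)/Z)(\k)$. On the other hand, Proposition \ref{AutD} identifies $N$ with $((\Gm \times G)/Z) \rtimes \Aut(\D,\la)$ via $\gamma$; taking $\k$-points — and using that $\Aut(\D,\la)$ is a finite constant group scheme, so equals its group of $\k$-points — gives $N(\k) = ((\Gm \times G)/Z)(\k) \cdot \gamma(\Aut(\D,\la))$. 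Both factors on the right lie in $N'$, so $N(\k) \subseteq N'$, and combining the two inclusions finishes the proof.

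The step I expect to be the only real obstacle is the surjectivity of $\rt$ on $\k$-points onto $((\Gm \times G)/Z)(\k)$: this is precisely where both clauses of \eqref{Lt.hyp} are used — that $\im\rt \supseteq \Gm$, so that $\rt(\Lt)$ is at least as large as $G.\Gm$, and that $\ker\rt$ is a split torus, so that Hilbert's Theorem 90 permits lifting $\k$-points through $\rt$. Everything else is formal once Proposition \ref{AutD} is in hand, and in particular no new information about $\car\k$ or about the structure of $V$ beyond $\End_G(V) = \k$ (already used to get $\Gm$ into the picture) is needed.
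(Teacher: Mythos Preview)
Your proof is correct and follows essentially the same route as the paper's: both arguments use Proposition \ref{AutD} to identify $N^\circ$ with $(\Gm \times G)/Z$, observe that $\rt(\Lt)$ covers this group, and then apply Hilbert 90 to the split-torus kernel $\ker\rt$ to get surjectivity on $\k$-points, with the semidirect product structure handling the finite factor $\gamma(\Aut(\D,\la))$. Your write-up is somewhat more detailed than the paper's three-line version, but the content is the same.
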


\begin{proof}
The sequence $1 \ra \ker \rt \ra \Lt \xrightarrow{\rt} N_{\GL(V)}(G)^\circ \ra 1$ is exact by the preceding corollary, so $\Lt(K) \ra N_{\GL(V)}(G)^\circ \ra H^1(\k, \ker \rt)$ is exact.  But the last term is 1 by Hilbert 90 because $\ker \rt$ is a split torus.
\end{proof}

\section{Linear transformations preserving minimal elements} \label{min.sec}

We maintain the notation and hypotheses of section \ref{orbit.sec}.  By Proposition \ref{ChanLim}, every linear transformation of $V$ that preserves minimals belongs to $\GL(V)$, so the collection of such is the sub\emph{group} $\Stab_{\GL(V)}(\O)$ of $\GL(V)$ stabilizing the closed $G$-orbit $\O \subseteq \P(V)$.

\begin{eg} \label{norm.eg}
If $n \in \GL(V)$ normalizes $G$, then for every minimal $x$, the $G$-orbit of $nx$ in $\P(V)$ is closed and spans $V$, hence $nx$ is also minimal by Lemma \ref{FH}.  That is, $N_{\GL(V)}(G)$ is contained in $\Stab_{\GL(V)}(\O)$.
\end{eg}

Under a technical hypothesis spelled out in Definition \ref{dem.def}, we can say that this containment is an equality. 
Recall that $P$ is the parabolic subgroup of $G$ stabilizing the highest weight line $\k\vp$.

\begin{thm} \label{rk1}
$\Stab_{\GL(V)}(\O) = N_{\GL(V)}(G)$ 
if $P$ is not exceptional.
\end{thm}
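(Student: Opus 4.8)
The plan is to prove the nontrivial containment $\Stab_{\GL(V)}(\O) \subseteq N_{\GL(V)}(G)$, since the reverse inclusion is Example \ref{norm.eg}. So suppose $n \in \GL(V)$ stabilizes $\O$; I want to show $n$ normalizes $G$. The natural strategy is to recover $G$ intrinsically from the geometry of $\O \subseteq \P(V)$. Since $\O \cong G/P$ is a closed orbit, I would first argue that the projective automorphism group $\Aut_{\P(V)}(\O)$ of the pair $(\P(V), \O)$ — that is, the subgroup of $\PGL(V)$ carrying $\O$ to itself — has identity component containing the image $\Gb$ of $G$, and is in fact not much larger. The key input here should be a classical fact about homogeneous projective varieties: for $G/P$ with $P$ a maximal parabolic (more generally, in our situation where $V$ is a ``fundamental'' type representation), $G/P$ embedded by $\O(1)$ has automorphism group with identity component equal to (a quotient of) $G$ itself, \emph{provided} $G/P$ is not on a short list of exceptions. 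This is precisely where the hypothesis ``$P$ is not exceptional'' enters — Definition \ref{dem.def} presumably lists exactly the pairs $(G,P)$ (equivalently, the homogeneous spaces $G/P$) whose automorphism group is strictly bigger than expected, e.g.\ projective spaces $\P^n = \SL_{n+1}/P_1$, quadrics $\SO_m/P_1$, and perhaps a few sporadic cases like the Grassmannian-type or spinor-type coincidences.

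Granting that, the argument runs as follows. Let $\Gb \subseteq \PGL(V)$ be the image of $G$; then $\Stab_{\GL(V)}(\O)$ maps to $\Aut(\P(V), \O) =: H$, a closed subgroup of $\PGL(V)$ whose identity component is $\Gb$ (by the non-exceptional hypothesis and the cited geometric fact). Now I would show $H$ normalizes $\Gb$: its identity component $\Gb$ is obviously normalized, and for the component group I would use that $H$ acts on $\O$ and that any automorphism of the variety $\O = G/P$ acts on its ``tangent geometry'' in a way that must permute the Tits building / the set of Schubert-type subvarieties, forcing conjugation by elements of $H$ to send $\Gb$ to itself. More cleanly: $\Gb$ is the connected automorphism group of $\O$ as an abstract variety (not just with the polarization), hence is characteristic in $H$. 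Therefore $H$ normalizes $\Gb$, and pulling back along $\GL(V) \to \PGL(V)$ (using Corollary \ref{rk1.fix}, which says the kernel $\Gm$ of this map acts trivially on $\O$ and consists of scalars, so causes no trouble) we get that $\Stab_{\GL(V)}(\O)$ normalizes the preimage of $\Gb$, which is $\Gm \cdot G$, and hence normalizes its derived group $G$. That yields $\Stab_{\GL(V)}(\O) \subseteq N_{\GL(V)}(G)$.

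A few technical points need care. First, the identification ``$\operatorname{Aut}^\circ(G/P) = \Gb$'' is a characteristic-$0$ or algebraically-closed statement in the references I have in mind (e.g.\ work going back to Tits, Demazure on automorphisms of generalized flag varieties); I would need the version valid over our field $\k$ with $\car \k \ne 2,3$, probably by base-changing to $\kalg$, applying the classical result there, and then descending — using that $\O$, $G$, and $n$ are all defined over $\k$ and that $N_{\GL(V)}(G)$ is already known to be smooth with the structure computed in Proposition \ref{AutD}. Second, I should make sure the ``$\O$ spans $V$'' condition (built into our setup, cf.\ Example \ref{basis.eg}) is being used so that the embedding $\O \hookrightarrow \P(V)$ is linearly normal enough that automorphisms of $\O$ preserving $\O(1)$ do extend to $\PGL(V)$ — i.e.\ that $\Aut(\O, \O(1)) \twoheadrightarrow$ its image in $\PGL(V)$ with the expected kernel; when $V$ is a minuscule or small Weyl module this is standard.

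The main obstacle I anticipate is pinning down the precise statement of ``$P$ is not exceptional'' so that it exactly matches what the geometric input requires: I expect Definition \ref{dem.def} is reverse-engineered to be the list of pairs $(G, V)$ (or $(G,P)$) for which $\operatorname{Aut}^\circ(\O)$ strictly contains $\Gb$, together with possibly a few cases where $\Aut(\O)/\Aut^\circ(\O)$ is bigger than $\Aut(\D,\la)$ — and verifying that this list is complete, in all relevant characteristics, is the real content. Subtleties like quadrics (where $\SO$ sits inside a much bigger $\PGO$ only up to the usual factor, but the variety's automorphism group is genuinely larger: a quadric hypersurface's automorphisms are just the projective ones preserving it, so actually the issue there is different), the projective space case (where $\Aut(\P^n) = \PGL_{n+1}$ is everything), and low-rank coincidences ($\SL_4$ vs.\ $\SO_6$, spinor varieties, etc.) all have to be correctly bookkept. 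So I would structure the proof to (i) reduce to $\k = \kalg$; (ii) invoke the classification of $\Aut^\circ$ of rational homogeneous spaces to get $\Aut^\circ(\O) = \Gb$ under the non-exceptional hypothesis; (iii) deduce $H := \Stab_{\PGL(V)}(\O)$ normalizes $\Gb$; (iv) lift to $\GL(V)$ via Corollary \ref{rk1.fix} to conclude; and (v) check this is consistent with (indeed equals, via Proposition \ref{AutD}) the explicit description of $N_{\GL(V)}(G)$, so that the $\Aut(\D,\la)$ factor is accounted for and nothing is lost.
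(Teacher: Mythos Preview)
Your proposal is correct and follows essentially the same route as the paper: the key input is Demazure's theorem that $\Aut(G/P)^\circ$ equals the adjoint group when $P$ is not exceptional, combined with Corollary~\ref{rk1.fix} and the observation that $G$ is characteristic (as the semisimple part) in the identity component of the stabilizer. The paper's execution is slightly more streamlined---it compares the two short exact sequences $1 \to \Gm \to N^\circ \to \Aut(G)^\circ \to 1$ and $1 \to \Gm \to S^\circ \to \Aut(G/P)^\circ \to 1$ directly and reads off $S^\circ = N^\circ$ by a dimension count, rather than passing through $\PGL(V)$---and your worries about characteristic restrictions and linear normality are unnecessary (Demazure's result holds in arbitrary characteristic, and there is no $\car \k \ne 2,3$ hypothesis here).
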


The proof will come at the end of the section.

\begin{defn} \label{dem.def}
Following \cite{Dem:aut}, we define:
 \begin{enumerate}
\item If $G$ is simple, then $P$ is \emph{exceptional} in the following cases:
   \begin{enumerate}
   \item \label{Dem.Sp} $G$ has type $C_\ell$ with $\ell \ge 2$ and $P$ has Levi subgroup of type $C_{\ell -1}$.
   \item \label{Dem.HSpin} $G$ has type $B_\ell$ with $\ell \ge 2$ and $P$ has Levi subgroup of type $A_{\ell - 1}$.
   \item \label{Dem.G2} $G$ has type $G_2$ and $P$ is the stabilizer of the highest weight vector in the 7-dimensional fundamental Weyl module.
      \end{enumerate}
\item If $G$ is not simple, we write its adjoint group as $\Gb_1 \times \cdots \times \Gb_r$ where each $\Gb_i$ is simple.  We say that $P$ is exceptional if at least one of its images in $\Gb_1, \ldots, \Gb_r$ is exceptional.
\end{enumerate}
\end{defn}

All three cases in \ref{dem.def}(1) give genuine exceptions to Theorem \ref{rk1}.  Item \eqref{Dem.Sp} includes the case where $G = \Sp_{2\ell}$ for $\ell \ge 2$ and $V$ is the natural representation.  In that case, every nonzero vector is a minimal element, so $\Stab_{\GL_{2\ell}}(\O)$ is all of $\GL_{2\ell}$, but $N_{\GL_{2\ell}}(\Sp_{2\ell})$ is $\Gm.\Sp_{2\ell}$, which has dimension only $2\ell^2 + \ell+1$.  Item \eqref{Dem.HSpin} is addressed in \S\ref{ignore}.  Item \eqref{Dem.G2} includes the case where $G$ is the automorphism group of the split octonions and $V$ is the space of trace zero octonions.  In that case, the closed $G$-orbit is the quadric in $\P(V)$ defined by the quadratic norm form $q$ \cite[9.2]{CG}, hence $\Stab_{\GL(V)}(\O)$ is the group of similarities of $q$; this has dimension 22 as opposed to $\dim \Gm.G = 15$.

\begin{rmk*}
Demazure includes a fourth item in his version of \ref{dem.def}, namely that $G \ne 1$ and $P = G$, which would appear as (1d) in \ref{dem.def} above.  But this case cannot occur here due to our assumption that the representation $V$ is faithful.\end{rmk*}

\begin{proof}[Proof of Theorem \ref{rk1}] We abbreviate $N := N_{\GL(V)}(G)$ and $S := \Stab_{\GL(V)}(\O)$.
Consider the diagram
\[
\xymatrix{
1 \ar[r] &\Gm \ar[r] \ar@{=}[d]& N^\circ \ar[r]^{\Int} \ar@{^(->}[d]& \Aut(G)^\circ \ar[r] \ar[d]^c& 1 \\
1 \ar[r] & \Gm \ar[r]& S^\circ \ar[r] &\Aut(G/P)^\circ \ar[r] & 1
}
\]
where the middle vertical arrow is the natural inclusion (as in Example \ref{norm.eg}) and $c$ is given by
\[
c(\Int(x))gP = xgP \quad \text{for $g \in G(\k)$ and $\Int(x) \in \Aut(G)^\circ(\k)$.}
\]
The top sequence is exact.  As $P$ is not exceptional, \cite[Th.~1]{Dem:aut} gives that $c$ is an isomorphism.  From this and Corollary \ref{rk1.fix} we see that the bottom sequence is exact.
In particular, $S^\circ$ is smooth because $\Aut(G/P)^\circ$ and $\Gm$ are.  Further,
\[
1 + \dim \Aut(G)^\circ = \dim N^\circ \le \dim S^\circ = 1 + \dim \Aut(G)^\circ,
\]
so we have $S^\circ = N^\circ$.

Recall from Proposition \ref{AutD} that $N^\circ$ is reductive with semisimple part $G$, hence $G$ is a characteristic subgroup of $S^\circ$.  As $S$ normalizes $S^\circ$, we deduce that $S$ normalizes $G$.
\end{proof}

We now apply Theorem \ref{rk1} and Proposition \ref{AutD} to compute $\Stab_{\GL(V)}(\O)$ in various specific cases. 
 For the following result compare \cite[Cor.~2]{Lim:symm}, \cite[Th.~6.5]{Wa:det}, \cite[Th.~11]{Wa:symm1}, or \cite[Cor.~6.2] {Guralnick:LPP2} (with $\k$ algebraically closed).

\begin{cor}[symmetric matrices] \label{symm.rk1}
Suppose $\k$ has characteristic $\ne 2$.  Every invertible linear transformation of $\Symm_n(\k)$ that sends rank $1$ matrices to rank $1$ matrices is of the form
 \begin{equation} \label{skew.1}
X \mapsto rPXP^t \quad \text{for some $r \in \kx$ and $P \in \GL_n(\k)$.}
\end{equation}
\end{cor}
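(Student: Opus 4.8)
The plan is to recognize $\bigl(\Symm_n(\k),\ \text{rank }1\bigr)$ as the $d=2$ case of Example~\ref{basic.S} and then apply Theorem~\ref{rk1} and Corollary~\ref{Lt.norm}. I would dispose of $n=1$ by hand ($\Symm_1(\k)=\k$, and $X\mapsto rX$ already has the stated form with $P=1$) and assume $n\ge 2$. Then I would set $\Gt=\SL_n$ acting on $V=\Symm_n(\k)$ by $\rho(g)v=gvg^t$; since $\car\k\ne2$, this $V$ is the (irreducible) Weyl module of highest weight $\la=2\omega_1$, its image $G$ is split semisimple of type $A_{n-1}$, and Example~\ref{basic.S} tells us the minimal elements of $V\ot\kalg$ are exactly the rank-$1$ symmetric matrices over $\kalg$. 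Before the main argument I would record two facts. (i) The parabolic $P$ is \emph{not} exceptional: by Definition~\ref{dem.def} the exceptional parabolics occur only in simple groups of type $B_\ell$, $C_\ell$ ($\ell\ge2$), or $G_2$, whereas $G$ is simple of type $A_{n-1}$. (ii) Hypothesis~\eqref{Lt.hyp} holds with $\Lt=\Gm\times\GL_n$ and $\rt(c,g)(v)=c\,gvg^t$: the semisimple part of $\Lt$ is $\SL_n=\Gt$, $\im\rt$ contains the scalar matrices $\Gm$, and $\ker\rt=\{(a^{-2},aI_n):a\in\Gm\}\cong\Gm$ is a split torus --- here I would use $\car\k\ne2$ to see that an element of $\GL_n$ acting on $\Symm_n$ as a scalar must itself be scalar. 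Also $\Aut(\D,\la)=1$ by Example~\ref{basic.eg2}.

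Given an invertible $T\!:\Symm_n(\k)\ra\Symm_n(\k)$ sending rank-$1$ matrices to rank-$1$ matrices, the core of the proof is to show that $T\ot\kalg$ preserves minimals --- equivalently, that $T$ stabilizes the closed orbit $\O$. For this I would argue with the variety $\Xb$ of symmetric matrices of rank $\le 1$ (the closure of the minimal elements): it is defined over $\k$ and is geometrically integral of dimension $n$, the affine cone over the Veronese embedding of $\P^{n-1}$. Since $c\,vv^t=(\sqrt c\,v)(\sqrt c\,v)^t$ over $\kalg$, the rank-$\le1$ symmetric matrices over $\kalg$ are precisely the $vv^t$ with $v\in\kalg^n$; hence, provided $\k$ is infinite (to which I would reduce), $\{vv^t:v\in\k^n\}$ is Zariski dense in $\Xb$. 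Since $T$ carries this dense subset into $\Xb(\k)\subseteq\Xb_{\kalg}$ and $T\ot\kalg$ is a linear automorphism, $T\ot\kalg$ maps $\Xb_{\kalg}$ into itself, hence onto a closed irreducible subvariety of $\Xb_{\kalg}$ of dimension $n$, hence onto all of $\Xb_{\kalg}$; as $T$ fixes $0$, it permutes the rank-$1$ symmetric matrices over $\kalg$, i.e.\ $T\ot\kalg$ preserves minimals.

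At that point $T$ lies in $\Stab_{\GL(V)}(\O)(\k)$, so by Theorem~\ref{rk1} (applicable by (i)) we get $T\ot\kalg\in N_{\GL(V)}(G)(\kalg)$, and since $N_{\GL(V)}(G)$ is a closed subgroup scheme of $\GL(V)$ defined over $\k$ and $T$ is a $\k$-point, $T\in N_{\GL(V)}(G)(\k)$. By Corollary~\ref{Lt.norm} and (ii), $N_{\GL(V)}(G)(\k)$ is generated by $\rt(\Lt(\k))$ and $\g(\Aut(\D,\la))=\{1\}$, so $T=\rt(c,P)$ for some $c\in\kx$, $P\in\GL_n(\k)$ --- i.e.\ $T$ is the map $X\mapsto c\,PXP^t$, which is of the claimed form with $r=c$. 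The step I expect to be the real obstacle is the middle paragraph: promoting ``preserves rank $1$ over $\k$'' to ``preserves rank $1$ over $\kalg$'' is exactly the $\ot\,\kalg$ subtlety flagged after Definition~\ref{pres.min}; the rest is essentially bookkeeping with the results of Sections~\ref{orbit.sec}--\ref{min.sec}.
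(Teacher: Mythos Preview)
Your proof is correct and follows the paper's approach: the same $\Lt=\Gm\times\GL_n$ with the same kernel, the observation $\Aut(\D,\la)=1$, and the conclusion via Theorem~\ref{rk1} and Corollary~\ref{Lt.norm}. The only difference is your middle paragraph: the paper's two-sentence proof tacitly reads ``sends rank $1$ to rank $1$'' as ``preserves minimals'' in the sense of Definition~\ref{pres.min} (i.e., over $\kalg$) and applies Theorem~\ref{rk1} directly, whereas you supply a density argument to promote the hypothesis from $\k$ to $\kalg$ --- a step the paper simply does not address.
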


\begin{proof}
Take $\Gt = \GL_n$ as in Example \ref{basic.S}.  In the notation of \eqref{Lt.hyp}, take $\Lt = \Gm \times \GL_n$ and for $(r, P) \in \kx \times \GL_n(\k)$, define $\rt(r, P)$ as in \eqref{skew.1}.  For every commutative $\k$-algebra $R$, the set of $R$-points of $\ker \rt$ is $\{ (t^2, t^{-1}) \mid t \in R^\times \}$.  As $\Aut(\D, \la) = 1$, combining Corollary \ref{Lt.norm} and Theorem \ref{rk1} gives the claim.
\end{proof}

For the next result compare  \cite[Th.~3]{MarcusWestwick} (with $\k = \R$), \cite[Th.~5.5]{Wa:det}, or \cite[Cor.~7.3]{Guralnick:LPP2}.

\begin{cor}[alternating matrices] \label{skew.rk1} 
For $n \ge 2$ and $n \ne 4$, every invertible linear transformation of $\Skew_n(\k)$ that sends rank $2$ matrices to rank $2$ matrices is of the form \eqref{skew.1}.
If $n = 4$, then every invertible linear transformation of $\Skew_n(\k)$ that sends rank $2$ matrices to rank $2$ matrices is as in \eqref{skew.1} or is 
\begin{equation} \label{skew.2}
X \mapsto rPX^* P^t \quad \text{for some $r \in \kx$, $P \in \GL_n(\k)$,} 
\end{equation}
and 
\begin{equation} \label{skew.star}
\left( \begin{smallmatrix}
0&x_1&x_2&x_3\\
-x_1&0&x_4&x_5\\
-x_2&-x_4&0&x_6\\
-x_3&-x_5&-x_6&0
\end{smallmatrix} \right)^* = 
\left( \begin{smallmatrix}
0&x_1&-x_2&-x_4\\
-x_1&0&-x_3&-x_5\\
x_2&x_3&0&x_6\\
x_4&x_5&-x_6&0
\end{smallmatrix} \right).
\end{equation}
\end{cor}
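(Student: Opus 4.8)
The plan is to recognize the vector space $\Skew_n(\k)$ of alternating matrices as the representation $V = \wedge^2(K^n)$ of $\Gt = \SL_n$ (or better, $\GL_n$) treated in Example~\ref{basic.wedge}, where we saw that the minimal elements are exactly the rank~$2$ alternating matrices. Thus an invertible linear transformation sending rank~$2$ matrices to rank~$2$ matrices is precisely an element of $\Stab_{\GL(V)}(\O)$. The parabolic $P$ here has Levi of type $A_1 \times A_{n-3}$, which is not of the exceptional form in Definition~\ref{dem.def}(1) for any $n \ge 2$ (the case $n=4$ gives type $C_2 = B_2$ only under the identification with $\Sp_4$ acting on its natural module, but here $G$ is $\SL_4/\mu_2 = \SO_6$ acting on $\wedge^2 K^4$, whose parabolic is not exceptional), so Theorem~\ref{rk1} applies and gives $\Stab_{\GL(V)}(\O) = N_{\GL(V)}(G)$. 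It remains to identify this normalizer concretely via Corollary~\ref{Lt.norm}.

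First I would verify hypothesis~\eqref{Lt.hyp}: take $\Lt = \Gm \times \GL_n$ with $\rt(r,P)\colon X \mapsto rPXP^t$, exactly as in the proof of Corollary~\ref{symm.rk1}. One checks that $\ker\rt$ is the split torus $\{(t^{-2}, tI) : t \in \Gm\}$ (the factor of $\det$ is absorbed since $PXP^t$ scales by $\det(P)$-free terms... more precisely $(tI)X(tI)^t = t^2 X$, so $r = t^{-2}$), so $\rt$ has image a reductive group with semisimple part $G$ and the hypothesis holds. Then I invoke Example~\ref{basic.eg2}: $\Aut(\D,\la) = 1$ when $n \ne 2d = 4$, and $\Aut(\D,\la) = \Zm2$ when $n = 4$. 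For $n \ge 2$, $n \ne 4$, Corollary~\ref{Lt.norm} then says $N_{\GL(V)}(G)(\k) = \rt(\Lt(\k))$, which is exactly the set of maps in \eqref{skew.1}, proving the first assertion.

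For $n = 4$ the extra work is to pin down the generator $\g(\pi)$ of the $\Zm2$ factor explicitly as the involution in \eqref{skew.star}. The outer automorphism $\pi$ of the $A_3$ diagram fixing $\omega_2$ is the flip $\omega_1 \leftrightarrow \omega_3$; under the exceptional isogeny $\SL_4 \to \SO_6$ (or directly) this corresponds to an orthogonal transformation, and by Proposition~\ref{AutD} the element $\g(\pi) \in \GL(V)$ is the unique lift of $i(\pi)$ normalizing $G$ and fixing the highest weight vector $\vp = e_1 \wedge e_2 \leftrightarrow E_{12} - E_{21}$. The main obstacle — really the only nontrivial computation — is to check that the specific linear map $X \mapsto X^*$ written in \eqref{skew.star} is this $\g(\pi)$: I would do this by confirming (a) $X \mapsto X^*$ fixes $E_{12}-E_{21}$, visible from the formula since setting $x_1 = 1$ and the rest zero is fixed; (b) it normalizes $G = \SL_4/\mu_2$ acting by $X \mapsto PXP^t$, which amounts to the identity $(PXP^t)^* = \sigma(P) X^* \sigma(P)^t$ for an appropriate automorphism $\sigma$ of $\GL_4$ — equivalently that $*$ intertwines the two non-equivalent actions of $\SL_4$ on $\wedge^2 K^4$ via $\rho$ and $\rho \circ (\text{transpose-inverse})$; and (c) $*$ is an involution, which is a direct check on the $6 \times 6$ matrix of \eqref{skew.star}, or can be seen from $\pi^2 = 1$. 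Once these are verified, Corollary~\ref{Lt.norm} gives that $N_{\GL(V)}(G)(\k)$ is generated by the maps \eqref{skew.1} together with $\g(\pi) = {*}$, and composing an element of \eqref{skew.1} with $*$ produces exactly the maps \eqref{skew.2}, completing the proof. I expect step (b), correctly bookkeeping how $*$ conjugates the $\GL_4$-action and confirming the Dynkin-diagram automorphism it induces is $\pi$ rather than the identity, to be the delicate point; it can be checked on the diagonal torus and the simple root subgroups, or deduced abstractly from the fact that $*$ is $G$-semilinear but not $G$-linear (since $\End_G(V) = \k$, any non-scalar normalizing element must induce a nontrivial diagram automorphism, and $\Aut(\D,\la)$ has only the one nontrivial element).
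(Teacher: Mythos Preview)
Your proposal is correct and follows essentially the same route as the paper: the paper also takes $\Lt = \Gm \times \GL_n$ with the same $\rt$, invokes Corollary~\ref{Lt.norm} together with Theorem~\ref{rk1}, and for $n=4$ identifies $\gamma(\pi)$ with $*$ by checking that $*$ fixes $\vp = E_{12}-E_{21}$ and that $\Int(*)$ normalizes $T$ and permutes the simple root subgroups as $\pi$ dictates---exactly your option of checking on the torus and root subgroups. (Your parenthetical about ``$C_2 = B_2$'' is muddled and unnecessary: for $n=4$ the group has type $A_3$ with Levi of type $A_1 \times A_1$, and none of the exceptional cases in Definition~\ref{dem.def} are in play; but this does not affect the argument.)
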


The map $*$ is a Hodge star operator, which is not uniquely determined.  Said differently, one can replace $*$ with its composition by any map as in \eqref{skew.1}.  Therefore, one finds slightly different formulas in other sources, such as \cite[p.~921]{MarcusWestwick} and \cite[p.~15]{Pierce:survey}. 

\begin{proof}[Proof of Corollary \ref{skew.rk1}]
We use the same $\Lt$ and $\rt$ from the proof of the preceding corollary, substituting $\Skew_n(\k)$ for $V$.
 If $n \ne 4$, $\Aut(\D, \la) = 1$ and the proof is complete.  Otherwise $n = 4$, $\Aut(\D, \la) = \Zm2$ and we are tasked with finding the image of the nonidentity element $\pi$ of $\Aut(\D, \la)$ under the map $\gamma$ from Proposition \ref{AutD}.  The element $*$ of $\GL(V)$ fixes $\vp = E_{12} - E_{21}$.  Furthermore, one checks that $\Int(*)$ normalizes the maximal torus $T$ and permutes the root subgroups (described in \cite[\S{VIII.13.3}]{Bou:g7}) as indicated by the action of $\pi$ on $\D$; it follows that $*$ normalizes $G$, hence $\gamma(\pi) = *$.
\end{proof}

For the next result compare \cite{Hua:Gr}; \cite[Th.~1]{MarcusMoyls:tensor} or \cite[Th.~1]{Minc:rk1} (for $\k$ algebraically closed of characteristic zero); or \cite[Th.~3.5]{Wa:det}.
\begin{cor}[rectangular matrices] \label{rect.rk1} 
For $m,n \ge 2$ and $m \ne n$, every invertible  linear transformation of the $m$-by-$n$ matrices with entries in $\k$ that sends rank $1$ matrices to rank $1$ matrices is of the form
\begin{equation} \label{rect.1}
X \mapsto AXB \quad \text{for some $A \in \GL_m(\k)$ and $B \in \GL_n(\k)$.}
\end{equation}
For $n = m \ge 2$, every invertible linear transformation of $M_n(K)$ is of the form \eqref{rect.1} or is
\begin{equation} \label{rect.2}
X \mapsto AX^t B \quad \text{for some $A, B \in \GL_n(\k)$}.
\end{equation}
\end{cor}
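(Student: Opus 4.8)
The plan is to reuse the framework of Corollaries~\ref{symm.rk1} and~\ref{skew.rk1}. Identify the space of $m$-by-$n$ matrices with $V := K^m \ot K^n$, put $\Gt := \SL_m \times \SL_n$, and let $\Gt$ act via $\rho(A,B)(X) = AXB^t$, so that a decomposable tensor $u \ot v$ corresponds to the rank-one matrix $uv^t$. Since $K^m$ and $K^n$ are minuscule, the external tensor product $V$ is simultaneously irreducible and a Weyl module, with highest weight $\la = (\omega_1,\omega_1)$ and highest weight vector $\vp \leftrightarrow E_{11}$; and as $\SL_m$ (resp.\ $\SL_n$) is transitive on the lines in $K^m$ (resp.\ $K^n$), the minimal elements of $V$ are exactly the rank-one matrices. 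A linear transformation that sends rank-one matrices to rank-one matrices therefore preserves minimals in the sense of Definition~\ref{pres.min} --- the rank $\le 1$ locus is geometrically irreducible and cut out over the prime field by the $2$-by-$2$ minors, so the condition passes to $\kalg$ automatically when $K$ is infinite --- and is invertible by Proposition~\ref{ChanLim}. Hence the transformations to be classified form the group $\Stab_{\GL(V)}(\O)$.

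Next I would put the two main results to work. The simple factors of $G$ are of type $A$, whereas the exceptional pairs in Definition~\ref{dem.def}(1) involve only types $C$, $B$, and $G_2$; so $P$ is not exceptional and Theorem~\ref{rk1} gives $\Stab_{\GL(V)}(\O) = N_{\GL(V)}(G)$. Hypothesis~\eqref{Lt.hyp} holds with $\Lt := \GL_m \times \GL_n$ and $\rt(A,B)(X) = AXB^t$: the semisimple part of $\Lt$ is $\Gt$, the image of $\rt$ contains the scalars, and $\ker\rt$ is the split torus $\{(aI_m,a^{-1}I_n) : a \in \kx\}$. Thus $\rt(\Lt(K))$ is precisely the set of maps $X \mapsto AXB$ with $A \in \GL_m(K)$, $B \in \GL_n(K)$, i.e.\ the transformations in~\eqref{rect.1}, and by Corollary~\ref{Lt.norm} it remains only to compute $\Aut(\D,\la)$ and, when it is nontrivial, the image under $\gamma$ of its generator.

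The Dynkin diagram $\D$ of $G$ is the disjoint union of chains of types $A_{m-1}$ and $A_{n-1}$, and $\Aut(\D)$ is generated by the order-reversal of each chain, together with the transposition of the two chains when $m=n$. An order-reversal carries $\omega_1$ to the fundamental weight at the far end of its chain, which differs from $\omega_1$ once the chain has at least two nodes, so no order-reversal fixes $\la$; the chain-transposition does fix $\la = (\omega_1,\omega_1)$. Hence $\Aut(\D,\la) = 1$ when $m \ne n$, which settles that case, and $\Aut(\D,\la) = \Zm2$, generated by the chain-transposition $\pi$, when $m=n$. In the latter case I would check, exactly as in the proof of Corollary~\ref{skew.rk1}, that the transpose $\tau\colon X \mapsto X^t$ represents $\gamma(\pi)$: conjugation by $\tau$ sends $\rho(A,B)$ to $\rho(B,A)$, so $\tau$ normalizes $G$ and induces the swap of the two factors, which one verifies preserves the maximal torus and carries the canonical root-subgroup generators for the first $\SL_n$ to those for the second (so it realizes $\pi$ on $\D$), while $\tau$ fixes $\vp = E_{11}$. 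By Corollary~\ref{Lt.norm}, $N_{\GL(V)}(G)(K)$ is then generated by the maps $X \mapsto AXB$ and $\tau$; since $\tau$ has order $2$ and conjugation by it interchanges $A$ and $B$, this group is $\{X \mapsto AXB\} \rtimes \langle\tau\rangle$, and every element of it has the form~\eqref{rect.1} or~\eqref{rect.2}.

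I do not expect a genuine obstacle, as Theorem~\ref{rk1} and Proposition~\ref{AutD} carry the weight. The two points demanding care are (i) recognizing that the diagram automorphism fixing $\la$ is the factor-swap and not an order-reversal, and (ii) the explicit verification --- wholly parallel to the one in Corollary~\ref{skew.rk1} --- that the transpose normalizes $G$, induces $\pi$, and fixes the highest weight vector, so that $\gamma(\pi) = \tau$; this is the only real computation in the argument. A minor caveat, already flagged in Definition~\ref{pres.min}, is the passage from rank-one matrices over $K$ to minimal elements over $\kalg$, which is routine for infinite $K$.
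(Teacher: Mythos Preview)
Your proposal is correct and follows exactly the approach the paper's sketch indicates: take $\Gt = \SL_m \times \SL_n$, $\Lt = \GL_m \times \GL_n$ with $\rt(A,B)X = AXB^t$, and imitate the proofs of Corollaries~\ref{symm.rk1} and~\ref{skew.rk1}. You have simply filled in the details the paper leaves to the reader --- the verification that $P$ is not exceptional, the computation of $\Aut(\D,\la)$, and the identification of $\gamma(\pi)$ with the transpose when $m=n$ --- and flagged the $\kalg$ caveat from Definition~\ref{pres.min}, all of which are straightforward.
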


\begin{proof}[Sketch of proof]
Here one takes $\Gt := \SL_m \times \SL_n$ and $\Lt := \GL_m \times \GL_n$ acting on the space $V$ of $m$-by-$n$ matrices via $\rt(A,B)X = A X B^t$ and imitates otherwise the proofs of Corollaries \ref{symm.rk1} and \ref{skew.rk1}.
\end{proof}

\begin{eg}[homogeneous polynomials of degree $d$] \label{S.rk1}
Assume $\chr \k = 0$ or $> d$.  Viewing $K^n$ as the dual of a vector space, the representation $V$ from Example \ref{basic.S} becomes the vector space 
 of homogeneous polynomials of degree $d$ in $n$ variables.   As $\Aut(\D, \la) = \{ \Id_\D \}$, Theorem \ref{rk1} gives: \emph{the collection of linear transformations of $V$ that preserve the set of $d$-th powers of nonzero linear forms is the compositum of $G$ and the scalar matrices in $\GL(V)$.}  Compare \cite[Th.~10.5.5]{Shaw2}.
 \end{eg}

\begin{eg}[exterior powers] \label{wedge.rk1}
Take $\Gt = \SL_n$ and $V = \wedge^d \k^n$ for some $1 \le d < n$ as in Example \ref{basic.wedge}.  Theorem \ref{rk1} gives for $d \ne n/2$: \emph{the collection of linear transformations of $V$ that preserve the set of nonzero decomposable vectors is the compositum of $\SL_n$ and the scalar transformations.}  
(In case $d = n/2$, every linear transformation that preserves the decomposable vectors is as in the previous sentence, or is the composition of such a transformation with a Hodge star operator.)  Compare \cite[3.1, 3.2]{Nemitz}, \cite{Westwick:Gr}, or \cite[Cor.~7.3]{Guralnick:LPP2}.
\end{eg}

The hypothesis on $V$ in Theorem \ref{rk1}---that $P$ is not exceptional---is weak enough that many other examples can also be treated readily.  For example, one can recover the stabilizer of the decomposable tensors  in $\ot_{i=1}^r \k^{n_i}$  as in \cite[Th.~3.8]{Westwick:tensor} (which assumes $\k$ algebraically closed).  We also have the following:

\begin{eg}[pure spinors] \label{spinors}
Take $\Gt = \Spin_{2n}$ for some $n \ge 3$ (so $\Gt$ is of type $D_n$) and take $V$ to be a half-spin representation as defined in \cite{Chev} or \cite[\S{VIII.13.4(IV)}]{Bou:g7}.  This representation is injective (i.e., $G = \Gt$) if $n$ is odd, and has kernel $\mu_2$ if $n$ is even; in this latter case, the image $G$ is called a half-spin or semi-spin group.  The minimal elements in $V$ are the \emph{pure spinors} as defined in \cite[\S3.1]{Chev}.

The representation $V$ is irreducible (regardless of the characteristic of $\k$) because it is minuscule. As $\Aut(\D, \la) = \{ \Id_\D \}$, Theorem \ref{rk1} gives: \emph{the collection of linear transformations of $V$ that preserve the set of pure spinors is the compositum of $G$ and the scalar matrices.} 
\end{eg}

\begin{eg}[minimal nilpotents] \label{nilpotents}
Let $\Gt$ be a split simple and simply connected group and take $V = \Lie(\Gt)$.  This is a Weyl module for $\Gt$, and it is irreducible if $\car \k$ is very good for $\Gt$.  The minimal elements in $\Lie(\Gt)$ are called \emph{minimal nilpotents}, cf.~\cite[4.3.3]{CMcG}.

As $\la$ is the highest root, $\Aut(\D, \la) = \Aut(\D)$.  Theorem \ref{rk1} gives that the collection of linear transformations of $\Lie(\Gt)$ that preserve the minimal nilpotents is the compositum of the adjoint group $G$, the scalar transformations, and a copy of $\Aut(\D)$.
\end{eg}

To summarize what we observed in this section:\! it is not difficult to see that the minimal elements are preserved by the normalizer of $G$ in $\GL(V)$ (Example \ref{norm.eg}).  Using Demazure's description of  the automorphism group of projective homogeneous varieties, we showed that the normalizer is exactly the group of linear transformations preserving the minimal elements (Theorem \ref{rk1}).  From this and \S\ref{norm.sec}, one can read off the group of linear transformations that preserve the minimal elements in many cases.
\section{Interlude: non-split groups} \label{nonsplit.sec}

So far, we have assumed that the group $G$ is split.  We now explain how to remove this hypothesis.  Suppose for the duration of this section that $G$ is a semisimple group over $\k$ with a faithful representation $\rho \!: G \ra \GL(V)$, and that, after base change to a separable closure $\ksep$ of $\k$, $\rho$ is irreducible or a Weyl module.

We can fix a pinning of $G \times \ksep$, a highest weight vector $\vp \in V \ot \ksep$, a parabolic $P := \Stab_{G \times \ksep}(\ksep \vp)$, and a closed orbit $\O \in \P(V) \times \ksep$ as in \S\ref{orbit.sec}.  
\begin{prop} \label{nonsplit}
The closed $G$-orbit $\O$ is defined over $\k$ and $\Stab_{\GL(V)}(\O) = N_{\GL(V)}(G)$.
\end{prop}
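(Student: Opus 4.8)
The strategy is to reduce everything to the split case, which was handled in Theorem~\ref{rk1}, by a Galois descent argument. Write $\Gamma = \Gal(\ksep/\k)$. Since $G$ is semisimple over $\k$, its base change $G_{\ksep}$ is split, and the pinning we fixed determines a $*$-action of $\Gamma$ on the based root datum; the point is that the closed orbit $\O \subseteq \P(V)\times\ksep$ is canonically attached to $(G_{\ksep}, V_{\ksep})$ in a way that does not depend on the choice of pinning, hence is $\Gamma$-stable and descends to $\k$.

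\textbf{Step 1: $\O$ is defined over $\k$.} For each $\sigma \in \Gamma$, the conjugate $\sigma(\O)$ is again a closed $G_{\ksep}$-orbit in $\P(V_{\ksep})$ (because $G$, $\rho$, and the $\P(V)$-structure are all defined over $\k$, so $\sigma$ permutes closed $G_{\ksep}$-orbits). When $V_{\ksep}$ is irreducible, Lemma~\ref{FH} says there is a \emph{unique} closed orbit, so $\sigma(\O) = \O$; when $V_{\ksep}$ is a Weyl module, $\O$ is the unique closed orbit whose lines span $V_{\ksep}$, and since $\sigma$ preserves that spanning property, again $\sigma(\O) = \O$. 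A closed subvariety of $\P(V)$ stable under all of $\Gamma$ is defined over $\k$ (Galois descent for closed subschemes of a $\k$-variety), so $\O$ descends.

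\textbf{Step 2: the normalizer equals the stabilizer.} One inclusion, $N_{\GL(V)}(G) \subseteq \Stab_{\GL(V)}(\O)$, is exactly the argument of Example~\ref{norm.eg}, applied after base change to $\ksep$: if $n$ normalizes $G$ then it sends the (unique, spanning) closed orbit to a closed orbit with the same properties, hence to itself. For the reverse inclusion, I would base-change to $\ksep$ and invoke Theorem~\ref{rk1}: note that ``$P$ is not exceptional'' is vacuous here since the excerpt's Proposition~\ref{nonsplit} carries no exceptionality hypothesis---so strictly one must either assume it, or observe that Theorem~\ref{rk1}'s conclusion $\Stab_{\GL(V)}(\O) = N_{\GL(V)}(G)$ can be taken over $\ksep$ under the same hypothesis and then descended. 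Over $\ksep$ we get $\Stab_{\GL(V_{\ksep})}(\O_{\ksep}) = N_{\GL(V_{\ksep})}(G_{\ksep})$ as $\ksep$-group schemes. Both $\Stab_{\GL(V)}(\O)$ and $N_{\GL(V)}(G)$ are closed subgroup schemes of $\GL(V)$ defined over $\k$ (the former because $\O$ is defined over $\k$ by Step~1; the latter because $G$ is), and two closed $\k$-subschemes of $\GL(V)$ that agree after the faithfully flat base change $\k \to \ksep$ coincide. Hence $\Stab_{\GL(V)}(\O) = N_{\GL(V)}(G)$ over $\k$.

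\textbf{Main obstacle.} The genuinely delicate point is Step~1---making precise that $\O$ is ``canonical'' and therefore $\Gamma$-stable. The subtlety is that the pinning of $G_{\ksep}$, the highest weight vector $\vp$, and the parabolic $P$ are \emph{not} individually $\Gamma$-stable in general (the group $G$ is not split over $\k$), yet the orbit $\O = G_{\ksep}\cdot[\vp]$ is. The clean way around this is precisely to characterize $\O$ intrinsically---via the uniqueness statements of Lemma~\ref{FH}---rather than through the pinning, so that one never needs the pinning to be rational. A secondary, more bookkeeping-level issue is the exceptionality hypothesis: since Theorem~\ref{rk1} requires $P$ non-exceptional and Proposition~\ref{nonsplit} as displayed does not repeat it, I would flag that the statement should be read with that hypothesis in force (it is a property of $P = P_{\ksep}$, hence unambiguous), after which the descent of the equality of subgroup schemes is formal.
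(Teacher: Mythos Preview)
Your proposal is correct and follows essentially the same route as the paper: use the uniqueness characterization of $\O$ from Lemma~\ref{FH} to show $\sigma(\O) = \O$ for every $\sigma \in \Gal(\ksep/\k)$, invoke Galois descent to conclude $\O$ is defined over $\k$, and then observe that both $N_{\GL(V)}(G)$ and $\Stab_{\GL(V)}(\O)$ are $\k$-subgroup schemes that coincide over $\ksep$ by Theorem~\ref{rk1}. The paper's proof is terser but identical in substance, and your remark that the non-exceptionality hypothesis of Theorem~\ref{rk1} is implicitly in force is accurate---the paper simply cites Theorem~\ref{rk1} without restating it.
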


\begin{proof}
For $\s \in \Gal(\ksep/k)$, the action of $G$ on $V$ commutes with $\s$, so $\s(\O)$ is a closed $G$-orbit in $\P(V)$ whose elements span $V \ot \ksep$, ergo $\s(\O(\ksep)) = \O(\ksep)$.  By the Galois criterion for rationality \cite[AG.14.4]{Borel}, $\O$ is defined over $\k$.  The group schemes $N_{\GL(V)}(G)$ and $\Stab_{\GL(V)}(\O)$ are both defined over $\k$, and the claimed equality is by Theorem \ref{rk1}.
\end{proof}

Suppose now that the representation $V \ot \ksep$ is as in Table \ref{1dim}.  We will prove in Propositions \ref{RRS.prop}, \ref{FTS.prop}, and \ref{dieu.prop} that
\[
\Stab_{\GL(V)}(f) \subset \Stab_{\GL(V)}(\{ f = 0 \}) = N_{\GL(V)}(G)
\]
as group schemes over $\ksep$, and it follows from Proposition \ref{nonsplit} that these relationships also hold over $\k$.

\section{Representations with a one-dimensional ring of invariants} \label{LPP.1}

We have completed our study of linear transformations that preserve minimal elements, and we now move on to considering linear preserver problems (LPPs) as described in the introduction.  
We maintain the notation of section \ref{orbit.sec}, so $\Gt$ is a split semisimple algebraic group over the field $\k$ and $\rho \!: \Gt \to \GL(V)$ is an irreducible representation or a Weyl module.  The $d = 2$ cases from Examples \ref{basic.wedge} and \ref{basic.S} are  special in that the ring $\k[V]^G$ of $G$-invariant polynomial functions on $V$ equals $\k[f]$ for a nonconstant homogeneous polynomial $f$.  (We say that $f$ is $G$-invariant if every $g \in G(\kalg)$ preserves $f$, where we use the typical algebraist's definition that an element $g \in G(\k)$ \emph{preserves} $f$ if $f \circ g = f$ as polynomials.)

The basic facts about this situation are given by the following proposition, which is well known for $\k = \C$, see e.g.\ \cite[Prop.~12]{Popov:14}. The quotient $V/G$ is defined to be the variety $\Spec \k[V]^G$.

\begin{prop} \label{dim1}
The following are equivalent:
\begin{enumerate}
\item $\dim V/G = 1$.
\item There is a dense open $G(\kalg)$-orbit in $\P(V)(\kalg)$ but not in $V \ot \kalg$.
\item $\k[V]^G = \k[f]$ for some homogeneous $f \in \k[V] \setminus \k$.
\item $V/G$ is isomorphic to the affine line $\aff^1$.
\end{enumerate}
\end{prop}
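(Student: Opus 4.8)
The plan is to prove the cycle of implications $(4) \Rightarrow (3) \Rightarrow (1) \Rightarrow (2) \Rightarrow (3)$, or some convenient rearrangement, leaning on the fact that $G$ is semisimple and acts linearly on $V$, so that $\k[V]^G$ is a graded $\k$-algebra with $\k[V]^G_0 = \k$, and on the homogeneity of the $G$-action with respect to the scaling $\Gm$-action on $V$. The implication $(4) \Rightarrow (1)$ is trivial since $\dim \aff^1 = 1$, and $(3) \Rightarrow (4)$ is immediate because $\k[f] \cong \k[t]$ as a graded $\k$-algebra (here $f$ is transcendental over $\k$ because $\dim V/G \ge 1$: a dominant quotient map $V \to \aff^1$ cannot factor through a point, using that $V$ is a positive-dimensional irreducible representation of the semisimple group $G$ acting without a dense orbit). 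So the real content is the equivalence of $(1)$, $(2)$, and $(3)$.

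The key step is $(1) \Rightarrow (3)$. Here I would argue as follows. Since $G$ is semisimple and linearly reductive away from bad primes---and in any case reductive over any field, with $\k[V]^G$ finitely generated (this is Nagata's theorem; for the characteristics we allow one can cite the standard invariant-theory references)---$\k[V]^G$ is a finitely generated graded domain of Krull dimension $1$ with $\k[V]^G_0 = \k$. A graded domain of dimension $1$ over an algebraically closed subfield, finitely generated and with degree-zero part equal to the ground field, has normalization a polynomial ring in one variable; the subtlety is getting $\k[V]^G$ itself, not merely its normalization, to be $\k[f]$. For this I would invoke that the quotient map $\pi \!: V \to V/G$ has the property that the closed $G$-orbit $\O$ (more precisely its affine cone $\Xb$) maps to a single point, namely the origin of $\aff^1$, and that $\pi$ is flat over the complement of that point since all fibers there are single orbits of the same dimension; combined with normality of $V$ this forces $V/G$ to be smooth, hence---being a $1$-dimensional graded domain with $\k[V/G]_0 = \k$---isomorphic to $\aff^1$, which gives both $(4)$ and $(3)$ simultaneously. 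The generator $f$ of $\k[V]^G$ can then be taken homogeneous of minimal positive degree.

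For $(1) \Leftrightarrow (2)$: the scaling $\Gm$-action commutes with $G$, so the $G(\kalg)$-orbits in $V \ot \kalg$ are unions of $\Gm$-orbits over $G(\kalg)$-orbits in $\P(V)(\kalg)$, and $\dim V/G = 1$ exactly says the generic $G(\kalg)$-orbit in $V \ot \kalg$ has codimension $1$, i.e. its image in $\P(V)(\kalg)$ is dense (the extra dimension being the scaling direction) while no orbit in $V \ot \kalg$ is dense (else $\dim V/G = 0$). To make "generic orbit" precise I would use that for a reductive group acting on an affine variety the maximal orbit dimension is attained on a dense open set, so $\dim V/G = \dim V - \max_x \dim G x$; combined with the $\Gm$-compatibility this yields $(1) \Leftrightarrow (2)$ cleanly. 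Finally $(3) \Rightarrow (1)$ is immediate from $\dim \k[f] = 1$.

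The main obstacle I anticipate is the characteristic-free treatment of $(1) \Rightarrow (3)$: in positive characteristic $G$ need not be linearly reductive, so I cannot simply quote a Reynolds operator, and I must be careful that "$1$-dimensional graded domain with $\k[V]^G_0 = \k$" really forces $\k[V]^G = \k[f]$ rather than, say, $\k[t^2, t^3]$. Ruling out such non-normal possibilities is exactly what the flatness/normality argument above is for, and spelling it out carefully (or, alternatively, citing the precise statement from \cite{RRS} or \cite{Popov:14} that applies in our setting of a prehomogeneous space of parabolic type) is where the work lies; the other implications are formal.
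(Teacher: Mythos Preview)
Your overall logical structure is sound and close to the paper's, but the execution of the two substantive steps differs from the paper and contains fixable gaps.

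For $(1)\Rightarrow(3)$, the paper does not argue via flatness or smoothness of the quotient: it simply cites Theorem~4 of \cite{Schinzel}, which gives directly that a graded sub-$\k$-algebra of $\k[V]$ of transcendence degree~$1$ is $\k[f]$ for a single homogeneous~$f$. Your flatness argument, as written, does not work: ``all fibers are single orbits of the same dimension'' neither follows from $(1)$ alone nor implies flatness, and flatness of $\pi$ together with normality of $V$ does not force $V/G$ to be smooth. What \emph{does} work, and is much simpler than what you wrote, is the elementary fact that the invariant subring of a normal domain is normal: if $x\in\mathrm{Frac}(\k[V]^G)$ is integral over $\k[V]^G$, then $x\in\k[V]$ by normality of the polynomial ring, and $x$ is visibly $G$-fixed, so $x\in\k[V]^G$. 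Thus $\k[V]^G$ is a one-dimensional normal (hence regular) Noetherian graded domain with degree-zero part $\k$; its irrelevant ideal is then principal, generated by a homogeneous~$f$, and one gets $\k[V]^G=\k[f]$ immediately. So your instinct that normality resolves the $\k[t^2,t^3]$ worry is correct, but the route to normality is one line, not a flatness argument. You do still need finite generation (for Noetherianity), so your appeal to Nagata--Haboush is appropriate; the paper sidesteps this entirely via the Schinzel citation.

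For the equivalence with $(2)$, the paper does not argue $(1)\Leftrightarrow(2)$ directly as you do; it proves $(2)\Rightarrow(1)$ by a fiber-dimension argument (as you sketch) and $(3)\Rightarrow(2)$ by using the homogeneous invariant $f$ explicitly: since $f$ separates generic orbits and is homogeneous, the image in $\P(V)$ of $f^{-1}(\kalg^\times)\cap U$ is a single open orbit. Your direct $(1)\Rightarrow(2)$ has a small hole: from ``generic $G$-orbit in $V$ has codimension~$1$'' you cannot immediately conclude its image in $\P(V)$ is dense, because a priori the generic orbit could be a cone (stabilizer of $[x]$ strictly larger than stabilizer of $x$). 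Ruling this out requires exactly the existence of a non-constant homogeneous invariant, which you have once $(1)\Rightarrow(3)$ is in hand---so your cycle still closes, but the $(1)\Leftrightarrow(2)$ step is not as independent of $(3)$ as your write-up suggests.
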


\begin{proof}
Assuming (1), Theorem 4 in \cite{Schinzel} gives that the fraction field of $\k[V]^G$ is $\k(f)$ for some homogeneous $f$, and as in the proof of that theorem we deduce (3).
(3) implies (4) because 
the polynomial $f$ (as an element of $\k[V]$) is transcendental over $\k$, and (4) trivially implies (1).

Now suppose (2).   Recall that 
 there is a $G$-invariant  dense subset $U$ of $V \ot \kalg$ such that two elements of $U$ have the same image in $V/G$ iff they are in the same $G(\kalg)$-orbit.  So, if  $L$ is a line in $V$ that is in the open orbit in $\P(V)(\kalg)$, then $G(\kalg)\cdot L$ contains a nonempty open subset of $V$, hence contains a nonempty open subset of $U$; it follows that the map $L \ra V/G$ is dominant, hence that $\dim V/G$ is 0 or 1.  But for an orbit $X$ of maximal dimension in $V$, we have $\dim V/G = \dim V - \dim X$, so if $\dim V/G$ is 0, there is a dense orbit in $V \ot \kalg$.  (1) is proved.

Finally suppose (3) holds; we prove (2).   The map $f \!: V \ot \kalg \ra \kalg$ is $G$-invariant and nonconstant, so  there is no dense orbit in $V \ot \kalg$.  As $f$ is homogeneous and separates the $G(\kalg)$-orbits in $U$, it follows that the dense image of $f^{-1}(\kalg^\times) \cap U$ in $\P(V)$ is a single (open) $G(\kalg)$-orbit.\end{proof}

Representations where the conditions in the proposition hold are closely related to the prehomogeneous vector spaces studied in \cite{SK}, the $\theta$-groups studied by Vinberg as in \cite{PoV}, and the internal Chevalley modules from \cite{ABS}.

All pairs $(G, V)$ with one-dimensional ring of invariants, $G$ simple, and $\k = \C$ are listed on pages 260--262 of \cite{PoV}.  In sections \ref{RRS.sec} and \ref{FTS.sec}, we will solve the LPP for the representations listed in Table \ref{1dim}.  That table does not include all possibilities from \cite{PoV}, and we make some remarks about the remaining entries in section \ref{ignore}.

\subsection*{The representations in Table \ref{1dim} are irreducible} We now verify that the representations in Table \ref{1dim} are irreducible.  If $\k = \C$ then this is well known, so we may assume that $\car \k$ is a prime.  We fix a particular $(G, V)$ from Table \ref{1dim} for consideration and write $C \subseteq \{ 2, 3\}$ for the set of excluded characteristics for that line of the table.  There is a simply connected split group $H$ (of type $C_n$, $D_n$, $E_7$, $A_{2n-1}$, $G_2$, $F_4$, $E_6$, $E_7$, or $E_8$ respectively) and a homomorphism $\ell \!: \Gm \ra H$ such that the centralizer of $\im \ell$ in $H$ (which is reductive) has derived subgroup $\Gt$.  Also, the 1-eigenspace of $\ell$ in $\Lie(H)$---i.e., the subspace 
\[
\Lie(H)_1 := \{ x \in \Lie(H) \mid \text{$\Ad \ell(t) \cdot x = tx$ for all $t \in \Gm$} \}
\]
--- is isomorphic to $V$ in such a way that the adjoint action $\Gt \ra \GL(\Lie(H)_1)$ is identified with the representation $\rho \!: \Gt \ra \GL(V)$.  It follows in particular that $V$ is an irreducible representation of $G$ and that there is an open $G$-orbit in $\P(V)$, by \cite{ABS}, using that $\car \k \not\in C$.
As $V$ is not exceptional in the sense of Definition \ref{dem.def}, Theorem \ref{rk1} applies and $N_{\GL(V)}(G) = \Stab_{\GL(V)}(\O)$.

\subsection*{The polynomial $f$} The machinery in the previous paragraph also gives more.  Let $R$ be a subring of $\Q$ such that the set $C$ of excluded primes is contained in $R^\times$.  There is a group $H$ and homomorphism $\ell$ defined over $R$ so that their base change to $K$ is as in the preceding paragraph.  The paper \cite{Sesh:GR} shows that the quotient variety $V/\Gt$ is defined over $R$.  We have
\[
\dim (V \times K) / (\Gt \times K) = \dim (V/\Gt) \times K = \dim (V/\Gt) \times \C = \dim (V \times \C) / (\Gt \times \C),
\]
where the 
last number is 1 by our choice of $(G, V)$, see \cite{PoV} or \cite{Kac:nil}.  That is, the equivalent conditions of Proposition \ref{dim1} hold.  

One can choose an $f$ that generates $\C[V]^G$ and comes from $R[V]^G$; if the image of this $f$ in $\k[V]$ is nonzero, then it generates $\k[V]^G$.  Therefore, to determine a formula for $f$ or its degree, it suffices to do so over $\C$.  The degree can be looked up in \cite{PoV} or \cite[Table II]{Kac:nil} or can be calculated using \cite[p.~65, Prop.~15]{SK}.

\begin{eg}[binary cubics] \label{binarycubics.f}
Suppose $\car \k \ne 2, 3$ and consider the vector space $V$ of binary cubic forms; it is the irreducible representation $V = S^3((K^2)^*)$ of $G = \SL_2$ from line 5 of Table \ref{1dim}.
In the notation of the three preceding paragraphs, one takes $H$ split of type $G_2$ and $R = \Z[\frac12, \frac13]$.  The maps that send $(x,y)$ to $x^3, x^2y, xy^2, y^3$ are a basis for the $R$-module $V$, and a formula for $f$ is given in \cite[\S46, (10)]{Weber} or \cite[(14.33)]{Gurevich}:
\[
f(a_0x^3 + a_1x^2y+ a_2xy^2 + a_3y^3) = a_1^2 a_2^2 + 18a_0a_1a_2a_3 - 4a_0a_2^3-4a_1^3a_3 - 27 a_0^2 a_3^2.
\]
This is the \emph{discriminant} of the cubic form.  

We remark that binary cubic forms (and $f$) can be identified with cubic algebras (and their discriminant algebras) as described in \cite[\S4]{GanGrossSavin} or \cite{HoffMorales}.  Furthermore, this representation is irreducible also in case $\car \k = 2$ by Steinberg's tensor product theorem \cite[II.3.17]{Jantzen}.  We ignore these variations below.
\end{eg}

\begin{eg} \label{blackholes.f}
The group $\Gt = \SL_2 \times \SO_n$ for $n \ge 4$ acts naturally on $V = \k^2 \ot \k^n$.  If $n$ is odd, assume $\car \k \ne 2$.  Then $V$ is irreducible and in the notation of this section, one takes $H = \Spin_{n+4}$ and $R = \Z$ or $\Z[\frac12]$. We may identify $V$ with the 2-by-$n$ matrices so that $\Gt$ acts via $\rho(g_1, g_2) X = g_1 X g_2^t$.  There is a symmetric $S \in \GL_n(\k)$ so that the $\k$-points of $\SO_n$ are the $g_2 \in \SL_n(\k)$ such that $g_2^t S g_2 = S$.  It follows that the polynomial map $f \!: V \ra \k$ defined by
$f(X) := \det(XSX^t)$ is invariant under $\Gt$.  It generates $\C[V]^G$ as argued in \cite[pp.~109, 110]{SK}, so $\k[V]^G = \k[f]$.  

In the smallest case $n = 4$, one can equivalently take $\Gt = \SL_2 \times \SL_2 \times \SL_2$ and $V = \k^2 \ot \k^2 \ot \k^2$.  In that case, $f$ is Cayley's hyperdeterminant defined in \cite{Cayley:hyper}.  It appears, for example, in quantum information theory to measure the entanglement of a 3-qubit system \cite{MiyakeWadati}.
\end{eg}
\section{Transformations that preserve minimal elements and $f$} \label{LPP.soln}

\begin{table}[bp]
\[
\begin{array}{|r|ccc|cc|c|}\hline
\#&\Gt\upstrut{3}&V&\dim V&f&\deg f&\chr \k \\ \hline
1&\SL_n&S^2(\k^n)&\binom{n}{2}+n&\det&n&\ne 2\\
2&\SL_n\ \text{($n$ even, $n \ge 4$)}&\wedge^2(\k^n)&\binom{n}2&\Pf&n/2&\\
3&\E_6^\sc&\text{minuscule}&27&\text{see \cite[p.~358]{Jac:J}}&3& \\
4&\SL_n \times \SL_n&M_n&n^2&\det&n& \\ 
5&\SO_n\ (n \ge 3)&\k^n&n&&2&\left\{\parbox{0.5in}{$\ne 2$ if $n$ odd}\right. \\ \hline
6&\SL_2&\text{binary cubics}&4&\text{see Example \ref{binarycubics.f}}&4&\ne 2, 3\\
7&\SL_6&\wedge^3 (\k^6)&20&\text{see \eqref{SL6.f}}&\vdots&\vdots\\
8&\Sp_6&\wedge^3_0(\k^6)&14&\text{see \eqref{SL6.f}}&\vdots&\vdots\\
9&\Spin_{12}&\text{half-spin}&32&\text{see \cite[p.~1012]{Igusa}}&\vdots&\vdots\\
10&\E_7^\sc&\text{minuscule}&56&\text{see \cite{Ferr:strict}, \cite{Brown:E7}}&\vdots&\vdots \\
11&\SL_2 \times \SO_n\ (n \ge 4)&\k^2 \ot \k^n&2n&\text{see Example \ref{blackholes.f}}&4&\ne 2, 3 \\ \hline
\end{array}
\]
\caption{Some representations $V$ of groups $\Gt$ so that $f$ generates the ring of all polynomials on $V$ that are invariant under $\Gt$.  In these cases, we calculate the subgroup $\Stab_{\GL(V)}(f)$ of $\GL(V)$.} \label{1dim}
\end{table}

We maintain the assumptions of the previous sections, and from here on we assume furthermore that $V$ is irreducible and the ring $\k[V]^G$ of $G$-invariant polynomial functions on $V$ is generated by a non-constant homogeneous element that we denote by $f$.  
Since $V$ is an irreducible representation of $G$ and $f$ is not constant, the subspace consisting of $r \in V$ such that $f(r + v) = f(v)$ for all $v \in V$ must be zero.  It follows easily from this that every linear transformation preserving $f$ is \emph{invertible}, as noted in \cite[\S1]{Wa:inv}, hence 
the collection of linear transformations $\phi$ of $V$ that preserve $f$ is the group of $\k$-points of the closed sub-group-scheme $\Stab_{\GL(V)}(f)$ of $\GL(V)$.  We call it the \emph{preserver} of $f$; the classical linear preserver problem is to determine the $\k$-points of this group.

\begin{lem} \label{aclosed.id}
If $\k$ is algebraically closed, then 
\[
N_{\GL(V)}(G)^\circ(\k) \cap \Stab_{\GL(V)}(f)(\k) = G(\k) \cdot \mu_{\deg f}(\k).
\]
\end{lem}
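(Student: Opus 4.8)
The plan is to compute the intersection using the explicit description of $N_{\GL(V)}(G)^\circ$ coming from Proposition \ref{AutD}. Since $\k$ is algebraically closed, Corollary \ref{norm.gen} tells us that $N_{\GL(V)}(G)(\k)$ is generated by $G(\k)$, $\kx$, and $\gamma(\Aut(\D,\la))$; restricting to the identity component, we have $N_{\GL(V)}(G)^\circ(\k) = \kx \cdot G(\k)$ (the scalars together with $G$ compose to give the whole connected reductive group with semisimple part $G$ and central torus $\Gm$, as recorded in Proposition \ref{AutD}). So an element of $N_{\GL(V)}(G)^\circ(\k)$ may be written $c\,g$ with $c \in \kx$ and $g \in G(\k)$.

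Next I would test when such an element $c\,g$ preserves $f$. Because $f$ is $G$-invariant, $f \circ g = f$, so for any $v$ we have $f((c\,g)(v)) = f(c\,(g v)) = c^{\deg f} f(gv) = c^{\deg f} f(v)$, using that $f$ is homogeneous of degree $\deg f$. Hence $c\,g$ preserves $f$ if and only if $c^{\deg f} f(v) = f(v)$ for all $v$, i.e.\ (since $f$ is nonconstant, so $f \not\equiv 0$) if and only if $c^{\deg f} = 1$, that is $c \in \mu_{\deg f}(\k)$. This shows the left-hand side is contained in $\mu_{\deg f}(\k)\cdot G(\k)$, and the reverse inclusion is immediate from the same computation since every element of $\mu_{\deg f}(\k) \subset \kx$ lies in $N_{\GL(V)}(G)^\circ(\k)$ and preserves $f$.

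The one genuine subtlety — and the step I expect to be the main obstacle — is the passage from the \emph{generated-by} statement of Corollary \ref{norm.gen} to the clean description $N_{\GL(V)}(G)^\circ(\k) = \kx\cdot G(\k)$, and more precisely the claim that this product is \emph{direct enough} that the preceding computation is unambiguous. Concretely one must be careful that scalars $c$ and $c'$ giving the same transformation differ by a unit acting trivially on $V$, which forces $c = c'$ (as $V \neq 0$); this is why $c^{\deg f}$ is well-defined and the condition $c^{\deg f}=1$ makes sense. Equivalently, I would invoke that $N_{\GL(V)}(G)^\circ \cong ((\Gm \times G)/Z)$ from Proposition \ref{AutD}, identify $\Stab_{\GL(V)}(f) \cap N_{\GL(V)}(G)^\circ$ with the kernel of the character $(c,g) \mapsto c^{\deg f}$ on this quotient, and observe that this kernel is exactly the image of $\mu_{\deg f} \times G$, whose group of $\k$-points (using that $\k$ is algebraically closed and the relevant cohomology vanishes, or simply that $\mu_{\deg f}(\k)$ surjects onto $\mu_{\deg f}/(\mu_{\deg f}\cap Z)$ since $\k$ is algebraically closed) is $\mu_{\deg f}(\k)\cdot G(\k)$. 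Everything else is the routine homogeneity computation above.
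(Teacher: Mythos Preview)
Your argument is correct and is exactly the paper's approach: write an element of $N_{\GL(V)}(G)^\circ(\k)$ as $gz$ with $g\in G(\k)$, $z\in\kx$ (via Corollary \ref{norm.gen} / Proposition \ref{AutD}) and observe that $f$-invariance forces $z^{\deg f}=1$. The ``subtlety'' you flag is not really an obstacle---the homogeneity computation is independent of the chosen decomposition, so the condition $z^{\deg f}=1$ (and hence membership in $G(\k)\cdot\mu_{\deg f}(\k)$) is unambiguous; the paper simply omits this discussion.
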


\begin{proof}
By Corollary \ref{norm.gen}, every element of $N_{\GL(V)}(G)^\circ(\k)$ is a product $gz$ for some $g \in G(\k)$ and $z \in \kx$, and the claim is clear.
\end{proof}

In the notation of \eqref{Lt.hyp}, the equation 
\[
f(\rt(g)v) = \chi(g) f(v) \quad \text{for all $v \in V \ot \kalg$}
\]
defines a homomorphism $\chi \!: \Lt \ra \Gm$.  Corollary \ref{Lt.norm} immediately gives:
\begin{lem} \label{Lt.id}
Assuming \eqref{Lt.hyp}, the elements of $N_{\GL(V)}(G)^\circ(\k)$ that preserve $f$ are $\rt(g)$ for $g \in \Lt(\k)$ such that $\chi(g) = 1$. $\hfill\qed$
\end{lem}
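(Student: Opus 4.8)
The plan is to deduce the statement directly from Corollary~\ref{Lt.norm} together with the definition of the character $\chi$; indeed this is why the text says the lemma follows immediately. The two ingredients are a description of $N_{\GL(V)}(G)^\circ(\k)$ as the image of $\rt$, and the translation of ``preserves $f$'' into a condition on $\chi$.

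First I would record that $\rt$ maps $\Lt(\k)$ onto $N_{\GL(V)}(G)^\circ(\k)$. This is exactly what is established in the proof of Corollary~\ref{Lt.norm}: the exact sequence $1 \ra \ker\rt \ra \Lt \xrightarrow{\rt} N_{\GL(V)}(G)^\circ \ra 1$ combined with the vanishing of $H^1(\k,\ker\rt)$ (Hilbert~90, since $\ker\rt$ is a split torus) forces the map on $\k$-points to be surjective. The finite subgroup $\gamma(\Aut(\D,\la))$ occurring in Corollary~\ref{Lt.norm} contributes only to the component group, so it does not enter here. Hence every element of $N_{\GL(V)}(G)^\circ(\k)$ is of the form $\rt(g)$ with $g \in \Lt(\k)$.

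Next I would determine which of these preserve $f$. By definition $\rt(g)$ preserves $f$ precisely when $f \circ \rt(g) = f$ as polynomial functions; since $f$ has coefficients in $\k$ and $\rt(g)$ is $\k$-rational, this holds iff the same identity holds after base change to $\kalg$. On $V \ot \kalg$ the defining relation of $\chi$ reads $f(\rt(g)v) = \chi(g)f(v)$ for all $v$, so $f\circ\rt(g)=f$ is equivalent to $\bigl(\chi(g)-1\bigr) f = 0$ in $\kalg[V]$. As $f$ is a nonzero element of the integral domain $\kalg[V]$, this is in turn equivalent to $\chi(g)=1$; the converse implication is immediate. Combining with the previous paragraph yields exactly the elements claimed.

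The main point is that there is essentially no obstacle beyond bookkeeping: the only steps requiring a word are (i) the surjectivity of $\rt$ onto $N_{\GL(V)}(G)^\circ(\k)$, which is already contained in the proof of Corollary~\ref{Lt.norm} via Hilbert~90, and (ii) the passage from $\chi(g)f = f$ to $\chi(g)=1$, which uses only that $f \ne 0$.
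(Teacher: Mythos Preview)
Your proposal is correct and is exactly the argument the paper has in mind: the lemma is stated with a $\qed$ symbol and no proof, prefaced by ``Corollary~\ref{Lt.norm} immediately gives,'' and you have simply unpacked that sentence---surjectivity of $\rt$ on $\k$-points from the exact sequence and Hilbert~90, then the trivial equivalence $f\circ\rt(g)=f \iff \chi(g)=1$ using $f\ne 0$.
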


As to the non-identity component of $N_{\GL(V)}(G)$, we have:

\begin{lem} \label{AutD.f}
There is a homomorphism $\phi \!: \Aut(\D, \la) \ra \Gm$ such that $f(\gamma(\pi) v) = \phi(\pi) f(v)$ for all $\pi \in \Aut(\D, \la)$ and $v \in V \ot E$ for every extension $E$ of $\k$.
\end{lem}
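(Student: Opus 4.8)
The plan is to show that composing $f$ with $\gamma(\pi)$ merely rescales $f$, and to take $\phi(\pi)$ to be the scaling factor. First I would fix $\pi \in \Aut(\D,\la)(\k)$; by Proposition \ref{AutD}, $\gamma(\pi) \in N_{\GL(V)}(G)(\k)$, so $\gamma(\pi)$ normalizes $G$. I claim the polynomial $f \circ \gamma(\pi)$ is again $G$-invariant: for $g \in G(\kalg)$,
\[
(f \circ \gamma(\pi)) \circ g = f \circ \bigl(\gamma(\pi)\, g\, \gamma(\pi)^{-1}\bigr) \circ \gamma(\pi) = f \circ \gamma(\pi),
\]
since $\gamma(\pi)\, g\, \gamma(\pi)^{-1} \in G(\kalg)$ and $f$ is $G$-invariant. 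Hence $f \circ \gamma(\pi) \in \k[V]^G = \k[f]$.

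Next, since $\gamma(\pi)$ is an invertible linear map on $V$, the polynomial $f \circ \gamma(\pi)$ is homogeneous of degree $\deg f$ and nonzero. The degree-$(\deg f)$ homogeneous component of the graded ring $\k[f]$ is the line $\k f$ (as $f$ is non-constant), so there is a unique scalar $\phi(\pi) \in \kx$ with $f \circ \gamma(\pi) = \phi(\pi)\, f$ in $\k[V]$. Because $\gamma(\pi)$ and $f$ are both defined over $\k$, this is an identity of polynomials with $\k$-coefficients; it therefore persists after any base change $\k \to E$, and evaluating at $v \in V \ot E$ gives the asserted formula $f(\gamma(\pi) v) = \phi(\pi) f(v)$.

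Finally I would check that $\phi$ is multiplicative. Since $\gamma$ is a homomorphism (Proposition \ref{AutD}), for $\pi_1, \pi_2 \in \Aut(\D,\la)$ one computes
\[
\phi(\pi_1\pi_2)\, f = f \circ \gamma(\pi_1) \circ \gamma(\pi_2) = \phi(\pi_1)\bigl(f \circ \gamma(\pi_2)\bigr) = \phi(\pi_1)\phi(\pi_2)\, f,
\]
so $\phi(\pi_1\pi_2) = \phi(\pi_1)\phi(\pi_2)$. As $\Aut(\D,\la)$ is finite, $\phi$ takes values in roots of unity, so it is a homomorphism $\Aut(\D,\la)(\k) \to \kx$, equivalently a homomorphism of group schemes $\Aut(\D,\la) \to \Gm$. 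The argument has no real obstacle; the only points needing care are that $f \circ \gamma(\pi)$ is genuinely $G$-invariant (for which one uses that $\gamma(\pi)$ normalizes $G$, not merely that it stabilizes $\O$) and that the hypothesis $\k[V]^G = \k[f]$ forces the relevant graded piece to be one-dimensional.
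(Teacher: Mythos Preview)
Your proposal is correct and follows essentially the same approach as the paper: show that $f\circ\gamma(\pi)$ is $G$-invariant (using that $\gamma(\pi)$ normalizes $G$), then invoke $\k[V]^G=\k[f]$ together with a degree comparison to conclude $f\circ\gamma(\pi)=\phi(\pi)f$ for some $\phi(\pi)\in\kx$. Your version is in fact slightly more complete, since you explicitly verify that $\phi$ is multiplicative, a point the paper leaves to the reader.
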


\begin{proof}
For a fixed $\pi \in \Aut(\D, \la)$, define $f_\pi \in \k[V]$ via $f_\pi(v) := f(\gamma(\pi) v)$.  As
\[
f_\pi(gv) = f(\gamma(\pi) g v) = f((i(\pi)(g)) \gamma(\pi) v) = f_\pi(v) \quad \text{for all $v \in V \ot E$,}
\]
and $f_\pi$ and $f$ are homogeneous of the same degree in $\k[V]^G$, we deduce that $f_\pi = \phi(\pi) f$ for some scalar $\phi(\pi) \in \k$.  As $f$ is nonzero on $V$, $f_\pi$ is also nonzero, hence $\phi(\pi) \in \kx$.
\end{proof}

\section{Lines 1--5 of Table \ref{1dim}} \label{RRS.sec}

For each of the polynomials $f$ appearing in lines 1--5 of Table \ref{1dim}, we will determine the linear transformations of $V$ that preserve $f$.  We prove the following, which is a formal version of an imprecise observation made in \cite[p.~840]{Marcus:AMMsurv}.

\begin{prop} \label{RRS.prop}
For the representations in lines 1--5 of Table \ref{1dim}, every linear transformation of $V$ that preserves $f$ belongs to $N_{\GL(V)}(G)(\k)$.
\end{prop}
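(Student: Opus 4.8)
The plan is to show, for each of lines 1–5, that the preserver of $f$ is contained in $N_{\GL(V)}(G)$ by relating the hypersurface $\{f = 0\}$ to the closed orbit $\O$ and invoking Theorem~\ref{rk1}. The key observation is that in each of these five cases the zero locus of $f$ \emph{is} the projective closure of the set of non-minimal vectors (or equivalently, the complement of the dense open orbit in $\P(V)$), so a linear transformation preserving $f$ preserves $\{f=0\}$ and hence maps minimal elements to minimal elements, at least after we argue that minimals are exactly the nonzero vectors \emph{not} lying on $\{f = 0\}$ together with some further constraint. Concretely: for line~1, $V = \Sym_n$ and $f = \det$, a symmetric matrix lies on $\{f = 0\}$ iff it has rank $< n$; for line~2, $V = \wedge^2 \k^n$ and $f = \Pf$, an alternating matrix lies on $\{\Pf = 0\}$ iff it has rank $< n$; for line~4, $V = M_n$, same story with $\det$; for line~5, $V = \k^n$ and $f$ the quadratic form, $\{f = 0\}$ is the quadric and the minimal elements of $\P(V)$ are exactly its points (since the closed $\SO_n$-orbit is that quadric); for line~3, the cubic form on the $27$-dimensional $\E_6$-module has $\{f = 0\}$ equal to the secant variety / the closure of the non-open orbits, with the minimal elements being the ``rank one'' elements.

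**The main technical point** is that in lines 1, 2, 4 the minimal elements (rank-$1$ symmetric, rank-$2$ alternating, rank-$1$ matrices) are \emph{not} simply the complement of $\{f = 0\}$—that complement is the set of full-rank elements. So I cannot directly say ``$\phi$ preserves $f$ $\Rightarrow$ $\phi$ preserves minimals.'' Instead I will run the argument one rank-stratum at a time. The orbits of $G$ on $V \ot \kalg$ are indexed by rank, $\overline{\O_1} \subset \overline{\O_2} \subset \cdots \subset \overline{\O_{\max}} = V$, and the singular locus of the hypersurface $\{f = 0\}$ recovers the next smaller stratum: e.g.\ for $\det$ on $M_n$, $\operatorname{Sing}\{\det = 0\}$ is the locus of rank $\le n-2$ matrices, and iterating, the whole flag of determinantal varieties is canonically built from $\{f = 0\}$ by repeatedly taking singular loci. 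Since any $\phi \in \Stab_{\GL(V)}(f)$ preserves $\{f = 0\}$ and commutes with the formation of singular locus, $\phi$ preserves every determinantal variety, in particular the smallest one $\overline{\O_1} = \Xb$, hence $\phi$ preserves minimals. For line~5 (quadric) and line~3 (the $\E_6$ cubic) one can argue more directly: the quadric \emph{is} the closed orbit, and for $\E_6$ the zero locus of the cubic and its singular locus give the non-open orbit closures, with the smallest, $\Xb$, the cone over $\O$; alternatively for $\E_6$ one invokes the known orbit structure (three orbits on $\P(V)$, cf.\ the references in the table) so that $\{f=0\}$ has singular locus exactly the cone over $\O$.

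**Then**, with $\Stab_{\GL(V)}(f) \subseteq \Stab_{\GL(V)}(\O)$ established in each case, Theorem~\ref{rk1} finishes the job provided $P$ is not exceptional. I would check case by case that none of lines 1–5 puts us in the exceptional list of Definition~\ref{dem.def}: line~1 is type $A_{n-1}$ with $P$ corresponding to $\omega_2$ on the $d\omega_1$-side—wait, more carefully, $V = S^2\k^n$ has highest weight $2\omega_1$, so $P$ has Levi of type $A_{n-2}$ inside $A_{n-1}$, which is not among the exceptions (the $B_\ell$/$A_{\ell-1}$ and $C_\ell$/$C_{\ell-1}$ and $G_2$ cases); line~2 is $A_{n-1}$ with $P \leftrightarrow \omega_2$, again not exceptional; line~3 is $E_6$, no $E_6$ parabolic is exceptional; line~4 is $A_{n-1}\times A_{n-1}$, neither factor exceptional; line~5 is $\SO_n$, i.e.\ type $B_\ell$ or $D_\ell$—here I must be slightly careful since $B_\ell$ with Levi of type $A_{\ell-1}$ \emph{is} exceptional, but the natural representation $\k^n$ of $\SO_{2\ell+1}$ has highest weight $\omega_1$, whose stabilizer $P$ has Levi of type $B_{\ell-1}$, not $A_{\ell-1}$, so it is fine; similarly for $D_\ell$.

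**The hard part** will be the rank-stratification / singular-locus bookkeeping in lines 1, 2, 4—making precise and uniform the claim that iterating ``take the singular locus of the hypersurface'' reconstructs the full determinantal flag down to $\Xb$, and handling characteristic subtleties (the $\car \k \ne 2$ hypothesis in line~1 for $\det$ on symmetric matrices is exactly what keeps $\Sym_n$ from being a Weyl module with the wrong orbit structure). For line~3 the obstacle is instead simply citing the correct statement of the $\E_6$ orbit structure and the identification of $\{f=0\}$ and its singular locus; I expect this is cleanest via the references already in the table ($\cite{Jac:J}$) plus the general fact (from the internal-Chevalley-module machinery of $\cite{ABS}$ invoked earlier in the excerpt) that there is a dense open $G$-orbit in $\P(V)$ whose complement is $\{f = 0\}$ up to the cone over $\O$. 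Once those geometric facts are in hand, the logical skeleton—preserve $f$ $\Rightarrow$ preserve $\{f=0\}$ $\Rightarrow$ (via singular loci) preserve $\O$ $\Rightarrow$ (Theorem~\ref{rk1}) normalize $G$—is short and uniform.
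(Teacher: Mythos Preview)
Your approach is correct but genuinely different from the paper's. The paper proves a single uniform characterization (Lemma~\ref{RRS}): $v$ is minimal if and only if $\deg_t f(tv+v') \le 1$ for every $v'$. This is established using the Richardson--R\"ohrle--Steinberg ``rank'' decomposition, which furnishes weight vectors $u_1,\dots,u_d$ with $d=\deg f$ such that every $G(\kalg)$-orbit meets $\sum c_i u_i$ and $f$ restricts to $c\prod c_i$ on their span. From this lemma the proposition is immediate: if $\phi$ preserves $f$ then $f(t\phi(v)+\phi(v'))=f(tv+v')$ has the same $t$-degree, so $\phi$ sends minimals to minimals, and Theorem~\ref{rk1} finishes.

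Your route---preserve $f\Rightarrow$ preserve $\{f=0\}\Rightarrow$ (by iterating singular loci) preserve the bottom stratum $\Xb\Rightarrow$ preserve $\O$---is also valid. The singular-locus facts you need (determinantal, Pfaffian, symmetric-determinantal varieties have as singular locus the next stratum down; the $\E_6$ cubic has singular locus the cone over the Severi variety; the quadric already \emph{is} $\O$) are all classical and survive the characteristic restrictions in the table. The check that $P$ is not exceptional in each line is correct as you wrote it. What you lose relative to the paper is uniformity: you must invoke a separate geometric fact for each line, whereas the paper's degree-of-$f(tv+v')$ criterion handles all five at once via the RRS machinery. What you gain is that your argument is more self-contained for a reader who knows determinantal varieties but not \cite{RRS}, and it makes transparent why the same group that preserves $f$ also preserves the full rank stratification (the paper recovers this only implicitly).
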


Because we know so much about these representations, we can check this by hand in each case.  This is well known for line 4, is done for line 1 in \cite{Eaton}, and a similar argument using the generic minimal polynomial defined in \cite[Ch.~VI]{Jac:J} or \cite{G:dets} works for lines 2 and 3.  Alternatively, the proposition follows easily from the following:

\begin{lem}  \label{RRS} Suppose $\k$ is infinite.
For the representations in lines 1--5 of Table \ref{1dim}, $v \in V$, and an indeterminate $t$, we have: $v$ is minimal if and only if $\deg f(tv + v') \le 1$ for all $v' \in V$.
\end{lem}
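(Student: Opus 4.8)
The plan is to verify the biconditional case-by-case using explicit coordinates, exploiting the fact that for each of lines 1--5 the minimal elements have a concrete description (rank $\le 1$ symmetric matrices, rank $\le 2$ alternating matrices, rank $\le 1$ rectangular matrices, rank $\le 1$ elements of the Jordan algebra, norm-zero vectors), and $f$ is a determinant, Pfaffian, cubic norm, or quadratic form respectively. In each case the statement ``$\deg_t f(tv + v') \le 1$ for all $v' \in V$'' says that the polynomial $t \mapsto f(tv+v')$, which a priori has degree $\deg f$, degenerates to an affine-linear function no matter how we perturb by $v'$; I want to show this happens exactly when $v$ lies on the closed orbit $\O$.

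First I would dispose of the easy direction. If $v$ is minimal, then $v$ is $G(\kalg)$-conjugate (up to scalar) to the highest weight vector $\vp$, and since $N_{\GL(V)}(G)$ preserves $\O$ and rescales $f$ (Lemmas \ref{Lt.id}, \ref{AutD.f}), it suffices to check $\deg_t f(t\vp + v') \le 1$ for the single vector $\vp$. For $\vp$ this is a direct computation: e.g. for symmetric or rectangular matrices, $\vp$ is a rank-one matrix $E_{11}$ (or $e_1 e_1^t$), and adding $E_{11}$ to any matrix $v'$ changes exactly one row/one column linearly, so the determinant is affine-linear in $t$; for the Pfaffian one expands along the two rows/columns containing the nonzero entries of the rank-2 block; for the quadratic form, $v$ being isotropic makes $q(tv+v') = t\,b(v,v') + q(v')$ with no $t^2$ term; for $\E_6$, one uses that the cubic form evaluated on a rank-one element plus anything is affine-linear, which is the statement that a rank-one idempotent has the stated behavior under the norm. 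Each is a one-line verification once coordinates are fixed.

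The converse is the substantive direction and where I expect the real work. Suppose $\deg_t f(tv+v') \le 1$ for all $v'$. The idea is that the top-degree-in-$t$ part of $f(tv+v')$ is (up to lower-order terms) governed by the polarization of $f$, and the vanishing of the coefficient of $t^{\deg f}$ just says $f(v) = 0$; the vanishing of the coefficient of $t^{\deg f - 1}$ for all $v'$ says $v$ is a singular point of the hypersurface $\{f = 0\}$, i.e. the gradient (or its characteristic-free analogue, the first polar) of $f$ vanishes at $v$. For the quadratic form this immediately forces $v$ to be isotropic with $b(v,-) = 0$, hence (the form being nondegenerate on $\k^n$ with $n \ge 3$... actually here the form is nondegenerate, so $b(v,-)=0$ forces $v = 0$, which is minimal by convention, while $f(v) = 0$ alone already gives minimality): so the rank-5 case is essentially immediate. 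For determinant/Pfaffian/rectangular-rank-1, the locus where all first partials of $f$ vanish is exactly the locus of matrices of corank $\ge 2$ (resp. the appropriate rank condition), which is strictly larger than $\O$ when $n$ is not small --- so I must use the full strength of the hypothesis (degree $\le 1$, not merely $\le \deg f - 1$) and iterate: requiring \emph{all} higher polars to vanish, which for a matrix $X$ forces every $2\times 2$ minor of $X$ to vanish in the rectangular case, i.e. $\rk X \le 1$; and similarly the condition for determinant is that the second compound / the adjugate-type polarizations vanish, pinning rank exactly. The hard part will be organizing this ``vanishing of all intermediate $t$-coefficients $\Rightarrow$ rank condition'' argument uniformly, especially handling small characteristic (where $p \mid \deg f$ can kill a polarization that would otherwise detect the rank) and the cubic form for $\E_6$, where I would instead invoke the known structure of the Freudenthal/Jordan norm: the vanishing of $f$, the linearization $f(v,v,-)$, and $f(v,-,-)$ at $v$ together characterize elements of rank $\le 1$, which are exactly the minimal elements by Example \ref{basis.eg} applied in that setting.

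Finally I would note that the hypothesis $\k$ infinite is used only so that ``$\deg_t f(tv+v') \le 1$ for all $v' \in V(\k)$'' is equivalent to the same over $\kalg$, i.e. so that a polynomial identity that holds on all $\k$-points holds identically; this lets me do the computations after base change to $\kalg$ where the rank stratification is geometric. With Lemma \ref{RRS} in hand, Proposition \ref{RRS.prop} follows: a linear transformation $T$ preserving $f$ also preserves the condition ``$\deg_t f(tv+v') \le 1$ for all $v'$'' (since $f(t\,Tv + v'') = f(T(tv + T^{-1}v''))$ and $T^{-1}v''$ ranges over all of $V$), hence preserves minimals, hence lies in $N_{\GL(V)}(G)(\k)$ by Theorem \ref{rk1}.
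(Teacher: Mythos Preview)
Your approach is correct but genuinely different from the paper's. You propose a case-by-case verification using the explicit rank stratification for each of the five representations; the paper explicitly notes that this works (citing Marcus--Purves for line~4) but then gives instead a uniform argument for the ``if'' direction based on the structure theory of \cite{RRS}. Namely, the paper invokes the existence of weight vectors $u_1,\ldots,u_d$ (with $d=\deg f$) such that every $v$ is $G(\kalg)$-conjugate to some $\sum_{i=1}^r c_i u_i$ with $c_i\in\kalg^\times$, an element is minimal iff $r\le 1$, and the restriction of $f$ to the span of the $u_i$ is a nonzero scalar times $\prod_i c_i$; from this it is immediate that any non-minimal $v$ (so $r\ge 2$) admits a $v'$ with $\deg_t f(tv+v')\ge 2$. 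Your route trades this single structural citation for five elementary rank computations, which is more self-contained but longer, and for line~3 still requires the Jordan-algebra input that $v^\sharp=0$ characterizes rank-one elements (this is exactly what the vanishing of the $t^2$-coefficient says, and it does suffice---you do not need the $t^1$-coefficient to vanish as your sketch suggests). One small muddle: for line~5 the condition $\deg_t\le 1$ is simply $q(v)=0$, which is already minimality; the bilinear form $b(v,-)$ plays no role. Your treatment of the ``only if'' direction and of the use of the infinite-field hypothesis matches what the paper has in mind.
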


 Again, this claim can be checked by hand in each case, as is done for line 4 in \cite[Lemma 3.2]{MarcusPurves}---note that the difficult direction in that paper is ``if".  (For line 5 this check is really trivial.)  We sketch a more uniform proof of this direction based on \cite{RRS}.

\begin{proof}[Proof of Lemma \ref{RRS}, ``if"] Whether or not $v$ is minimal is unchanged upon enlarging $\k$, and the same is true for the other condition (because $\k$ is infinite), so we may assume that $\k$ is algebraically closed.  The representation $G \ra \GL(V)$ is not only a representation as in \S\ref{LPP.1}, it is furthermore of the type considered in \cite{RRS} and in particular there is a sequence $u_1, \ldots, u_d$ of weight vectors in $V$ so that every element of $V$ is in the $G(\k)$-orbit of some $\sum_{i=1}^r c_i u_i$ for $c_i \in \kx$ (ibid., Th.~1.2(a)); an element is minimal if and only if it is in the orbit of $c_1 u_1$ for some $c_1 \in \kx$; and $f$ vanishes on $\sum_{i=1}^r c_i u_i$ if and only if $r < d$ (ibid., Prop.~2.15(b)). 

The number $d$ is calculated from root system data (ibid., p.~658), but in each case we see that it equals the degree of the invariant polynomial $f$ computed as described in \S\ref{LPP.1}.  We claim that the restriction of $f$ to the span of the $u_i$ is given by
\[
f( \sum_{i=1}^d c_i u_i ) = c \prod_{i=1}^d c_i \quad \text{for some $c \in \kx$.}
\]
Indeed, the normalizer of $T$ in $G$ permutes the $u_i$ arbitrarily (ibid., Th.~2.1), so the monomials appearing with a nonzero coefficient in the formula for the restriction of $f$ are stable under the obvious action by the symmetric group on $d$ letters.  The condition that $f(\sum_{i=1}^r c_i u_i)$ with $c_i \in \kx$ vanishes if and only if $r < d$ together with the degree of $f$ being $d$ implies the claimed formula.

Finally, if $v$ is non-minimal, then it is in the orbit of $\sum_{i=1}^r c_i u_i$ for some $r > 1$, and it is easy to produce a $v'$ so that $\deg f(tv + v') > 1$; this settles the ``if" direction.
\end{proof}

\begin{proof}[Proof of Proposition \ref{RRS.prop}]
Any linear transformation $\phi$ that preserves $f$ by definition also preserves $f$ over every extension of $\k$.  Hence, by Lemma \ref{RRS}, $\phi$ preserves minimal elements in $V \ot \kalg$, i.e., $\phi$ belongs to $\Stab_{\GL(V)}(\O)$, which equals $N_{\GL(V)}(G)$ by Theorem \ref{rk1}.
\end{proof}

\begin{rmk}
Lines 1, 2, and 4 of Table \ref{1dim} have in common that $V$ can be endowed with a bilinear multiplication that is ``strictly power associative" and so $V$ has a generic characteristic polynomial as mentioned above.  Write $E_r$ for the coefficient of the characteristic polynomial that is a homogeneous function on $V$ of degree $r$, so that $E_d = f$.  A uniform argument as in Lemma \ref{RRS} shows that the preserver in $\GL(V)$ of $E_r$ for $3 \le r < d$ is contained in $\Stab_{\GL(V)}(\O)$.  We omit the details, but the interested reader can find a precise description of the preserver of $E_r$ in \cite[Cor.~6.5]{Guralnick:LPP2} for symmetric matrices (line 1), \cite{MarcusWestwick} or \cite[Cor.~7.7]{Guralnick:LPP2} for alternating matrices (line 2), and \cite{MarcusPurves}, \cite{Beasley}, or \cite[Cor.~1]{Wa:basic} for square matrices (line 4).  (For line 3, one also has a generic characteristic polynomial, but $f$ is the only coefficient of degree $\ge 3$.)    
\end{rmk}

\begin{rmk}
Lines 1--5 of the table do not exhaust all the representations considered by \cite{RRS}.  The ones we have omitted lack a $G$-orbit of codimension 1 (ibid., Prop.~3.12) yet there is an open $G$-orbit in $\P(V)$, hence every $G$-invariant polynomial on $V$ is constant.
\end{rmk}

We can now determine the subgroup of $\GL(V)$ of elements that preserve $f$.  Our first result concerns symmetric matrices as in line 1 of the table.  Compare \cite[\S7.III]{Frobenius}, \cite[Th.~1]{Eaton}, \cite{Lim:symm},  \cite[Th.~6.7]{Wa:det}, or \cite[Cor.~6.3]{Guralnick:LPP2}.

\begin{cor}[symmetric matrices] \label{symm.f}
For $n \ge 2$ and $\k$ of characteristic $\ne 2$, every linear transformation $\phi$ of $\Sym_n(\k)$ that preserves the determinant is of the form \eqref{skew.1} where $r^n \det(P)^2 = 1$.
\end{cor}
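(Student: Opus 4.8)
The plan is to deduce Corollary~\ref{symm.f} by combining Proposition~\ref{RRS.prop} with the computation of $N_{\GL(V)}(G)$ carried out earlier. First I would note that, as observed at the start of \S\ref{LPP.soln}, any linear transformation $\phi$ preserving $f = \det$ is automatically invertible, so $\phi$ is a $\k$-point of $\Stab_{\GL(V)}(f)$. Since line~1 of Table~\ref{1dim} satisfies the hypotheses of Proposition~\ref{RRS.prop}, we get $\phi \in N_{\GL(V)}(G)(\k)$, where $G$ is the image of $\Gt = \SL_n$ (equivalently $\GL_n$) acting on $V = \Sym_n(\k)$ by $\rt(r,P)X = rPXP^t$ as in the proof of Corollary~\ref{symm.rk1}.

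Next I would pin down which elements of $N_{\GL(V)}(G)(\k)$ actually preserve $f$. Since $\Aut(\D,\la) = 1$ for this representation (Example~\ref{basic.eg2}), the group $N_{\GL(V)}(G)$ has no nonidentity component contributed by diagram automorphisms, so by Corollary~\ref{Lt.norm} every element of $N_{\GL(V)}(G)(\k)$ is $\rt(r,P)$ for some $(r,P) \in \kx \times \GL_n(\k)$. Then I would invoke Lemma~\ref{Lt.id}: the elements preserving $f$ are exactly those $\rt(r,P)$ with $\chi(r,P) = 1$, where $\chi \!: \Lt \ra \Gm$ is the character defined by $f(\rt(g)v) = \chi(g) f(v)$. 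A direct computation gives $\det(rPXP^t) = r^n \det(P)^2 \det(X)$, so $\chi(r,P) = r^n\det(P)^2$, and the condition $\chi = 1$ is precisely $r^n\det(P)^2 = 1$. This yields exactly the claimed form \eqref{skew.1} with the stated constraint.

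I do not expect a serious obstacle here, since the statement is essentially an immediate specialization of the general machinery to line~1 of the table; the only genuine content is the elementary determinant identity $\det(rPXP^t) = r^n\det(P)^2\det(X)$ and the bookkeeping check that line~1 really is covered by Proposition~\ref{RRS.prop} (which in turn rests on Lemma~\ref{RRS}, already known for symmetric matrices by \cite{Eaton}) and by the normalizer computation of Corollary~\ref{symm.rk1}. One minor point to state cleanly is that the condition ``$\phi$ preserves $f$'' is taken in the algebraist's sense $f\circ\phi = f$, so that it persists over all field extensions, which is what allows the passage through $\Stab_{\GL(V)}(\O)$ over $\kalg$ in Proposition~\ref{RRS.prop}. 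With those ingredients assembled, the corollary follows in a couple of lines.
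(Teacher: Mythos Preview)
Your proposal is correct and follows essentially the same route as the paper, which simply writes ``Combine Proposition~\ref{RRS.prop}, Lemma~\ref{Lt.id}, and Corollary~\ref{symm.rk1}.'' You have merely unpacked that one-liner: the setup $\Lt = \Gm \times \GL_n$, $\rt(r,P)X = rPXP^t$, and the fact that $\Aut(\D,\la)=1$ are exactly what Corollary~\ref{symm.rk1} (and its proof) provide, and your explicit computation of $\chi(r,P) = r^n\det(P)^2$ is the routine step left implicit in the paper.
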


\begin{proof}
Combine Proposition \ref{RRS.prop}, Lemma \ref{Lt.id}, and Corollary \ref{symm.rk1}.
\end{proof}

The next results concern alternating matrices as in line 2 of the table.  Compare \cite[Th.~3]{MarcusWestwick} (for $\k = \R$), \cite[Th.~5.7]{Wa:det}, or \cite[Cor.~7.4]{Guralnick:LPP2}.  

\begin{cor}[alternating $n$-by-$n$ matrices] \label{skew.f}
For even $n \ge 6$, every linear transformation $\phi$ of $\Skew_n(\k)$ that preserves the Pfaffian is of the form \eqref{skew.1} where $r^{n/2} \det(P) = 1$.  $\hfill\qed$
\end{cor}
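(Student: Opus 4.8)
The plan is to follow exactly the template already used for symmetric matrices in Corollary \ref{symm.f}, combining three ingredients: Proposition \ref{RRS.prop} (which places the preserver inside $N_{\GL(V)}(G)(\k)$), Corollary \ref{skew.rk1} (which identifies $N_{\GL(V)}(G)(\k)$ for $\Skew_n$ when $n\neq 4$, so in particular for $n\geq 6$), and Lemma \ref{Lt.id} (which pins down the elements of the identity component that also preserve $f$). First I would invoke Proposition \ref{RRS.prop} for line 2 of Table \ref{1dim}: any $\phi\in\GL(V)$ preserving the Pfaffian $f$ preserves it over every extension of $\k$, hence $\phi\in\Stab_{\GL(V)}(\O)=N_{\GL(V)}(G)$ by Theorem \ref{rk1} (the parabolic here is not exceptional since $n\geq 6$). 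Next, since $n$ is even and $n\neq 4$, Example \ref{basic.eg2} gives $\Aut(\D,\la)=1$, so $N_{\GL(V)}(G)^\circ=N_{\GL(V)}(G)$ and by Corollary \ref{skew.rk1} every element of $N_{\GL(V)}(G)(\k)$ has the form $X\mapsto rPXP^t$ for $r\in\kx$, $P\in\GL_n(\k)$.

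The remaining step is to compute the character $\chi$ of Lemma \ref{Lt.id}. With $\Lt=\Gm\times\GL_n$ and $\rt(r,P)X=rPXP^t$ as in the proof of Corollary \ref{symm.rk1}, I would compute $f(rPXP^t)$. The Pfaffian of $\Skew_n$ satisfies $\Pf(PXP^t)=\det(P)\,\Pf(X)$ (this is the defining multiplicativity of the Pfaffian under congruence) and $\Pf(rX)=r^{n/2}\Pf(X)$ since $\Pf$ is homogeneous of degree $n/2$ in the entries. Hence $\chi(r,P)=r^{n/2}\det(P)$, and Lemma \ref{Lt.id} says $\phi$ preserves $f$ exactly when $r^{n/2}\det(P)=1$. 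Combining with the previous paragraph gives the stated form, completing the proof.

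There is essentially no obstacle here — the corollary is a routine instantiation of the general machinery, and the only genuine computation is the behavior of the Pfaffian under $X\mapsto rPXP^t$, which is standard. The one point requiring mild care is confirming that the case $n=4$ is correctly excluded: for $n=4$ the diagram automorphism group $\Aut(\D,\la)=\Zm2$ is nontrivial (type $A_3$, with $\la=\omega_2$ the middle node fixed), so the non-identity component contributes the extra transformations of \eqref{skew.2}, and the preserver would need the additional analysis via Lemma \ref{AutD.f}; for $n\geq 6$ this does not arise. I would simply note that the hypothesis $n\geq 6$ ensures both that $P$ is non-exceptional and that $\Aut(\D,\la)=1$, so the argument runs cleanly.
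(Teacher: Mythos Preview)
Your proposal is correct and follows exactly the approach the paper intends: the corollary is stated with a $\qed$ and no proof precisely because it is the same combination of Proposition \ref{RRS.prop}, Corollary \ref{skew.rk1} (the $n\neq 4$ case), and Lemma \ref{Lt.id} used for Corollary \ref{symm.f}, with the character computation $\chi(r,P)=r^{n/2}\det(P)$ being the only routine extra step.
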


\begin{cor}[alternating 4-by-4 matrices] \label{skew.f4}
Every linear transformation of $\Skew_4(\k)$ that preserves the Pfaffian is of the form \eqref{skew.1} or \eqref{skew.2} where $r^{n/2} \det(P) = 1$ 
\end{cor}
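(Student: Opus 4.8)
The plan is to follow the same pattern as the proof of Corollary \ref{skew.f} (line 2 of the table with $n\ge 6$), but now keeping track of the extra component of the normalizer that appears only when $n=4$. By Proposition \ref{RRS.prop}, any $\phi\in\GL(V)(\k)$ preserving $f=\Pf$ lies in $N_{\GL(V)}(G)(\k)$, where $G$ is the image of $\Gt=\SL_4$ acting on $V=\Skew_4(\k)$ as in Examples \ref{basic.wedge} and \ref{basic.eg2}. By Example \ref{basic.eg2} we have $\Aut(\D,\la)=\Zm2$ in this case, generated by the class $\pi$ whose image under $\gamma$ is the Hodge star operator $*$ of \eqref{skew.star}, as established in the proof of Corollary \ref{skew.rk1}. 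Hence by Corollary \ref{Lt.norm} every element of $N_{\GL(V)}(G)(\k)$ is either of the form $\rt(r,P)\colon X\mapsto rPXP^t$ for $(r,P)\in\kx\times\GL_4(\k)$ (the identity-component part, using $\Lt=\Gm\times\GL_4$ as in the proof of Corollary \ref{symm.rk1}) or $*$ composed with such a map, which gives $X\mapsto rP X^* P^t$, i.e.\ the form \eqref{skew.2}.

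Next I would pin down which of these elements actually preserve $f$ on the nose, rather than merely up to a scalar. On the identity component this is Lemma \ref{Lt.id}: the character $\chi\colon\Lt\to\Gm$ defined by $f(\rt(g)v)=\chi(g)f(v)$ satisfies $\chi(r,P)=r^{2}\det(P)$ for $n=4$ (more generally $r^{n/2}\det P$, as recorded in Corollary \ref{skew.f}), because $\Pf(PXP^t)=\det(P)\,\Pf(X)$ and $\Pf(rX)=r^{n/2}\Pf(X)$. So the identity-component elements preserving $f$ are exactly those of the form \eqref{skew.1} with $r^{n/2}\det(P)=1$. For the other component I would invoke Lemma \ref{AutD.f}: the scalar $\phi(\pi)$ by which $\gamma(\pi)=*$ scales $f$ is a homomorphism value into $\kx$, and since $\pi^2=1$ we get $\phi(\pi)^2=1$. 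A direct check on $\vp=E_{12}-E_{21}$ (for which $\vp^*=\vp$ by \eqref{skew.star}, so $\Pf(\vp^*)=\Pf(\vp)$) shows $\phi(\pi)=1$; hence $*$ itself preserves $\Pf$, and an element $X\mapsto rPX^*P^t$ preserves $\Pf$ iff $r^{n/2}\det(P)=1$ as well. Combining the two components gives exactly the assertion: every $\phi$ preserving the Pfaffian is of the form \eqref{skew.1} or \eqref{skew.2} with $r^{n/2}\det(P)=1$.

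The only genuinely nontrivial point — and the one I would be careful about — is verifying $\phi(\pi)=1$, i.e.\ that the Hodge star operator really preserves $\Pf$ and not just $\Pf$ up to sign; this is a small explicit computation with the matrix $*$ of \eqref{skew.star} against the degree-$2$ Pfaffian polynomial on $4\times4$ alternating matrices, or equivalently the observation that $*$ fixes the minimal element $\vp$ on which $f(\vp)\ne0$. Everything else is bookkeeping with the characters $\chi$ and $\phi$ already set up in Lemmas \ref{Lt.id} and \ref{AutD.f}, and the citation $n=4$ is exactly the case excluded from Corollary \ref{skew.f}, so the two corollaries together cover all even $n\ge4$.
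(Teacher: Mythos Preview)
Your overall strategy matches the paper's proof exactly: reduce to the normalizer via Proposition \ref{RRS.prop}, handle the identity component with Lemma \ref{Lt.id}, and handle the extra component by invoking Lemma \ref{AutD.f} and checking that $\phi(\pi)=1$, i.e.\ that $*$ preserves $\Pf$ on the nose.

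However, your verification of $\phi(\pi)=1$ is wrong. You propose to check it on $\vp=E_{12}-E_{21}$, arguing that $\vp^*=\vp$ so $\Pf(\vp^*)=\Pf(\vp)$, and you even write ``$*$ fixes the minimal element $\vp$ on which $f(\vp)\ne 0$.'' But $\vp$ is a \emph{minimal} element, i.e.\ a rank-$2$ alternating matrix, and the Pfaffian of a $4\times 4$ alternating matrix vanishes precisely when the rank is less than $4$. Concretely, with $\Pf(X)=x_1x_6-x_2x_5+x_3x_4$ in the coordinates of \eqref{skew.star}, we have $\Pf(\vp)=0$. So the identity $\Pf(\vp^*)=\Pf(\vp)$ is $0=0$ and tells you nothing about $\phi(\pi)$; in particular it cannot distinguish $\phi(\pi)=1$ from $\phi(\pi)=-1$.

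The paper's proof avoids this by doing exactly what Lemma \ref{AutD.f} suggests: pick any $X$ with $\Pf(X)\ne 0$ and compare $\Pf(X^*)$ with $\Pf(X)$. For instance, take $X$ with $x_1=x_6=1$ and the other $x_i=0$; then $\Pf(X)=1$, and from \eqref{skew.star} one sees $X^*=X$, so $\Pf(X^*)=1$ and $\phi(\pi)=1$. (Alternatively, one can just compute $\Pf(X^*)=x_1x_6-x_2x_5+x_3x_4=\Pf(X)$ directly from the coordinate description of $*$.) Once you replace your $\vp$-check with a check on a nonsingular $X$, your argument is complete and identical to the paper's.
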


\begin{proof}
In view of Lemma \ref{AutD.f}, it suffices to pick some $X \in \Skew_4(\k)$ with $\Pf(X) \ne 0$ and verify that $\Pf(X^*) = \Pf(X)$ for $*$ as in \eqref{skew.star}.
\end{proof}

We now determine the linear transformations that preserve the determinant.  This is the case famously treated by Frobenius in \cite[\S7.I]{Frobenius} and Dieudonn\'e in \cite{Dieu:LPP}, and also in \cite[Th.~2]{MarcusMoyls:algebras} (for $\k = \C$) and \cite[Th.~4.2]{Wa:det}.

\begin{cor}[square matrices] \label{square.f}
Every linear transformation of $M_n(\k)$ that preserves the determinant is of the form \eqref{rect.1} or \eqref{rect.2} where $\det(AB) = 1$.$\hfill\qed$
\end{cor}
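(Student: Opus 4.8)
The plan is to combine Proposition~\ref{RRS.prop}---which reduces the preserver problem to understanding the normalizer $N_{\GL(V)}(G)$---with the explicit description of that normalizer already obtained for $n$-by-$n$ matrices in Corollary~\ref{rect.rk1}, and then to impose the one remaining constraint coming from the requirement that the determinant itself, and not merely the set of rank~$1$ matrices, be preserved.

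First I would recall the set-up for line~4 of Table~\ref{1dim}: one takes $\Gt = \SL_n \times \SL_n$, $V = M_n(\k)$ with $f = \det$, and, in the notation of \eqref{Lt.hyp}, $\Lt = \GL_n \times \GL_n$ acting by $\rt(A,B)X = AXB^t$, so that $\im \rt$ contains the scalars and $\ker \rt$ is a split torus. Here $\Aut(\D,\la) = \Zm2$, generated by the element $\pi$ that swaps the two $A_{n-1}$ components of the Dynkin diagram, and one checks from $(AXB^t)^t = BX^tA^t$ that $\gamma(\pi)$ is the transpose map $X \mapsto X^t$ (it fixes the highest weight vector $E_{11}$ and induces the swap on $G$). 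By Corollary~\ref{Lt.norm}, $N_{\GL(V)}(G)(\k)$ is therefore generated by $\rt(\GL_n(\k)\times\GL_n(\k))$ together with $X \mapsto X^t$; writing $B$ in place of $B^t$, this is exactly the set of transformations of the form \eqref{rect.1} or \eqref{rect.2} with $A,B \in \GL_n(\k)$, i.e., the $m = n$ case of Corollary~\ref{rect.rk1}.

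Now, by Proposition~\ref{RRS.prop}, any linear transformation of $M_n(\k)$ preserving $\det$ lies in $N_{\GL(V)}(G)(\k)$, hence is of the form \eqref{rect.1} or \eqref{rect.2}. To finish I would simply compute the effect of each family on the determinant: $\det(AXB) = \det(A)\det(B)\det(X)$ and $\det(AX^tB) = \det(A)\det(X)\det(B)$, so in either case the transformation preserves $\det$ precisely when $\det(AB) = 1$. Phrased through the machinery of Section~\ref{LPP.soln}, the character $\chi\colon\Lt\to\Gm$ of Lemma~\ref{Lt.id} is $\chi(A,B) = \det(A)\det(B)$, while the character $\phi$ of Lemma~\ref{AutD.f} attached to $\pi$ is trivial because $\det(X^t) = \det(X)$; Lemmas~\ref{Lt.id} and~\ref{AutD.f} then give the stated description.

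There is no genuine obstacle here once Proposition~\ref{RRS.prop} and Corollary~\ref{rect.rk1} are in hand: the determinant computation is immediate, which is why the corollary carries only a bare $\qed$. The one ingredient specific to this line of the table---the identification of $N_{\GL(V)}(G)$ with the transpose-extended group of two-sided multiplications---was already established in proving Corollary~\ref{rect.rk1}, via Theorem~\ref{rk1} together with the observation that the relevant parabolic $P$, whose Levi has semisimple part of type $A_{n-2}\times A_{n-2}$, is not exceptional.
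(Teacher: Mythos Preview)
Your proposal is correct and follows exactly the approach implicit in the paper: the bare $\qed$ signals that one combines Proposition~\ref{RRS.prop} with the description of the normalizer from Corollary~\ref{rect.rk1} and then imposes the determinant condition directly, just as in the displayed proof of Corollary~\ref{symm.f}. Your identification of $\gamma(\pi)$ with the transpose and of the character $\chi(A,B)=\det(A)\det(B)$ spells out precisely what the paper leaves tacit.
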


Every minuscule  representation $V$ of a group $G$ of type $E_6$ has a nonzero $G$-invariant cubic form $f$, and $G$-invariance uniquely determines $f$ up to multiplication by an element of $\kx$.  For the following result, compare \cite[7.3.2]{Sp:ex} (for $\car \k \ne 2, 3$) or \cite[5.4]{Asch:E6}.  
The analogous (and a priori coarser) result for Lie algebras is \cite[5.5.1]{Lurie}.

\begin{cor}[minuscule representation of $E_6$] \label{E6.f}
In the notation of the preceding paragraph, the preserver of $f$ in $\GL(V)$ is $G(\k)$.
\end{cor}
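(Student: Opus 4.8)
The plan is to locate $\Stab_{\GL(V)}(f)$ inside the normalizer $N_{\GL(V)}(G)$, whose structure is pinned down by Proposition \ref{AutD}. The $27$-dimensional minuscule representation is line 3 of Table \ref{1dim}, so Proposition \ref{RRS.prop} already gives that every linear transformation of $V$ preserving $f$ lies in $N_{\GL(V)}(G)(\k)$; the remaining task is to single out the elements of the normalizer that fix $f$ exactly rather than merely rescale it. Since $V$ is faithful, $G$ is the simply connected group $\E_6^\sc$, and its center $Z$ is $\mu_3$.

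First I would observe that the nontrivial automorphism of the $E_6$ Dynkin diagram interchanges the two minuscule fundamental weights, so it does not fix $\la$; hence $\Aut(\D,\la)=1$, and Proposition \ref{AutD} identifies $N_{\GL(V)}(G)$ with the connected group $(\Gm\times G)/Z$. Next I would introduce the character $\chi\colon N_{\GL(V)}(G)\to\Gm$ determined by $f\circ\phi=\chi(\phi)\,f$, exactly as in Lemmas \ref{Lt.id} and \ref{AutD.f}: it is trivial on $G$ because $f$ is $G$-invariant, and on the subgroup $\Gm$ of scalar matrices it is $t\mapsto t^{\deg f}=t^{3}$. Thus the elements of $N_{\GL(V)}(G)$ that preserve $f$ form the subgroup $\ker\chi=\{[(t,g)]:t^{3}=1\}$.

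The key input --- and the only place the particular representation enters --- is that the center $Z$ of $\E_6^\sc$ acts on $V$ by scalar multiplication via an isomorphism $Z\cong\mu_3\subseteq\Gm$, since the highest weight generates the weight lattice modulo the root lattice; in other words $\mu_{\deg f}=\mu_3=\rho(Z)$. Consequently, if $t^{3}=1$ then $t=\rho(z)$ for some $z$ in $Z$, and in $(\Gm\times G)/Z$ the class $[(\rho(z),g)]$ equals $[(1,zg)]$, which lies in the image of $G$; so $\ker\chi$ is precisely the image of $G$, hence $\cong G$. Combining this with Proposition \ref{RRS.prop} (and the trivial reverse inclusion, since $G(\k)$ preserves $f$) yields $\Stab_{\GL(V)}(f)(\k)=G(\k)$. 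I expect the only delicate point to be the bookkeeping with the anti-diagonal copy of $Z$ in $\Gm\times G$: verifying that the condition $t^{\deg f}=1$ returns one to $G$ precisely because $\deg f$ equals the order of the center. Everything else is formal. One could equally well check that hypothesis \eqref{Lt.hyp} holds --- take $\Lt=(\Gm\times\E_6^\sc)/\mu_3$ with $\mu_3$ anti-diagonal, so that $\rt$ is injective and $\ker\rt$ is the trivial (split) torus --- and then read the answer off Lemma \ref{Lt.id}; the content is identical.
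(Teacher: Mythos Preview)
Your proof is correct and follows essentially the same approach as the paper. The paper's one-line argument invokes Lemma \ref{aclosed.id} together with the two observations $\Aut(\D,\la)=1$ and $\mu_3\subset Z(G)$; you have unpacked the content of that lemma via the character $\chi$ and the explicit description $N_{\GL(V)}(G)\cong(\Gm\times G)/Z$, but the substance---that $\deg f$ equals the order of the center, so $G(\k)\cdot\mu_{\deg f}(\k)=G(\k)$---is identical.
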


\begin{proof}
Since $\Aut(\D, \la) = 1$ and $\mu_3$ is in the center of $G$, Lemma \ref{aclosed.id} gives the claim.
\end{proof}

\subsection*{Commuting with the adjoint}
For the representations considered in this section, one has a notion of a ``classical adjoint" $\adj \!: V \ra V$, which is a polynomial map of degree $(\deg f) - 1$.  For lines 1, 2, and 4 of the table, the papers \cite{Sinkhorn} and \cite{ChanLimTan} compute the linear transformations on $V$ that commute with this map.  We can do the same for line 3, where $G$ is the simply connected split group of type $E_6$. The group $\Aut(\D)$ is $\Zm2$ and we write $\pi$ for the nonzero element; the subgroup of $G$ of elements fixed by $i(\pi)$ is a split group of type $F_4$ \cite[7.3]{CG} which we denote simply by $F_4$.  The center of $G$ is a copy of $\mu_3$.  We find:

\begin{cor} \label{e6.adj}
If $\car \k \ne 2, 3$, then the subgroup of $\GL(V)$ of elements commuting with the adjoint is $F_4(\k) \cdot \mu_3(\k)$.
\end{cor}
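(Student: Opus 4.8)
The plan is to reduce the statement about the adjoint map to the already-established description of the preserver of the cubic form $f$. First I would recall that on the minuscule representation $V$ of the simply connected group $G$ of type $E_6$, there is a $G$-equivariant quadratic map $\# \colon V \to V^*$ (the ``adjoint'' up to the choice of an identification $V \cong V^*$), characterized up to scalar by the fact that $V$ carries, besides the cubic form $f$, a unique (up to scalar) $G$-invariant element of $S^2(V^*) \otimes V$; concretely, $\#$ is the polarization of $f$ in the sense that $f(v) = \tfrac13 \langle v^\#, v\rangle$ for the $G$-invariant pairing $V \otimes V^* \to \k$. A linear transformation $\phi \in \GL(V)$ ``commutes with the adjoint'' means $\phi(v^\#) = (\phi v)^\#$ after identifying $V$ with $V^*$; since $\#$ is quadratic and $f$ is cubic, I would want to extract the scalar by which $\phi$ scales $f$ and the scalar by which it scales $\#$, and play the homogeneity degrees $3$ and $2$ against each other.

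The key steps, in order, are: (1) Observe that if $\phi$ commutes with $\#$ then $\phi$ normalizes $G$ — this follows because the adjoint determines the closed orbit $\O$ (the minimal elements are exactly the nonzero $v$ with $v^\# = 0$, cf.\ the rank-one description of the $E_6$ Jordan algebra), so $\phi \in \Stab_{\GL(V)}(\O) = N_{\GL(V)}(G)$ by Theorem \ref{rk1}, the parabolic $P$ here being non-exceptional. (2) Using Corollary \ref{norm.gen} (over $\kalg$) and Proposition \ref{AutD} together with the fact that $\Aut(\D,\la) = \Aut(\D) = \Zm2$, write $\phi = g\, z\, \gamma(\pi)^\epsilon$ with $g \in G$, $z \in \kx$, $\epsilon \in \{0,1\}$, and determine which such products commute with $\#$. (3) Compute the scaling behavior: $g$ fixes $f$ and $\#$; the scalar $z \in \kx$ scales $f$ by $z^3$ and scales $\#\colon V \to V$ (after the self-identification) by $z$, so commuting with $\#$ forces $z^{2} \cdot z^{-1} \cdot$ (bookkeeping for the identification) $= 1$, i.e.\ $z \in \mu_3$; and $\gamma(\pi)$, which realizes the diagram automorphism, scales $\#$ by $\phi(\pi) = 1$ (one checks $f$ is $\gamma(\pi)$-invariant, as in Lemma \ref{AutD.f}, hence so is $\#$) but its conjugation action is $i(\pi)$, whose fixed subgroup in $G$ is $F_4$. (4) Assemble: the elements commuting with $\#$ are exactly $\{ g\, z\, \gamma(\pi)^\epsilon : g \in F_4,\ z \in \mu_3 \}$ when $\epsilon$ is forced — but in fact $\gamma(\pi)$ itself commutes with $\#$, so one gets $F_4(\k)\cdot\mu_3(\k)\cdot\langle\gamma(\pi)\rangle$; here I must be careful, since the stated answer is just $F_4(\k)\cdot\mu_3(\k)$, so either $\gamma(\pi)$ lies in $F_4(\k)\cdot\mu_3(\k)$ (it does: $\gamma(\pi)$ can be normalized to lie in $F_4$, being a $\k$-point of the subgroup fixed by $i(\pi)$ and acting trivially on the $F_4$-fixed line $\k\vp$) or the precise identification of $\gamma(\pi)$ eliminates the extra factor.

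The main obstacle I expect is step (3), the precise bookkeeping of how $\GL(V)$-scalars and the diagram automorphism interact with the self-identification $V \cong V^*$ that is implicit in calling $\#$ ``the adjoint'' (as a map $V \to V$ rather than $V \to V^*$). The adjoint $V \to V$ is only defined after fixing such an identification, or equivalently after fixing the invariant cubic $f$ together with the invariant quadratic $V \otimes V \to \k$ that pairs $V$ against $S^2 V$; scaling $v \mapsto zv$ then acts on the codomain of $\# \colon V \to V$ and one must track whether the identification is itself rescaled. Getting the exponent right — so that ``commutes with $\adj$'' reads off as $z \in \mu_3 = \mu_{\deg f}/\mu_1$ rather than some other subgroup — is exactly the content, and it is cleanest to phrase it as: $\phi$ commutes with $\adj$ iff $\phi$ scales $f$ by a cube root of unity and normalizes $G$, at which point Lemma \ref{aclosed.id} (together with $\gamma(\pi)$ landing in $F_4\cdot\mu_3$) gives the result over $\kalg$, and a Galois-descent remark à la \S\ref{nonsplit.sec} — or simply the fact that $F_4\cdot\mu_3$ is the $\k$-subgroup scheme cut out by the two conditions, which are defined over $\k$ — yields the statement over $\k$.
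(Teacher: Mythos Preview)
Your step (1) is correct and matches the paper: the minimal elements are exactly the nonzero $v$ with $\adj v = 0$, so any $\phi$ commuting with $\adj$ lies in $\Stab_{\GL(V)}(\O) = N_{\GL(V)}(G)$.

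The proposal goes wrong at step (2). You assert $\Aut(\D,\la) = \Aut(\D) = \Zm2$, but in fact $\Aut(\D,\la) = 1$: the nontrivial diagram automorphism $\pi$ of $E_6$ swaps the two minuscule weights $\omega_1 \leftrightarrow \omega_6$, so it does \emph{not} fix $\la$. (The paper already used this in the proof of Corollary~\ref{E6.f}.) Consequently $N_{\GL(V)}(G) = \Gm . G$ with no extra component, and the element $\gamma(\pi) \in \GL(V)$ you invoke simply does not exist via Proposition~\ref{AutD}. Your subsequent bookkeeping in steps (3)--(4) is built on this nonexistent object, and the attempted patch (``$\gamma(\pi)$ can be normalized to lie in $F_4$'') has no meaning.

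The genuine mechanism by which $\pi$ enters is different and is the crux of the paper's argument: the adjoint is not $G$-equivariant but \emph{twisted}-equivariant, satisfying $\adj(cgv) = c^2\, i(\pi)(g)\, \adj(v)$ for $c \in \Gm$, $g \in G$. From this one reads off directly that $cg$ commutes with $\adj$ iff $i(\pi)(g) = c^{-1} g$; since $i(\pi)(g) g^{-1} \in G$ this forces the scalar $c$ into $Z(G) = \mu_3$, hence $cg \in G$, and the image of $cg$ in $G/\mu_3$ lies in the $i(\pi)$-fixed subgroup, which one then identifies with the image of $F_4$. Your scaling analysis (``$z$ scales $\#$ by $z$'', ``$\phi$ scales $f$ by a cube root of unity'') does not recover this: a scalar $z$ sends $\adj$ to $z^2 \adj$, and the condition you'd extract that way is not the right one. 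The missing idea is precisely the twisted-equivariance formula for $\adj$.
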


\begin{proof}
The minimal elements are precisely the nonzero $v \in V$ so that $\adj v = 0$ \cite[7.10]{CG}, so any element of $\GL(V)$ that preserves $\adj$ necessarily preserves minimal elements, hence belongs to $\Gm.G$.  Further, for any $g \in G$ and $c \in \Gm$, we have $\adj (cgv) = c^2 i(\pi)(g) \adj(v)$ \cite[7.9]{CG}, hence such a $cg$ commutes with $\adj$ if and only if $i(\pi)(g) = c^{-1} g$.  In particular, $c$ belongs to $G$ and so is a cube root of unity.  That is, the subgroup $H$ of $\GL(V)$ of elements commuting with the adjoint is contained in $G$, and the image of $H$ in the adjoint group $G/\mu_3$ is contained in the subgroup fixed by $i(\pi)$.

As $F_4 \times \mu_3$ is obviously contained in $H$, it suffices to show that its image in $G/\mu_3$ is the subgroup fixed by $i(\pi)$.  But this subgroup is connected reductive with Lie algebra of type $F_4$ \cite[7.3]{CG}, hence is the same as the image of $F_4$.  This proves the claim.
\end{proof}

\section{Lines 6--11 of Table \ref{1dim}} \label{FTS.sec}

The representations on lines 6--11 of Table \ref{1dim} are all of the form considered in \cite{Roe:extra}, \cite{Ferr:strict}, and \cite{Meyb:FT}.   In particular, the ring of $G$-invariant polynomials on $V$ is generated by a homogeneous polynomial $f$ of degree 4.    These representations appear, for example, when studying electromagnetic black hole charges in various supergravity theories, see \cite{BDFMR:small}. We suppose in this section that the characteristic of $\k$ is $\ne 2, 3$; the assumption that the characteristic is $\ne 2$ is so that we may apply the results of \cite{Roe:extra} and the assumption that the characteristic is $\ne 3$ is a convenience so that we may apply the results of \cite{Helenius}.  We will prove:

\begin{prop} \label{FTS.prop}
For the representations in lines 6--11 of Table \ref{1dim}, every element of $\GL(V)$ that preserves $f$ belongs to $N_{\GL(V)}(G)(\k)$.
\end{prop}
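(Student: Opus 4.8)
The plan is to mirror the strategy used for lines 1--5 in Proposition~\ref{RRS.prop}: show that any $\phi \in \GL(V)$ preserving $f$ must preserve the closed orbit $\O$, i.e.\ must send minimal elements to minimal elements in $V \ot \kalg$, and then invoke Theorem~\ref{rk1} (noting that none of the relevant parabolics is exceptional) to conclude $\phi \in \Stab_{\GL(V)}(\O) = N_{\GL(V)}(G)$. As in the earlier section, whether $v$ is minimal and whether $\phi$ preserves $f$ are both unaffected by enlarging $\k$, so we may assume $\k = \kalg$. The essential point is therefore an intrinsic, $f$-theoretic characterization of the minimal elements: I would prove that $v \in V$ is minimal if and only if $v$ lies in the ``highest rank stratum'' detected by the quartic $f$ and its associated forms.

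The key input is the rank stratification of these representations coming from \cite{Roe:extra}, \cite{Meyb:FT}, and \cite{Helenius} (this is exactly why the characteristic-$\ne 2,3$ hypotheses were imposed): such a representation $V$ — a Freudenthal triple system — carries, besides $f$, a $G$-invariant symplectic form $b$ and a $G$-invariant trilinear (equivalently, a quadratic map $V \to V$, the ``Freudenthal cross'' or triple product $T$) such that the $G(\kalg)$-orbits in $V$ are distinguished by vanishing conditions on $f$, on $T$, and on the polarizations of $T$. Concretely, the four nonzero orbits are: rank~$1$ (the minimal/``primitive'' elements, forming the affine cone over $\O$); rank~$2$; rank~$3$; and rank~$4$ (the open orbit, $f \ne 0$). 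The rank~$1$ elements are characterized by the vanishing of $T(v,v,\cdot)$ together with a generic non-vanishing of the next polarization — precisely the description in \cite[\S2]{Roe:extra} or \cite{Meyb:FT}. So the first step is to extract from those references the statement: $\k v \in \O \iff$ [an explicit system of polynomial equations and open conditions built from $f$ and its covariants]. Crucially, since $T$ and $b$ are themselves $G$-invariant and (as one checks from the FTS axioms, cf.\ \cite{Brown:E7}, \cite{Ferr:strict}) are determined up to scalar by $f$ alone — $b$ is essentially the unique invariant alternating form, and $T$ is recovered from the directional derivatives of $f$ — any linear $\phi$ preserving $f$ must rescale $b$ and $T$ by compatible scalars, hence must preserve each rank stratum, in particular the rank~$1$ stratum $\O$.

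I would carry this out as follows. (i) Reduce to $\k$ algebraically closed. (ii) Recall from \cite{Roe:extra}/\cite{Helenius} the list of $G(\k)$-orbits in $V$ and the polynomial/covariant description of each, identifying the rank~$1$ orbit with the cone over $\O$ (Lemma~\ref{FH} guarantees $\O$ is the unique closed orbit whose lines span $V$, so this identification is forced). (iii) Show that the covariant $T$ (the quadratic map $V \to V$) is, up to a scalar, canonically attached to $f$ — e.g.\ via $f$'s Hessian or third polar together with $b$ — so that $f \circ \phi = f$ forces $T \circ \phi = \lambda \cdot (\phi \otimes \phi \otimes \phi$-conjugate of $T)$ for a scalar $\lambda$, and likewise for $b$. (iv) Conclude that $\phi$ carries the rank~$1$ locus onto itself, i.e.\ preserves minimal elements in $V \ot \kalg$. (v) Apply Proposition~\ref{ChanLim} (so $\phi \in \GL(V)$ — already known since $f$ is nonconstant and $V$ irreducible) and Theorem~\ref{rk1} to land in $N_{\GL(V)}(G)(\k)$.

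The main obstacle I anticipate is step (iii): making precise and uniform — across all six lines, and in all characteristics $\ne 2,3$ — the claim that the auxiliary invariants ($b$ and the triple product $T$) are recoverable from $f$ alone, so that a preserver of $f$ automatically (semi-)preserves them. Over $\C$ this is classical FTS theory, but one must be careful that the polarization identities expressing $T$ in terms of derivatives of $f$ do not degenerate in small characteristic; this is exactly where \cite{Roe:extra} (char.\ $\ne 2$) and \cite{Helenius} (char.\ $\ne 3$) are needed, and citing them correctly for the orbit-by-covariant description is the crux. A secondary, more bookkeeping-level point is verifying case-by-case that the parabolic $P$ occurring in lines 6--11 is not exceptional in the sense of Definition~\ref{dem.def} — but this is immediate from inspection of the weights $\la$ in Table~\ref{1dim} (none of them is the relevant last fundamental weight for $B_\ell$ or $C_\ell$, and $G_2$ does not occur), so Theorem~\ref{rk1} genuinely applies.
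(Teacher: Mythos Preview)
Your overall strategy---show that any $\phi$ preserving $f$ must preserve the minimal elements, then invoke Theorem~\ref{rk1}---is exactly the paper's. The difference is in how minimality is detected, and here the paper takes a shorter path that sidesteps precisely the obstacle you flag in step~(iii).

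Rather than trying to recover the symplectic form $\qform{\,,\,}$ and the triple product $t$ from $f$ (which, as you note, is delicate: the uniqueness of $\qform{\,,\,}$ as a $G$-invariant form is of no help until you already know $\phi$ normalizes $G$, which is what you are trying to prove), the paper works directly with the symmetric bilinear form $b_x(v_1,v_2) := f(x,x,v_1,v_2)$. This is a pure polarization of $f$, so it is manifestly carried along by $\phi$: one has $b_{\phi(x)}(\phi v_1,\phi v_2) = b_x(v_1,v_2)$, hence $b_x$ and $b_{\phi(x)}$ are isometric. The key lemma (Lemma~\ref{radlem}) is then: $x$ is minimal if and only if the radical of $b_x$ has codimension~$1$ in $V$. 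The proof of that lemma does use the full FTS machinery ($\qform{\,,\,}$, $t$, and the results of \cite{Helenius}), but the \emph{statement} mentions only $f$, so the proposition follows in two lines.

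Your route via the full rank stratification would work if step~(iii) can be made rigorous, but it buys you more than you need: you would be showing $\phi$ preserves all rank strata, when preserving rank~$1$ suffices, and the price is exactly the circularity problem above. The paper's $b_x$-criterion isolates the rank~$1$ condition in $f$-only terms and avoids the issue entirely.
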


In view of our assumption on the characteristic, we are free to abuse notation and multilinearize $f$ to obtain a symmetric 4-linear form that we also denote by $f$.
Further, for each of these representations, there is a nondegenerate skew-symmetric bilinear form $\qform{\, , \,}$ on $V$ that is invariant under $G$.  This allows us to define a trilinear map $t \!: V \times V \times V \ra V$ implicitly by the equation:
\[
\qform{t(x_1, x_2, x_3), x_4} = f(x_1, x_2, x_3, x_4) \quad \text{for $x_1, x_2, x_3, x_4 \in V$.}
\]

For each $x \in V$, we define a symmetric bilinear form $b_x$ on $V$ via $b_x(v_1, v_2) = f(x, x, v_1, v_2)$.  We have:
\begin{lem} \label{radlem}
$x$ is a minimal element if and only if the dimension of the radical of $b_x$ is $(\dim V) - 1$.
\end{lem}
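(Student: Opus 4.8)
The plan is to convert the condition on $\rad(b_x)$ into a rank condition on a single linear map and then treat the two implications separately---one by a short weight computation, the other by appealing to the structure of these representations as Freudenthal triple systems. First I would pass to $\k=\kalg$, since neither ``$x$ is minimal'' nor ``$\dim\rad(b_x)=\dim V-1$'' is affected by this extension. Because $\qform{\, , \,}$ is nondegenerate, $y\in\rad(b_x)$ precisely when $t(x,x,y)$ is $\qform{\, , \,}$-orthogonal to all of $V$, i.e., when $t(x,x,y)=0$; hence $\rad(b_x)=\ker\phi_x$ for the linear map $\phi_x\colon V\to V$, $\phi_x(y)=t(x,x,y)$, and the assertion becomes: $x$ is minimal if and only if $\phi_x$ has rank $1$. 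Since $f$ and $\qform{\, , \,}$ are $G$-invariant, $t(gx_1,gx_2,gx_3)=g\,t(x_1,x_2,x_3)$; in particular $\phi_{gx}=g\phi_xg^{-1}$ and $\phi_{cx}=c^2\phi_x$, so the rank of $\phi_x$ depends only on $[x]\in\P(V)$ and is constant along $G$-orbits.

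For the forward implication this reduces me to the case $x=\vp$. Since $V$ carries a nondegenerate $G$-invariant bilinear form, its set of weights is stable under $\mu\mapsto-\mu$; thus the lowest weight is $-\la$, every weight $\mu$ satisfies $-\la\le\mu\le\la$, and $V_\la$, $V_{-\la}$ are the lines spanned by $\vp$ and by a lowest weight vector $v^-$. If $v\in V$ has weight $\mu$, then $t(\vp,\vp,v)$ has weight $2\la+\mu$, which is a weight of $V$ only when $2\la+\mu\le\la$; together with $\mu\ge-\la$ this forces $\mu=-\la$, and then $t(\vp,\vp,v)\in V_\la=\k\vp$. Hence $\phi_\vp$ annihilates every weight space other than $V_{-\la}$ and has image in $\k\vp$, so its rank is at most $1$. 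For equality I would use that $\vp$ is a \emph{strictly regular} element of the triple system in the sense of \cite{Ferr:strict}, \cite{Brown:E7}: after normalizing $v^-$ so that $\qform{v^-,\vp}=1$, the product $t(\vp,\vp,v^-)$ is a nonzero multiple of $\vp$, equivalently $f(\vp,\vp,v^-,v^-)\ne0$ (which can also be read off from the explicit formula for $f$), so $\phi_\vp$ has rank exactly $1$.

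For the converse, suppose $\phi_x$ has rank $1$. Then $x\ne0$ and $\phi_x(y)=\ell(y)u$ for a nonzero $u\in V$ and a nonzero functional $\ell$ on $V$; the symmetry of $f(x,x,y,z)=\ell(y)\qform{u,z}$ in $y$ and $z$ forces $\ell=c\,\qform{u,-}$ for some $c\in\kx$, and rescaling $u$ (legitimate since $\k=\kalg$ and $\car\k\ne2$) yields $t(x,x,y)=\qform{y,u}\,u$ for all $y\in V$. The remaining step---to conclude from this identity that $x$ is minimal---is the main obstacle: it cannot be seen by weights, since a general $x$ need not be conjugate to a highest weight vector, and it relies on the classification of $G$-orbits on these representations in \cite{Roe:extra}, \cite{Ferr:strict}, \cite{Meyb:FT}, \cite{Helenius}, from which the nonzero elements with $\phi_x$ of rank $\le1$ are exactly the strictly regular ones, and the strictly regular elements form the closed orbit $\O$, i.e., they are exactly the minimal elements. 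A more hands-on alternative would be to derive from the displayed identity, via the standard Freudenthal identities, that $f(x,x,x,x)=0$ and then $t(x,x,x)=0$---so that $x$ has rank at most $2$ in the triple system---and then to exclude rank exactly $2$ by checking that such elements have $\phi_x$ of rank $\ge2$; carrying this out uniformly over lines 6--11 of Table \ref{1dim} is precisely the kind of structural input the cited references supply.
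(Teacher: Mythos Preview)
Your proposal is correct and follows essentially the same route as the paper. Both reformulate the codimension-$1$ radical condition as a rank-$1$ condition on the map $y\mapsto t(x,x,y)$: your $\ker\phi_x=\rad(b_x)$ is exactly the paper's observation that $\rad(b_x)=y^\perp$ for some nonzero $y$ if and only if $t(x,x,z)\in\k y$ for all $z$, and for the converse both arguments ultimately defer to the structural results in \cite{Helenius} (the paper cites Propositions~18 and~20 there). Two minor differences: the paper disposes of line~6 by a direct hand check, since \cite{Helenius} treats only lines 7--11, whereas you fold it into the uniform argument; and for the forward implication you supply a self-contained weight computation showing $\phi_{\vp}$ has image in $\k\vp$ and kernel containing $\bigoplus_{\mu\ne-\la}V_\mu$, while the paper simply cites \cite{Helenius} for both directions (using Lemma~14 there for the nonvanishing $b_x\ne0$ that your argument also needs).
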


\begin{proof}
Line 6 is the representation from Example \ref{binarycubics.f}, for which we may check the claim of the lemma by hand.  So assume $G$ and $V$ come from one of the lines 7--11; we may apply results from \S3--4 of \cite{Helenius}. 

The radical of $b_x$ has codimension 1 if and only if it is the subspace $y^\perp$ of $V$ of vectors orthogonal (relative to $\qform{ \, , \,}$) to some nonzero $y \in V$.  That is, if and only if there is a nonzero $y \in V$ such that $f(x, x, y^\perp, z) = 0$ for all $z \in V$.  (For ``only if", one needs to know that $b_x$ is nonzero for $x$ nonzero, which is Lemma 14 in ibid.)  In turn, this is equivalent to: there is a nonzero $y \in V$ such that $t(x, x, z) \in \k y$ for every $z \in V$.  But by ibid., Propositions 18 and 20, that is the same as asking for $x$ to be minimal.
\end{proof}

\begin{proof}[Proof of Proposition \ref{FTS.prop}]
Suppose $\phi$ preserves $f$.  It defines an isometry between the bilinear forms $b_x$ and $b_{\phi(x)}$ for all $x \in V$.  Now apply Lemma \ref{radlem} to deduce that $\phi$ belongs to $\Stab_{\GL(V)}(\O)$, hence to $N_{\GL(V)}(G)$ by Theorem \ref{rk1}.
\end{proof}

We now determine the preserver of the discriminant of binary cubic forms as in line 6 or Example \ref{binarycubics.f}.
We omit the details in the proofs of this corollary and the following items because they are entirely similar to earlier proofs.

\begin{cor}[binary cubics] \label{cubics}
Every linear transformation on the vector space of cubic forms $\k^2 \ra \k$ that preserves the discriminant is of the form $q \mapsto c q \circ g$ for some $c \in \kx$ and $g \in \GL_2(\k)$ such that $c^4 (\det g)^6 = 1$.$\hfill\qed$
\end{cor}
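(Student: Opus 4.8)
The plan is to imitate the proofs of Corollaries \ref{symm.f}--\ref{square.f}, combining Proposition \ref{FTS.prop} with Lemma \ref{Lt.id}. First I would fix the setup: here $\Gt = \SL_2$, the space $V = S^3((\k^2)^*)$ of binary cubic forms is $4$-dimensional, and $f$ is the discriminant, so $(\Gt, V, f)$ is line $6$ of Table \ref{1dim} as in Example \ref{binarycubics.f}; recall $V$ is irreducible since $\car \k \ne 2, 3$. Because $\Gt$ has type $A_1$ we have $\Aut(\D) = 1$, hence $\Aut(\D, \la) = 1$, so by Proposition \ref{AutD} the group $N := N_{\GL(V)}(G)$ is connected and Lemma \ref{AutD.f} contributes nothing; thus only the identity component will need attention.

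Next I would verify hypothesis \eqref{Lt.hyp} with $\Lt := \Gm \times \GL_2$ acting by $\rt(c, g)\, q := c\,(q \circ g^{-1})$ for $(c, g) \in \Gm \times \GL_2$ and $q \in V$. This is a homomorphism $\Lt \to \GL(V)$ whose semisimple part is the image of $\SL_2$, which is faithful (since $\mu_3 \cap \SL_2 = 1$) and hence equal to $\Gt$; whose image contains the scalar transformations $\Gm$; and whose kernel is $\{(t^{3}, tI) : t \in \Gm\} \cong \Gm$, a split torus, where one uses that an element of $\GL_2$ acting on the irreducible $V$ as a scalar must itself be a scalar matrix. The character $\chi \!: \Lt \to \Gm$ defined by $f(\rt(c, g)\, q) = \chi(c, g)\, f(q)$ is then $\chi(c, g) = c^4 (\det g)^{-6}$: this follows since $f$ is homogeneous of degree $4$ and the discriminant of a binary cubic has $\GL_2$-weight $6$, i.e.\ $f(q \circ h) = (\det h)^6 f(q)$, which in turn follows from the $\SL_2$-invariance of $f$ together with the substitution $(x, y) \mapsto (tx, y)$ applied to the explicit formula in Example \ref{binarycubics.f} (using that $\SL_2$ and the matrices $\mathrm{diag}(t, 1)$ generate $\GL_2$).

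To finish: by Proposition \ref{FTS.prop} every $\phi \in \GL(V)$ that preserves $f$ lies in $N(\k)$, which equals $N^\circ(\k)$ by the first paragraph, and by Lemma \ref{Lt.id} such $\phi$ are exactly the maps $q \mapsto c\,(q \circ g^{-1})$ with $c^4 (\det g)^{-6} = 1$; replacing $g$ by $g^{-1}$ rewrites this as $q \mapsto c\,(q \circ g)$ with $c^4 (\det g)^{6} = 1$, which is the assertion. I do not expect a genuine obstacle: as the text observes just above, the argument is entirely parallel to the earlier cases, and the only points needing a short computation are the $\GL_2$-weight of the discriminant and the verification of \eqref{Lt.hyp}, both of which are routine.
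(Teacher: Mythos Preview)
Your proposal is correct and follows exactly the approach the paper intends: the paper omits the proof entirely (marking it with $\qed$ and noting that ``the details \ldots\ are entirely similar to earlier proofs''), and what you have written is precisely the parallel of the proofs of Corollaries \ref{symm.f}--\ref{square.f}, with Proposition \ref{FTS.prop} in place of Proposition \ref{RRS.prop}. Your verifications of hypothesis \eqref{Lt.hyp}, of $\Aut(\D,\la)=1$, and of the character $\chi(c,g)=c^4(\det g)^{-6}$ are all accurate.
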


Line 7 of the table concerns an $\SL_6$-invariant quartic form $f$ on $\wedge^3 \k^6$, for which a (complicated-looking) formula is given in \cite[p.~83]{SK}; we now give an alternative presentation.  Write $\k^6$ as a direct sum $V_2 \oplus V_4$ where $V_d$ has dimension $d$.  There is a natural inclusion $w: V_2 \ot (\wedge^2 V_4) \ra \wedge^3 \k^6$ given by $(c, x) \mapsto c \wedge x$.  Amongst the line of invariant quartic forms on $\wedge^3 \k^6$, there is an element $f$ so that, with respect to a fixed basis $a, b$ of $V_2$, we have:
\begin{equation} \label{SL6.f}
f(w(a \ot x + b \ot y)) = \qform{x, y}^2 - 4 \Pf(x) \Pf(y)
\end{equation}
where $\qform{x , y}$ denotes the coefficient of $t$ in $\Pf(x + ty)$, i.e., the polarization of the $\SL(V_4)$-invariant quadratic form $\Pf$.  (To check the claim \eqref{SL6.f}, one can either use the formula for $f$ in \cite{SK} or one can observe that $fw$ is a nonzero quartic form on $V_2 \ot (\wedge^2 V_4)$ that is invariant under $\SL(V_2) \times \SL(V_4)$, that there is a unique line of such forms if $\k = \C$, and that the right side of \eqref{SL6.f} gives another such form.)  As every $\SL_6(\k)$-orbit in $\wedge^3 \k^6$ meets the image of $w$ by \cite[Lemma 2.2]{Revoy:6}, equation \eqref{SL6.f} is enough to specify $f$ on $\wedge^3 \k^6$.

\begin{cor} \label{SL6}
Every linear transformation of $\wedge^3 (\k^6)$ that preserves the invariant quartic form is of the form 
\begin{equation} \label{SL6.1}
v_1 \wedge v_2 \wedge v_3 \mapsto c(gv_1 \wedge gv_2 \wedge gv_3) \ \text{for $c \in \kx$, $g \in \GL_6$ with $c^4 (\det g)^2 = 1$}
\end{equation}
or the composition of a Hodge star operator with a transformation as in \eqref{SL6.1}.$\hfill\qed$
\end{cor}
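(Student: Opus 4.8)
The plan is to combine Proposition \ref{FTS.prop} (which puts the preserver inside $N_{\GL(V)}(G)(\k)$) with the structural description of $N_{\GL(V)}(G)$ from Proposition \ref{AutD}, exactly as was done for Corollaries \ref{symm.f}, \ref{skew.f4}, and \ref{square.f}. Here $\Gt = \SL_6$, $V = \wedge^3(\k^6)$, and $G = \SL_6/\mu_2$ (since $\gcd(3,6) = 2$); the highest weight is $\la = \omega_3$, which the nontrivial diagram automorphism $\pi$ of $A_5$ fixes, so $\Aut(\D, \la) = \Zm2$. Thus $N_{\GL(V)}(G)$ has two components: the identity component, handled via hypothesis \eqref{Lt.hyp} and Lemma \ref{Lt.id}, and the outer component generated by $\gamma(\pi)$, handled via Lemma \ref{AutD.f}.

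First I would verify \eqref{Lt.hyp}: take $\Lt = \GL_6$ acting on $V = \wedge^3(\k^6)$ by $g\cdot(v_1\wedge v_2\wedge v_3) = gv_1\wedge gv_2\wedge gv_3$, so that $\im\rt$ contains the scalars (a scalar $c$ acts as $c^3$) and $\ker\rt$ is the split torus of scalars $t$ with $t^3 = 1$ on $V$, i.e.\ $\mu_3 \subset \Gm$—wait, more carefully, $\ker\rt(\k) = \{t\in\k^\times : t^3 = 1\}$, a split (finite, hence trivially split) group; this meets the letter of \eqref{Lt.hyp} if we instead observe that the relevant torus here is all of $\Gm$ mapping with $3$-dimensional-weight kernel, so I would phrase it as in the proof of Corollary \ref{symm.rk1}. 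Then Corollary \ref{Lt.norm} identifies $N_{\GL(V)}(G)(\k)$ as generated by $\rt(\GL_6(\k))$ and $\gamma(\pi)$. The character $\chi\colon\GL_6\to\Gm$ with $f(\rt(g)v) = \chi(g)f(v)$ is computed from \eqref{SL6.f}: since $f$ has degree $4$ and the action on $\wedge^6\k^6$ is by $\det$, one gets $\chi(g) = (\det g)^2$, giving the constraint $c^4(\det g)^2 = 1$ in \eqref{SL6.1} after absorbing the scalar $c$ (recall a scalar $c\in\kx\subset\GL(V)$ multiplies $f$ by $c^4$, while $cg$ with $g\in\SL_6$ contributes $c^4\cdot 1$—the bookkeeping here mirrors Corollary \ref{symm.f}).

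For the outer component, I would pick a Hodge star operator $*\colon\wedge^3\k^6\to\wedge^3\k^6$ (given by $\eta\wedge *\eta = \langle\ldots\rangle\,\mathrm{vol}$ for a fixed identification $\wedge^6\k^6\cong\k$); one checks as in the proof of Corollary \ref{skew.rk1} that $\Int(*)$ normalizes $T$ and acts on the root subgroups as $\pi$ does on $\D$, so $*\in N_{\GL(V)}(G)$ and represents the nontrivial coset, i.e.\ $\gamma(\pi)$ is $*$ up to a scalar. Then Lemma \ref{AutD.f} tells us $f(*v) = \phi(\pi)f(v)$ for a constant $\phi(\pi)$, and $\phi(\pi)^2 = 1$ since $\pi^2 = 1$; I would pin down $\phi(\pi)$ (it will be $1$) by evaluating $f$ and $f\circ *$ on a single convenient decomposable-ish element—e.g.\ using the presentation \eqref{SL6.f} on $w(a\ot x + b\ot y)$, where the Hodge star interchanges the roles of the $V_2$- and $V_4$-factors in a controlled way. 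Combining: every $f$-preserver lies in $N_{\GL(V)}(G)(\k)$, hence is $\rt(g)$ or $*\cdot\rt(g)$ with the determinant constraint, which is precisely \eqref{SL6.1} or its composition with a Hodge star.

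The main obstacle I anticipate is the explicit constant-chasing in the last step: confirming that $*$ genuinely normalizes $G$ (not just its Lie algebra) and that $\phi(\pi) = 1$ rather than some other fourth root of unity requires either a direct computation in coordinates using \eqref{SL6.f} or an appeal to the fact that $\Pf$ and $\qform{\,,\,}$ behave predictably under the duality $\wedge^2 V_4 \leftrightarrow \wedge^2 V_4^*$; since the paper explicitly says ``We omit the details in the proofs of this corollary and the following items because they are entirely similar to earlier proofs,'' I would mirror the terse style of Corollary \ref{skew.f4}'s proof and simply assert that one verifies $\Pf(x^*)$-type identities on a single test vector. Everything else is a formal consequence of Proposition \ref{FTS.prop}, Corollary \ref{Lt.norm}, and Lemmas \ref{Lt.id} and \ref{AutD.f}.
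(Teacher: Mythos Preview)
Your proposal is correct and follows precisely the route the paper intends: apply Proposition \ref{FTS.prop} to land in $N_{\GL(V)}(G)$, describe the identity component via the envelope $\Lt = \Gm \times \GL_6$ (as in Corollary \ref{symm.rk1}) together with Lemma \ref{Lt.id} to get the constraint $c^4(\det g)^2 = 1$, and handle the non-identity component by checking that a Hodge star represents $\gamma(\pi)$ and preserves $f$ via Lemma \ref{AutD.f}. The paper omits all of this as ``entirely similar to earlier proofs,'' and your sketch fills in exactly those earlier-proof templates; the one genuine computation you flag (that $\phi(\pi)=1$, verified on a test vector in the style of Corollary \ref{skew.f4}) is indeed the only non-formal step.
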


Regarding line 8 of the table, recall that $\Sp_6$ is defined as the subgroup of $\GL_6$ leaving a particular nondegenerate skew-symmetric bilinear form $b$ invariant on its (tautological) representation $\k^6$.  We write $\wedge^3_0(\k^6)$ for the kernel of the contraction map $\wedge^3 (\k^6) \ra \k^6$, cf.~\cite[\S17.1]{FH}.  The restriction of the $\SL_6$-invariant quartic form on $\wedge^3 \k^6$ to $\wedge^3_0 (\k^6)$ gives an $\Sp_6$-invariant quartic form.

We define $\GSp_6$ to be the subgroup of $\GL_6$ of transformations that scale the bilinear form $b$ by a factor in $\kx$; it is isomorphic to $(\Sp_6 \times \Gm)/\mu_2$.  

\begin{cor} \label{Sp6}
Every linear transformation of the space $\wedge^3_0(\k^6)$ that preserves the invariant quartic is of the form
\[
v_1 \wedge v_2 \wedge v_3 \mapsto c(gv_1 \wedge gv_2 \wedge gv_3) \quad \text{for some $c \in \kx$ and $g \in \GSp_6(\k)$}
\]
with $c^4 (\det g)^2 = 1$.$\hfill\qed$
\end{cor}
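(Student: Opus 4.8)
The plan is to follow the pattern of the proofs of Corollaries \ref{SL6} and \ref{cubics}: produce a group $\Lt$ satisfying \eqref{Lt.hyp}, read off the normalizer from Corollary \ref{Lt.norm}, and then cut it down to the preserver of $f$ using Proposition \ref{FTS.prop} and Lemma \ref{Lt.id}. First I record the structure of $(G,V)$: the space $V=\wedge^3_0(\k^6)$ is the irreducible representation $V_{\omega_3}$ of $\Sp_6$, and the nontrivial central element $-I_6$ acts on $\wedge^3(\k^6)$, hence on $V$, by $(-1)^3=-1$, so the representation is faithful and $G=\Sp_6$ with center $Z=\mu_2=\{\pm I_V\}$. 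Since the Dynkin diagram of type $C_3$ has no nontrivial automorphism, $\Aut(\D,\la)=1$, so by Proposition \ref{AutD} the group $N:=N_{\GL(V)}(G)$ equals $(\Gm\times G)/Z$ and in particular is connected.

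Next I take $\Lt:=\Gm\times\GSp_6$, with $\rt\colon\Lt\to\GL(V)$ defined by $\rt(c,g)(v_1\wedge v_2\wedge v_3)=c\,(gv_1\wedge gv_2\wedge gv_3)$; this lands in $\GL(\wedge^3_0(\k^6))$ because $\GSp_6$ scales the symplectic form and hence scales the contraction map $\wedge^3(\k^6)\to\k^6$, leaving its kernel $\wedge^3_0(\k^6)$ stable. The derived group of $\Lt$ is $\Sp_6=\Gt$, and $\rt$ extends $\rho$. A scalar $tI_6\in\GSp_6$ acts on $V$ as $t^3$, so $\im\rt\supseteq\Gm$; and if $(c,g)\in\ker\rt$ then, writing $g=s\,g_0$ over $\kalg$ with $s\in\kalg^\times$ and $g_0\in\Sp_6(\kalg)$, the element $g_0$ acts on the irreducible $V$ as a scalar, so $g_0\in Z(\Sp_6)$ and $g$ is a scalar matrix $tI_6$ with $c\,t^3=1$. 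Thus $\ker\rt=\{(t^{-3},tI_6):t\in\Gm\}$ is a split torus and \eqref{Lt.hyp} holds. Since $\Aut(\D,\la)=1$, Corollary \ref{Lt.norm} then gives $N(\k)=\rt(\Lt(\k))$.

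By Proposition \ref{FTS.prop} every linear transformation preserving $f$ lies in $N(\k)=N^\circ(\k)$, so Lemma \ref{Lt.id} identifies the preserver of $f$ with the set of $\rt(c,g)$, $(c,g)\in\Lt(\k)$, such that $\chi(c,g)=1$, where $\chi\colon\Lt\to\Gm$ satisfies $f(\rt(c,g)v)=\chi(c,g)\,f(v)$. To compute $\chi$ I use that $f$ is the restriction to $\wedge^3_0(\k^6)$ of the $\SL_6$-invariant quartic $f_6$ on $\wedge^3(\k^6)$ from \eqref{SL6.f}: the assignment $g\mapsto f_6(g\,w)/f_6(w)$ is a rational character of $\GL_6$, hence a power $(\det g)^k$ of the determinant, and evaluating at $g=tI_6$, where $f_6$ gets multiplied by $t^{12}=(\det tI_6)^2$, forces $k=2$. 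Hence $f(\rt(c,g)v)=c^4(\det g)^2\,f(v)$, i.e.\ $\chi(c,g)=c^4(\det g)^2$, so the preserver of $f$ consists exactly of the maps $v_1\wedge v_2\wedge v_3\mapsto c\,(gv_1\wedge gv_2\wedge gv_3)$ with $c\in\kx$, $g\in\GSp_6(\k)$, and $c^4(\det g)^2=1$; conversely each such map lies in $\GL(\wedge^3_0(\k^6))$ and preserves $f$, which gives the corollary.

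No single step is hard here, the argument being parallel to those of Corollaries \ref{SL6} and \ref{cubics}; the one place that really demands care is the choice $\Lt=\Gm\times\GSp_6$ rather than $\GSp_6$ itself. The natural action of $\GSp_6$ on $V$ has kernel $\mu_3$, which is not a torus, so the extra central $\Gm$ factor is needed precisely so that $\ker\rt$ becomes a split torus as \eqref{Lt.hyp} requires; getting the exponent on $\det g$ right in the formula for $\chi$ is the only other spot where a slip would change the stated constraint.
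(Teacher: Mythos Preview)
Your proof is correct and follows exactly the approach the paper intends: the paper omits the proof of this corollary, stating only that it is ``entirely similar to earlier proofs,'' and your argument carries out precisely that template---setting up $\Lt=\Gm\times\GSp_6$ to satisfy \eqref{Lt.hyp}, invoking Corollary \ref{Lt.norm} (with $\Aut(\D,\la)=1$), applying Proposition \ref{FTS.prop} and Lemma \ref{Lt.id}, and computing $\chi(c,g)=c^4(\det g)^2$ via the restriction of the $\SL_6$-invariant quartic. Your closing remark that $\GSp_6$ alone would not suffice (kernel $\mu_3$, not a torus) is a nice observation explaining why the extra $\Gm$ factor is needed.
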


For the representations on lines 9 and 10, we will prove a result under the assumption that $\k$ contains a square root of $-1$.  Alternatively, we could eliminate this hypothesis at the cost of defining a reductive envelope $\Lt$ of $G$ as we defined $\GSp_6$ for $\Sp_6$ above, i.e., as in \eqref{Lt.hyp}.

We write $\HSpin_{12}$ for the image $G$ of $\Spin_{12}$ under a half-spin representation.
\begin{cor} \label{HSpin12} 
Suppose $\k$ contains a square root of $-1$.  Then the subgroup of $\GL_{32}(\k)$ of transformations that preserve the $\HSpin_{12}$-invariant quartic form is $\HSpin_{12}(\k) \cdot \mu_4(\k)$.$\hfill\qed$
\end{cor}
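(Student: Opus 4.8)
The plan is to combine Proposition~\ref{FTS.prop} with a description of the normalizer $N_{\GL(V)}(G)$ and then track the condition ``preserves $f$'' through the exact sequence of Proposition~\ref{AutD}. Here $G = \HSpin_{12}$ is the image of $\Spin_{12}$ under a half-spin representation $V$ (so $\dim V = 32$) and $f$ is the degree~$4$ invariant from line~9 of Table~\ref{1dim}. First I would invoke Proposition~\ref{FTS.prop} to get $\Stab_{\GL(V)}(f)(\k) \subseteq N_{\GL(V)}(G)(\k)$. The Dynkin diagram of $G$ is of type $D_6$, whose unique nontrivial automorphism interchanges the two half-spin fundamental weights and so does not fix $\la$; hence $\Aut(\D,\la) = 1$, and Proposition~\ref{AutD} shows that $N_{\GL(V)}(G)$ is connected and equal to $(\Gm \times G)/Z$, where $Z$ is the center of $G$. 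Since $V$ is faithful and irreducible, $Z$ acts on $V$ by scalars, and as $Z \cong \mu_2$ it is identified with $\mu_2 \subseteq \mu_4 \subseteq \Gm$ inside $\GL(V)$.

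Next I would pin down which elements of $N_{\GL(V)}(G)$ preserve $f$. Any $n \in N_{\GL(V)}(G)(\k)$ is represented over $\kalg$ by a pair $(c,g)$ with $c \in \kalg^\times$, $g \in G(\kalg)$, acting as $v \mapsto c\,\rho(g)v$; because $f$ is $G$-invariant and homogeneous of degree $4$, one has $f(nv) = c^4 f(v)$, so $n$ preserves $f$ exactly when $c^4 = 1$. Thus $\Stab_{\GL(V)}(f)$ is the subgroup scheme $M := (\mu_4 \times G)/Z$ of $N_{\GL(V)}(G)$, and the corollary is reduced to the identity $M(\k) = G(\k)\cdot\mu_4(\k)$.

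Finally I would use the hypothesis that $\k$ contains a square root $i$ of $-1$. Inside $M$ one has $Z \subseteq \mu_4$ and $\mu_4 \cap G = Z$, so $M/G \cong \mu_4/Z \cong \mu_2$ and the composite $\mu_4 \to M \to M/G$ is the squaring map $\mu_4 \to \mu_2$. Since $i \in \k$, this squaring map is surjective on $\k$-points, hence so is $M(\k) \to (M/G)(\k)$; as $G(\k)$ is precisely the kernel of the latter map, any $m \in M(\k)$ can be multiplied by a suitable $c \in \mu_4(\k)$ so that $mc^{-1} \in G(\k)$, giving $M(\k) = G(\k)\cdot\mu_4(\k) = \HSpin_{12}(\k)\cdot\mu_4(\k)$. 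The one genuinely delicate point---and the reason for the hypothesis on $\k$---is exactly this last descent: without a square root of $-1$ the connecting map $M(\k) \to H^1(\k,Z) = \kx/(\kx)^2$ can be nonzero and $M(\k)$ can be strictly larger than $G(\k)\cdot\mu_4(\k)$. Everything else is a routine variant of the arguments already used for Corollaries~\ref{symm.f}--\ref{E6.f}.
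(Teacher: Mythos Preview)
Your proof is correct and matches the paper's intended argument. The paper itself gives no proof here (the corollary is marked with a bare $\qed$, in the same pattern as Corollaries~\ref{cubics}, \ref{SL6}, and \ref{Sp6}), relying on the reader to combine Proposition~\ref{FTS.prop} with the description of $N_{\GL(V)}(G)$ from \S\ref{norm.sec}; you have supplied exactly that combination, including the correct identification $\Aut(\D,\la)=1$ and the reason the hypothesis $\sqrt{-1}\in\k$ is needed to pass from $M=(\mu_4\times G)/Z$ to $G(\k)\cdot\mu_4(\k)$ on $\k$-points.
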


For the next result, compare  \cite[Cor.~2.6(i)]{Sp:E7} or \cite[\S10]{Helenius}.  Those proofs are based on versions of Corollary \ref{E6.f} for $\E_6$, but our proof does not refer to $\E_6$.

\begin{cor} \label{E7}
Suppose $\k$ contains a square root of $-1$.  Then the subgroup of $\GL_{56}(\k)$ of transformations that preserve the $\E^\sc_7$-invariant quartic form is $\E^\sc_7(\k) \cdot \mu_4(\k)$.$\hfill\qed$
\end{cor}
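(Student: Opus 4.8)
The plan is to mimic exactly the structure of Corollaries \ref{HSpin12} and \ref{E6.f}. First I would invoke Proposition \ref{FTS.prop} (line 10 of Table \ref{1dim}) to conclude that every linear transformation of $V$ preserving $f$ lies in $N_{\GL(V)}(G)(\k)$, where $G = \E_7^\sc$ acting on its $56$-dimensional minuscule representation. Next I would compute this normalizer: since $\la$ is a minuscule dominant weight and the Dynkin diagram $E_7$ has no nontrivial automorphisms, $\Aut(\D,\la) = 1$, so by Proposition \ref{AutD} the normalizer $N_{\GL(V)}(G)$ is just $(\Gm \times G)/Z$, i.e.\ $N_{\GL(V)}(G)(\k)$ is generated by $G(\k)$ and the scalars $\kx$ (using Hilbert 90 / Corollary \ref{norm.gen}, but here one can argue directly once $\k$ has a square root of $-1$ as explained below, or use that the center $Z = \mu_2$ is already inside $\Gm$).

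Then I would pin down which scalars actually preserve $f$. By Lemma \ref{AutD.f} there is nothing to do on the component group (it is trivial here), so by Lemma \ref{aclosed.id}--style reasoning the elements of $N_{\GL(V)}(G)^\circ(\k)$ preserving $f$ are exactly the products $gz$ with $g \in G(\k)$, $z \in \kx$, and $z^{\deg f} = z^4 = 1$, i.e.\ $z \in \mu_4(\k)$. Since we assume $\sqrt{-1} \in \k$, $\mu_4(\k)$ has order $4$, and the product $G(\k)\cdot\mu_4(\k)$ is exactly $\E_7^\sc(\k)\cdot\mu_4(\k)$ as claimed; the hypothesis that $\k$ contains a square root of $-1$ is used precisely to guarantee $\mu_4(\k)$ is the full cyclic group of order $4$ and that the scalar $i \cdot \Id$ genuinely lies in $\GL_{56}(\k)$ realizing this group. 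One should check that $G \cap \mu_4 = Z = \mu_2$, so there is no double-counting beyond what the notation $G(\k)\cdot\mu_4(\k)$ already accounts for; this is immediate since $-\Id \in G(\k)$ (it is $\la(-1)$ for the central element of order $2$) but $i\cdot\Id \notin G(\k)$ because $G$ acts with trivial determinant-scaling on no extra central torus.

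The main obstacle, and the only place where any real input is needed, is Proposition \ref{FTS.prop}, which rests on Lemma \ref{radlem} and hence on the detailed structure results of \cite{Helenius} (Propositions 18 and 20, and Lemma 14) for the Freudenthal-triple-system representations; but that proposition is already established in the excerpt, so for the purpose of this corollary it may be cited as a black box. Everything downstream is the same bookkeeping carried out for Corollaries \ref{symm.f}, \ref{HSpin12}, and \ref{E6.f}: identify $\Aut(\D,\la)$, read off the normalizer from Proposition \ref{AutD}, and intersect with the preserver of $f$ using that $f$ has degree $4$ and $\Aut(\D,\la)=1$. I note that the authors themselves flag this by writing ``our proof does not refer to $\E_6$'' and then giving the statement with a \qed symbol, signalling that the proof is exactly this routine combination of Proposition \ref{FTS.prop}, Proposition \ref{AutD}, and Lemma \ref{aclosed.id}.
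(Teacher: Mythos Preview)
Your proposal is correct and matches the paper's approach exactly: the paper gives no explicit proof (just $\qed$), having already noted that the details for this and the surrounding corollaries ``are entirely similar to earlier proofs,'' i.e., the routine combination of Proposition~\ref{FTS.prop}, Proposition~\ref{AutD} (with $\Aut(\D,\la)=1$ for $E_7$), and the degree-$4$ scalar computation. One small sharpening: you do not need the full statement that $N_{\GL(V)}(G)(\k)=G(\k)\cdot\k^\times$; it suffices that any $n\in\Stab(f)(\k)$ factors over $\kalg$ as $n=cg$ with $c^4=1$, and the hypothesis $\sqrt{-1}\in\k$ then forces $c\in\k$, whence $g=c^{-1}n\in G(\k)$.
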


As for line 11, we consider first the case $n = 4$.  As the automorphism group of the Dynkin diagram of $\SL_2 \times \SL_2 \times \SL_2$ is the symmetric group $\mathcal{S}_3$, we have the following result (compare \cite[\S11]{Helenius}):
\begin{cor} \label{hyperdet}
Every linear transformation of $\k^2 \ot \k^2 \ot \k^2$ that preserves the hyperdeterminant is of the form
\[
v_1 \ot v_2 \ot v_3 \mapsto g_1 v_1 \ot g_2 v_2 \ot g_3 v_3 \quad \text{for $g_1, g_2, g_3 \in \GL_2(\k)$}
\]
 such that $\det(g_1 g_2 g_3) = \pm 1$, or is the composition of such a map with a permutation 
\[
v_1 \ot v_2 \ot v_3 \mapsto v_{\s(1)} \ot v_{\s(2)} \ot v_{\s(3)} \quad \text{for $\s \in \mathcal{S}_3$.}  
\]
\end{cor}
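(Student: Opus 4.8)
The plan is to deduce Corollary~\ref{hyperdet} from Proposition~\ref{FTS.prop} together with the structural description of $N_{\GL(V)}(G)$ in Proposition~\ref{AutD}, exactly in the spirit of the preceding corollaries in \S\ref{FTS.sec} (and of Corollaries~\ref{skew.rk1}, \ref{skew.f4} in \S\ref{RRS.sec}). Here $\Gt = \SL_2 \times \SL_2 \times \SL_2$, $V = \k^2 \ot \k^2 \ot \k^2$, $G$ is the image of $\Gt$ in $\GL(V)$, and $f$ is the hyperdeterminant. By Proposition~\ref{FTS.prop}, every linear transformation preserving $f$ lies in $N_{\GL(V)}(G)(\k)$, so it suffices to describe that normalizer and then cut out the subgroup preserving $f$ (rather than just $\{f = 0\}$).

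For the identity component, I would invoke hypothesis~\eqref{Lt.hyp} with $\Lt := \GL_2 \times \GL_2 \times \GL_2$ acting by $\rt(g_1, g_2, g_3)(v_1 \ot v_2 \ot v_3) = g_1 v_1 \ot g_2 v_2 \ot g_3 v_3$; the kernel of $\rt$ is the split torus $\{(s_1, s_2, s_3) \mid s_1 s_2 s_3 = 1, s_i \in \Gm\}$ (acting by scalars that multiply to $1$), so Corollary~\ref{Lt.norm} applies. Then Lemma~\ref{Lt.id} identifies the elements of $N_{\GL(V)}(G)^\circ(\k)$ preserving $f$ with the $\rt(g_1, g_2, g_3)$ for which $\chi(g_1, g_2, g_3) = 1$, where $f(\rt(g)v) = \chi(g) f(v)$. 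Since the hyperdeterminant is quartic and multiplying one tensor factor by $g_i$ scales $f$ by $(\det g_i)^2$, one computes $\chi(g_1, g_2, g_3) = (\det g_1 \det g_2 \det g_3)^2$, giving the condition $\det(g_1 g_2 g_3) = \pm 1$. This is a short explicit calculation, most cleanly done by evaluating on a single tensor on which $f$ does not vanish.

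For the component group, I would use Proposition~\ref{AutD}: the non-identity components of $N_{\GL(V)}(G)$ are indexed by $\Aut(\D, \la)$, and here $\D$ is a disjoint union of three $A_1$'s so $\Aut(\D) \cong \mathcal{S}_3$, and $\la$ (a sum of the three fundamental weights) is fixed by all of $\Aut(\D)$, hence $\Aut(\D, \la) = \mathcal{S}_3$. I then need to exhibit, for each $\s \in \mathcal{S}_3$, the image $\gamma(\s) \in \GL(V)$: the coordinate permutation $\tau_\s : v_1 \ot v_2 \ot v_3 \mapsto v_{\s^{-1}(1)} \ot v_{\s^{-1}(2)} \ot v_{\s^{-1}(3)}$ normalizes $G$ (it conjugates the three $\SL_2$-factors among themselves as dictated by $\s$), it permutes the root subgroups as $\s$ acts on $\D$, and it fixes the highest weight vector $e_1 \ot e_1 \ot e_1$; therefore $\gamma(\s) = \tau_\s$ (up to the freedom already absorbed into $\Lt$). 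Finally, by Lemma~\ref{AutD.f} there is a character $\phi : \Aut(\D, \la) \ra \Gm$ with $f(\tau_\s v) = \phi(\s) f(v)$; since the hyperdeterminant is visibly symmetric under permuting the three factors, $\phi \equiv 1$, so every $\tau_\s$ already preserves $f$. Assembling these pieces: the preserver of $f$ is generated by the $\rt(g_1, g_2, g_3)$ with $\det(g_1 g_2 g_3) = \pm 1$ and the permutations $\tau_\s$, which is exactly the asserted description.

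The only genuine content beyond bookkeeping is verifying $\gamma(\s) = \tau_\s$ — i.e., that the coordinate-permutation map really normalizes $G$ and realizes the diagram automorphism $\s$ — and checking the two explicit scalings $\chi(g) = \det(g_1 g_2 g_3)^2$ and $\phi \equiv 1$; I expect the normalization claim to be the main (if still routine) obstacle, and it is handled just as in the proof of Corollary~\ref{skew.rk1}: check that conjugation by $\tau_\s$ preserves the maximal torus and permutes the root subgroups of $G$ as prescribed by the action of $\s$ on $\D$, using the standard description of those root subgroups. One caveat worth a remark: as with the Hodge star in Corollaries~\ref{skew.rk1} and~\ref{SL6}, the lift $\tau_\s$ is only well defined up to composition with an element of the identity component, which is why the statement is phrased as ``the composition of such a map with a permutation.''
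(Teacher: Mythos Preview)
Your proposal is correct and follows essentially the same approach as the paper's proof: take $\Lt = \GL_2 \times \GL_2 \times \GL_2$ with the obvious $\rt$ (whose kernel is a rank-2 split torus), invoke Proposition~\ref{FTS.prop} together with the machinery of \S\ref{LPP.soln} to reduce to computing $\chi$ on $\Lt$ and checking that the permutations $\tau_\s$ preserve $f$. The paper dispatches the latter by appeal to the explicit formula for the hyperdeterminant, which is exactly your $\phi \equiv 1$ verification; your treatment is simply more explicit about the intermediate steps (identifying $\Aut(\D,\la) = \mathcal{S}_3$ and exhibiting $\gamma(\s) = \tau_\s$).
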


\begin{proof}
In the notation of \eqref{Lt.hyp}, one takes $\Lt$ to be a product of 3 copies of $\GL_2$ with the obvious $\rt$; the kernel of $\rt$ is isomorphic to $\Gm \times \Gm$.
In view of Proposition \ref{RRS.prop} and \S\ref{LPP.soln}, it suffices to check that the permutations preserve the hyperdeterminant, which is clear from the explicit formula for the hyperdeterminant from, e.g., \cite{MiyakeWadati}.
\end{proof}

For the representations on line 11 with $n \ge 5$, we define $\GO_n$ to be the algebraic group with $R$-points the matrices $g \in \GL_n(R)$ such that $g^t S g = \mu(g) S$ for some $\mu(g) \in R^\times$ (for every $\k$-algebra $R$); it is a reductive envelope of $\mathrm{O}_n$.

\begin{cor} \label{blackholes}
For $n \ge 5$, every linear transformation of $M_{2n}(\k)$ that preserves the degree $4$ function from Example \ref{blackholes.f} is of the form
\[
X \mapsto g_1 X g_2^t \quad \text{for $g_1 \in \GL_2(\k)$, $g_2 \in \GO_n(\k)$ with $\det(g_1)\mu(g_2) = \pm 1$.} 
\]
\end{cor}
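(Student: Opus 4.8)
The plan is to run the same machine used for Corollaries \ref{Sp6} through \ref{E7}, instantiating the general results of \S\ref{FTS.sec} and \S\ref{norm.sec} for the specific pair $(\Gt, V) = (\SL_2 \times \SO_n, \k^2 \ot \k^n)$ with $n \ge 5$. First I would invoke Proposition \ref{FTS.prop}: every linear transformation of $V = M_{2 \times n}(\k)$ that preserves the quartic $f$ of Example \ref{blackholes.f} lies in $N_{\GL(V)}(G)(\k)$. So the whole problem reduces to (a) computing $N_{\GL(V)}(G)(\k)$ via Corollary \ref{Lt.norm} or Corollary \ref{norm.gen}, and (b) intersecting with $\Stab_{\GL(V)}(f)$ via Lemmas \ref{Lt.id} and \ref{AutD.f}.

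For step (a) I would verify hypothesis \eqref{Lt.hyp} by taking $\Lt := \GL_2 \times \GO_n$ with $\rt(g_1, g_2)X = g_1 X g_2^t$. Here $\GO_n$ as defined just before the statement is a reductive envelope of $\mathrm{O}_n$, and since $n \ge 5$ the semisimple part of $\GO_n$ is $\Spin_n$ up to isogeny, so the semisimple part of $\Lt$ is (isogenous to) $\SL_2 \times \SO_n = \Gt$, and $\im \rt \supseteq \Gm$ (scalar matrices arise from $g_1$ scalar, $g_2 = 1$). The kernel of $\rt$ consists of pairs $(g_1, g_2)$ with $g_1 X g_2^t = X$ for all $X$, forcing $g_1 = sI_2$, $g_2^t = s^{-1}I_n$, so $\ker \rt \cong \Gm$ is a split torus; hypothesis \eqref{Lt.hyp} holds. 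Then Corollary \ref{Lt.norm} gives $N_{\GL(V)}(G)(\k) = \langle \rt(\Lt(\k)), \gamma(\Aut(\D, \la)) \rangle$. Since the Dynkin diagram of $\Gt$ is (type $A_1$) $\sqcup$ (type $B_{(n-1)/2}$ or $D_{n/2}$) and $\la$ is the highest weight of $\k^2 \ot \k^n$, I claim $\Aut(\D, \la) = 1$: the $A_1$ factor has no diagram automorphisms, a $B_\ell$ factor has none, and for a $D_\ell$ factor ($n$ even, $\ell = n/2 \ge 3$ since $n \ge 6$; the case $n = 5$ is type $B_2$) the nontrivial diagram automorphism swaps the two half-spin fundamental weights $\omega_{\ell-1}, \omega_\ell$ but $\la$ restricted to this factor is the vector-representation weight $\omega_1$, which is fixed, so in fact the diagram automorphism of $D_\ell$ also fixes $\la$ — wait, that would give $\Aut(\D,\la) = \Zm2$, which is wrong. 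The resolution is that the $D_\ell$ diagram automorphism fixes $\omega_1$ but acts nontrivially on the diagram, hence is a genuine outer automorphism; however $\mathrm{O}_n$ already realizes it as an inner-in-$\GO_n$ automorphism (reflection in a non-isotropic vector has determinant $-1$), so it is already accounted for inside $\rt(\Lt(\k))$ and contributes nothing new to the generated group. I would phrase this cleanly by noting $\rt(\GO_n(\k))$ surjects onto $N/N^\circ$ on that factor, so $\gamma(\Aut(\D,\la))$ adds nothing, giving $N_{\GL(V)}(G)(\k) = \rt(\Lt(\k))$.

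For step (b), Lemma \ref{Lt.id} says the elements of $N_{\GL(V)}(G)^\circ(\k)$ preserving $f$ are exactly $\rt(g)$ with $\chi(g) = 1$, where $\chi : \Lt \to \Gm$ is defined by $f(\rt(g)v) = \chi(g) f(v)$. Computing $\chi$ explicitly: with $f(X) = \det(X S X^t)$ (a $2\times 2$ determinant, hence degree $4$ in $X$), we get $f(g_1 X g_2^t S g_2 X^t g_1^t) = \det(g_1) \det(X (g_2^t S g_2) X^t g_1^t)$; using $g_2^t S g_2 = \mu(g_2) S$ this is $\det(g_1)^2 \mu(g_2)^2 \det(X S X^t)$, so $\chi(g_1, g_2) = (\det g_1 \cdot \mu(g_2))^2$. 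Hence $\chi(g) = 1$ iff $\det(g_1)\mu(g_2) = \pm 1$, which is exactly the constraint in the statement. Combining with the fact that every preserver is in $\rt(\Lt(\k))$ (by step (a), since $\Aut(\D,\la)$ contributes nothing new beyond $\rt$), this yields the corollary.

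I expect the main obstacle to be the bookkeeping around $\Aut(\D, \la)$ and the interaction between the outer automorphism of the $D_{n/2}$ factor (when $n$ is even and $\ge 6$) and the group $\GO_n$ versus $\mathrm{GSO}_n$: one must be careful that the ``extra'' element of $\GO_n$ with $\det = -1$ realizes the relevant symmetry and that no further Hodge-star-like map (as appeared in Corollaries \ref{skew.rk1} and \ref{SL6}) is needed. Once it is confirmed that $\rt(\Lt(\k))$ already exhausts $N_{\GL(V)}(G)(\k)$, the rest is the routine $\chi$-computation above; since the paper explicitly says ``We omit the details in the proofs of this corollary and the following items because they are entirely similar to earlier proofs,'' a short proof citing Proposition \ref{FTS.prop}, Corollary \ref{Lt.norm}, and Lemma \ref{Lt.id}, together with the one-line computation of $\chi$, suffices.
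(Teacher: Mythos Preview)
Your proposal is correct and follows essentially the same route as the paper: invoke Proposition \ref{FTS.prop}, describe $N_{\GL(V)}(G)(\k)$ via the reductive envelope $\GL_2 \times \GO_n$, and compute $\chi(g_1,g_2) = (\det g_1 \cdot \mu(g_2))^2$. The paper's one-line sketch ``$\Aut(\D,\la)$ is naturally identified with the component group of $\GO_n$'' is exactly the point you reach after your back-and-forth about the $D_{n/2}$ diagram automorphism fixing $\omega_1$.

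One small cleanup: hypothesis \eqref{Lt.hyp} asks for $\Lt$ \emph{connected}, but $\GO_n$ is disconnected for $n$ even. The tidy way to phrase your argument is to take $\Lt = \GL_2 \times \mathrm{GSO}_n$ (connected) so that Corollary \ref{Lt.norm} literally applies to give $N_{\GL(V)}(G)^\circ(\k) = \rt(\Lt(\k))$, and then observe---as both you and the paper do---that $\Aut(\D,\la)$ is trivial for $n$ odd (type $B$) and $\Zm2$ for $n$ even (the $D_{n/2}$ automorphism fixing $\omega_1$, including the $D_4$ case where the stabilizer of $\omega_1$ in $S_3$ is $\Zm2$), and that in the latter case $\gamma(\Aut(\D,\la))$ is realized by any reflection in $\mathrm{O}_n \setminus \SO_n$, hence already lies in $\rt((\GL_2 \times \GO_n)(\k))$. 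Your final sentence shows you understood this; just don't call $\GL_2 \times \GO_n$ itself the $\Lt$ of \eqref{Lt.hyp}.
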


\begin{proof}[Sketch of proof]
Note that $\Aut(\D, \la)$ is naturally identified with the component group of $\GO_n$.
\end{proof}

As a concrete illustration of the remarks in \S\ref{nonsplit.sec}, we note that Corollary \ref{blackholes} and Example \ref{blackholes.f} go through with no change if we replace the split groups $\SO_n$ and $\mathrm{O}_n$ with the special orthogonal and orthogonal groups of any nondegenerate symmetric bilinear form, i.e., where the matrix $S$ in Example \ref{blackholes.f} is any symmetric invertible matrix.  In this way, the corollary gives the stabilizer of $f$ also in the case where $\k = \R$ and $\SO_n$ is replaced by a real group  $\SO(2,n-2)$ or $\SO(6,n-6)$; this situation appears in the study of electromagnetic black hole charges in $\mathcal{N} = 2$ or $4$ supergravity, see e.g.~\cite{BDFMR:small}.

\section{Some representations omitted from Table \ref{1dim}} \label{ignore}

We have not yet discussed all pairs $(G, V)$ where $G$ is absolutely almost simple, $V$ is an irreducible representation of $G$ and $\k[V]^G$ is generated by a homogeneous polynomial $f$.  For $\k = \C$, all such pairs are listed in the the table on pages 260--262 of \cite{PoV}, and we now discuss each of the cases that we have thus far omitted.  

\smallskip
Consider one of the groups $\HSpin_n$ for $n = 7, 9$ with their natural representations or $G_2$ with its 7-dimensional representation.  In these cases, $f$ has degree 2, i.e., is a quadratic form, so its linear preserver is the orthogonal group $\mathrm{O}(f)$.  Note that our Theorem \ref{rk1} does not apply to these groups because they correspond to exclusions \eqref{Dem.HSpin} and \eqref{Dem.G2} in Definition \ref{dem.def}.

The natural 32-dimensional representation of the group $\HSpin_{11}$ factors through the natural representation of $\HSpin_{12}$.  The ring $\k[V]^{\HSpin_{12}}$ is also 1-dimensional (as can be seen by the reasoning in \S\ref{LPP.1}, where $H$ has type $E_7$) with generator $f$ of degree 4, so clearly the $f$ stabilized by $\HSpin_{11}$ is the same as for $\HSpin_{12}$ and so the linear preserver of this $f$ is $\HSpin_{12}.\mu_4$ as in Corollary \ref{HSpin12}.  (For generalizations of this sort of example, see \cite{Solomon:irredsame}.)

\smallskip
The only remaining pairs $(G, V)$ are $(\SL_7, \wedge^3 (\k^7))$, $(\SL_8, \wedge^3(\k^8))$, and $\HSpin_{14}$ with its natural $120$-dimensional representation.  The first representation is noteworthy, because the stabilizer in $\SL_7$ of any element $v$ such that $f(v) \ne 0$ is a group of type $G_2$, see for example \cite[p.~65]{Engel}, \cite{Asch:G2},  
or \cite{CohenHel}.  The orbits in the last representation have been studied over $\C$ in \cite{Popov:14}, and the fact that $\dim V/G = 1$ has been applied to the theory of quadratic forms in \cite{Rost:14.1} and \cite{Rost:14.2}, see also \cite{G:lens}. All three of these representations are irreducible and are not stable under an outer automorphism of $G$, so applying Theorem \ref{rk1}, we find without doing any work that $\Stab_{\GL(V)}(\O)$ is $\Gm.G$.  As to the preserver $\Stab_{\GL(V)}(f)$ in these cases, we omit serious investigation.  However, for $\k = \C$, one can observe that the identity component $G'$ of $\Stab_{\GL(V)}(f)$ is reductive (because $V$ is an irreducible representation), hence is semisimple (because the center must consist of scalar matrices).  It follows from the classification of semisimple groups $G'$ such that $\C[V]^{G'}$ is generated by a single polynomial that $G' = G$.  In particular, $G$ is normal in $\Stab_{\GL(V)}(f)$.  As $\Aut(\D, \la) = 1$, it follows that $\Stab_{\GL(V)}(f)$ is contained in $G.\Gm$, i.e., $\Stab_{\GL(V)}(f)$ is $G.\mu_d$, where $d$ is the degree of $f$ (equal to 7, 16, or 8 respectively).

\section{An alternative formulation of the linear preserver problem} \label{LPP.2}

Inspecting the LPP solutions by Frobenius (1897) and Dieudonn\'e (1949) where $V$ is the $n$-by-$n$ matrices and $f$ is the determinant, one sees that Frobenius determines the preserver of $\det$ whereas Dieudonn\'e determines the linear transformations on $V$ that preserve the set of singular matrices.  
So far, we have been solving Frobenius' version of the problem, but in fact we have also solved Dieudonn\'e's version:

\begin{prop} \label{dieu.prop}
For each of the representations in Table \ref{1dim}, the collection of linear transformations preserving the projective variety $f = 0$ is $N_{\GL(V)}(G)$.
\end{prop}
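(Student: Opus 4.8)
The plan is to reduce Proposition~\ref{dieu.prop} to the already-established computation of $\Stab_{\GL(V)}(f)$ (from Propositions~\ref{RRS.prop}, \ref{FTS.prop}, the examples and corollaries of \S\S\ref{RRS.sec}--\ref{FTS.sec}) by showing that the ``Dieudonn\'e'' stabilizer $\Stab_{\GL(V)}(\{f=0\})$ is squeezed between $\Stab_{\GL(V)}(f)$ and $N_{\GL(V)}(G)$, and that both bounds already coincide with $N_{\GL(V)}(G)$. Concretely, we have
\[
\Stab_{\GL(V)}(f) \;\subseteq\; \Stab_{\GL(V)}(\{f=0\})
\]
trivially, since a transformation fixing $f$ on the nose certainly fixes its zero locus. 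So the entire content is the reverse inclusion $\Stab_{\GL(V)}(\{f=0\}) \subseteq N_{\GL(V)}(G)$, after which Example~\ref{norm.eg} (giving $N_{\GL(V)}(G) \subseteq \Stab_{\GL(V)}(\O)$) combined with the fact, proved case-by-case in \S\S\ref{RRS.sec}--\ref{FTS.sec}, that $\Stab_{\GL(V)}(f) = N_{\GL(V)}(G)$, closes the circle: $N_{\GL(V)}(G) = \Stab_{\GL(V)}(f) \subseteq \Stab_{\GL(V)}(\{f=0\}) \subseteq N_{\GL(V)}(G)$.

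First I would observe that the hypersurface $\{f = 0\} \subseteq \P(V)$ contains the closed orbit $\O$: indeed $f$ vanishes on the highest weight vector $\vp$ (the highest weight line is in the closed orbit, and in each of our cases $f(\vp) = 0$ --- this is immediate from the explicit formulas, or more structurally because $\vp$ is minimal and minimals lie in $\{f=0\}$ by the analysis of Lemmas~\ref{RRS} and \ref{radlem}), and since $f$ is $G$-invariant it then vanishes on all of $G\cdot\k\vp = \O$. So the key geometric claim is that $\O$ is intrinsically recoverable from the variety $\{f = 0\}$ --- more precisely, that $\{f=0\}$ is an irreducible hypersurface whose singular locus (or some canonically defined closed subvariety of it) is exactly $\O$, forcing any linear transformation preserving $\{f=0\}$ to preserve $\O$, hence to lie in $\Stab_{\GL(V)}(\O) = N_{\GL(V)}(G)$ by Theorem~\ref{rk1}. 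For this I would invoke the identification of minimal elements with the singular points of the hypersurface: for lines 1--5 this is classical (e.g.\ the singular locus of $\{\det = 0\}$ on symmetric, alternating, or square matrices is the locus of corank $\ge 2$, whose projectivization of the rank-exactly-next-value piece is $\O$; one iterates, or passes directly to the lowest-rank stratum), and for lines 6--11 one uses that the minimal elements are characterized by the rank drop of the bilinear form $b_x$ (Lemma~\ref{radlem}), a condition detectable from $f$ alone.

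The cleaner uniform route, which I expect to be the actual argument, is: a linear transformation $\phi$ preserving $\{f = 0\}$ must send $f$ to a scalar multiple of $f$ (since $\{f=0\}$ is irreducible --- $f$ is irreducible because $\k[V]^G = \k[f]$ is a polynomial ring and $f$ is a generator, so $f$ is prime --- and $\{f\circ\phi = 0\} = \{f=0\}$ forces $f \circ \phi = c\, f$ for some $c \in \kx$, using that $V$ is irreducible so there are no $\phi$-fixed functionals to worry about and both sides are homogeneous of the same degree). Thus $\Stab_{\GL(V)}(\{f=0\}) = \{\phi : f\circ\phi \in \kx f\}$. This group preserves the zero locus of $f$ and all its ``level sets up to scaling,'' and in particular, applying Lemma~\ref{RRS} (for lines 1--5) or the radical-dimension characterization via $b_x$ (for lines 6--11) --- both of which are phrased purely in terms of degrees/ranks of $f$-derived data and are therefore invariant under replacing $f$ by $cf$ --- we conclude $\phi$ preserves minimal elements of $V\otimes\kalg$, i.e.\ $\phi \in \Stab_{\GL(V)}(\O) = N_{\GL(V)}(G)$. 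The main obstacle is the bookkeeping that every characterization of minimals used in \S\S\ref{RRS.sec}--\ref{FTS.sec} is genuinely insensitive to rescaling $f$ (it is, since $\deg f(tv+v')$ and $\dim\rad b_x$ don't change when $f \mapsto cf$) and that $f$ is prime in each case (it is, being a generator of the polynomial ring $\k[V]^G$); both are routine but must be stated. Then Propositions~\ref{RRS.prop} and \ref{FTS.prop} together with the corollaries identifying $\Stab_{\GL(V)}(f)$ give the equality $\Stab_{\GL(V)}(\{f=0\}) = N_{\GL(V)}(G)$, completing the proof, and I would remark that this simultaneously recovers Dieudonn\'e's theorem on singular matrices as the line-4 instance.
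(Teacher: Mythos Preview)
Your forward direction—showing $\Stab_{\GL(V)}(\{f=0\}) \subseteq N_{\GL(V)}(G)$—is correct and close in spirit to the paper's argument. The paper short-circuits your scale-invariance check: once ${}^sf = cf$ over $\kalg$, it simply replaces $s$ by $c^{-1/\deg f}s$ (legitimate over $\kalg$), obtaining an honest element of $\Stab_{\GL(V)}(f)$, and then invokes Propositions~\ref{RRS.prop} and \ref{FTS.prop} directly; since scalars lie in $N_{\GL(V)}(G)$, the conclusion for $s$ follows. Your route—verifying that the characterizations of minimals in Lemmas~\ref{RRS} and \ref{radlem} are unchanged under $f\mapsto cf$—also works, but is slightly more to write out.

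The reverse inclusion, however, has a genuine gap. Your sandwich relies on the equality $\Stab_{\GL(V)}(f) = N_{\GL(V)}(G)$, and that equality is \emph{false}: the normalizer contains all of $\Gm$, whereas among the scalars $\Stab_{\GL(V)}(f)$ contains only $\mu_{\deg f}$ (Lemma~\ref{aclosed.id}; concretely, note the constraint $r^n\det(P)^2=1$ in Corollary~\ref{symm.f}, or $c^4(\det g)^2=1$ in Corollary~\ref{SL6}). Propositions~\ref{RRS.prop} and \ref{FTS.prop} give only the containment $\Stab_{\GL(V)}(f)\subseteq N_{\GL(V)}(G)$, not equality. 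So you have not established $N_{\GL(V)}(G)\subseteq\Stab_{\GL(V)}(\{f=0\})$. The fix is the one the paper uses: for $n\in N_{\GL(V)}(G)$, the polynomial ${}^nf$ is $G$-invariant (since $n$ normalizes $G$) and homogeneous of degree $\deg f$, hence a scalar multiple of $f$ because $\k[V]^G=\k[f]$; equivalently, combine Corollary~\ref{norm.gen} with Lemma~\ref{AutD.f}. That immediately gives that $n$ preserves $\{f=0\}$.
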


See e.g.\ \cite{Schwarz:lmpf} for general results on the relationship between the two versions.

\begin{proof}
Put $S$ for the sub-group-scheme of $\GL(V)$ preserving the projective variety $f = 0$.  
Given any $s \in S(\kalg)$, ${^s}f$ is in the ideal generated by $f$ and has the same degree as $f$, hence ${^s}f = cf$ for some $c \in \kx$ and $c^{-1/\deg f} s$ preserves $f$.  Propositions \ref{RRS.prop} and \ref{FTS.prop} give that $s$ belongs to $N_{\GL(V)}(G)$.

Conversely, for $n \in \GL(V)$ normalizing $G$, Corollary \ref{norm.gen} and Lemma \ref{AutD.f} show that $^n f$ is a scalar multiple of $f$.
\end{proof}

Note that the proposition  indeed solves Dieudonn\'e's version of the linear preserver problem for the representations in Table \ref{1dim}, because
we calculated the group $N_{\GL(V)}(G)$ in \S\ref{norm.sec}.

\bibliographystyle{amsalpha}
\bibliography{skip_master}

\end{document}